\documentclass[12pt,a4paper,oneside,reqno,notitlepage]{amsart}

\usepackage{MnSymbol}
\usepackage{mathrsfs}
\usepackage[utf8]{inputenc}
\usepackage{amsmath,amsthm}

\usepackage{tikz-cd}

\newcommand{\ilimit}{\mbox{$\,\displaystyle{\lim_{\longleftarrow}}\,$}}

\topmargin=-.5cm\textheight=23.cm\textwidth=16.cm
\oddsidemargin=-0.25cm \evensidemargin=-0.25cm
\usepackage{amsfonts}
\usepackage{amscd,amsmath}
\usepackage{euscript}

\newtheorem{Theorem}{Theorem}[section]
\newtheorem{Theorem-Definition}{Theorem-Definition}[section]
\newtheorem{Corollary}[Theorem]{Corollary}
\newtheorem{Lemma}[Theorem]{Lemma}
\newtheorem{Proposition}[Theorem]{Proposition}
\theoremstyle{definition}
\newtheorem{Remark}[Theorem]{Remark}
\newtheorem{Example}[Theorem]{Example}
\newtheorem{Definition}[Theorem]{Definition}

\usepackage[arrow,matrix,curve]{xy}\SilentMatrices
\def\xyma{\xymatrix@M.7em}

\numberwithin{equation}{section}

\DeclareMathAlphabet\mathbb{U}{msb}{m}{n}
\newcommand{\mono}{\rightarrowtail}
\newcommand{\epi}{\twoheadrightarrow}

\sloppy

\begin{document}

\title{On lengths of $H\mathbb Z$-localization towers}
\author{Sergei O. Ivanov}
\address{Chebyshev Laboratory, St. Petersburg State University, 14th Line, 29b,
Saint Petersburg, 199178 Russia} \email{ivanov.s.o.1986@gmail.com}

\author{Roman Mikhailov}
\address{Chebyshev Laboratory, St. Petersburg State University, 14th Line, 29b,
Saint Petersburg, 199178 Russia and St. Petersburg Department of
Steklov Mathematical Institute} \email{rmikhailov@mail.ru}

\begin{abstract}
In this paper, the $H\mathbb Z$-length of different groups is
studied. By definition, this is the length of $H\mathbb
Z$-localization tower or the length of transfinite lower central
series of $H\mathbb Z$-localization. It is proved that, for a free
noncyclic group, its $H\mathbb Z$-length is
$\geq \omega+2$. For a large class of $\mathbb Z[C]$-modules $M,$ where $C$ is an infinite cyclic group, it is proved that the $H\mathbb Z$-length of the semi-direct product $M\rtimes
C$ is $\leq \omega+1$ and its $H\mathbb Z$-localization can be
described as a central extension of its pro-nilpotent completion.
In particular, this class covers modules $M$, such that $M\rtimes
C$ is finitely presented and $H_2(M\rtimes C)$ is finite.
\end{abstract}
\maketitle
\par\vspace{.25cm}\noindent MSC2010: 55P60, 19C09, 20J06\par\vspace{1cm}
\section{Introduction}
Let $R$ be either a subgring of rationals or a cyclic ring. In his
fundamental work \cite{Bousfield}, A.K. Bousfield introduced the
concept of $HR$-localization. This is a functor in the category of
groups, closely related to the functor of homological localization
of spaces. In this paper we will study the case $R=\mathbb Z$,
that is, $H\mathbb Z$-localization for different groups.

An $H\mathbb Z$-map between two groups is a homomorphism which
induces an isomorphism on $H_1$ and an epimorphism on $H_2$. A
group $\Gamma$ is $H\mathbb Z$-local if any $H\mathbb Z$-map $G\to
H$ induces a bijection ${\sf Mor}(H,\Gamma)\simeq {\sf
Mor}(G,\Gamma)$. Recall that (\cite{Bousfield}, Theorem 3.10) the
class of $H\mathbb Z$-local groups is the smallest class which
contains the trivial group and closed under inverse limits and
central extensions. Given a group $G$, the $H\mathbb
Z$-localization
$$
\eta: G\to EG
$$
can be uniquely characterized by the following two properties:
$\eta$ is an $H\mathbb Z$-map and the group $EG$ is $H\mathbb
Z$-local. These two properties are given as a definition of
$H\mathbb Z$-localization in \cite{Bousfield}. It is shown in
\cite{Bousfield} that, for any $G$, the $H\mathbb Z$-localization
$EG$ exists, unique and transfiniltely nilpotent.

For a group $G$, denote by $\{\gamma_\tau(G)\}$ the transfinite
lower central series of $G$, defined inductively as
$\gamma_{\tau+1}(G):=[\gamma_\tau(G),G]$ and
$\gamma_\alpha=\bigcap_{\tau<\alpha}\gamma_\tau(G)$ for a limit
ordinal $\alpha$. For a $G$, we will call the length of
transfinite lower series of $EG$, i.e. the least ordinal $\tau,$
such that $\gamma_\tau(EG)=1$ by {\it $H\mathbb Z$-length} of $G$
and denote it as $H\mathbb Z\text{-}{\sf length}(G).$

Let $C$ be an infinite cyclic group. A $\mathbb Z[C]$-module $M$
is {\it tame} if and only if $M\rtimes C$ is a finitely presented
group \cite{Bieri-Strebel_81}.
 If $M$ is a tame $C$-module, then ${\sf dim}_{\mathbb Q}(M\otimes \mathbb Q)<\infty$ and there exist a generator $t\in C$ such that the minimal
 polynomial of the linear map
 $t\otimes \mathbb Q$ $:$ $M\otimes \mathbb Q$ $\to$ $M\otimes \mathbb Q$
  is an integral monic polynomial, which is denoted by $\mu_M\in \mathbb Z[x]$
 (see \cite[Theorem C]{Bieri-Strebel_78}
 and Lemma \ref{Lemma_minimal_polynomial}).
We prove the following

\

\noindent{\bf Theorem.} {\it Let $G$ be a metabelian group of the
form $G=M\rtimes C,$ where $M$ is a tame $\mathbb Z[C]$-module and
$\mu_M=(x-\lambda_1)^{m_1}\dots (x-\lambda_l)^{m_l}$ for some
distinct complex numbers $\lambda_1,\dots,\lambda_l$ and $m_i\geq
1.$
\begin{enumerate}
\item Assume that the equality $\lambda_i\lambda_j=1$ holds only
if  $\lambda_i=\lambda_j=1.$ Then $$H\mathbb Z\text{-}{\sf
length}(G)\leq \omega.$$
\item Assume that the equality
$\lambda_i\lambda_j=1$ holds only if either $m_i=m_j=1$ or
$\lambda_i=\lambda_j=1.$ Then $$H\mathbb Z\text{-}{\sf
length}(G)\leq \omega+1.$$
\end{enumerate}
}

\

\noindent As a contrast, we give an example of a finitely
presented metabelian group of the form $M\rtimes C,$ where $M$ is
tame, whose $H\mathbb Z$-length is greater than $\omega+1$. In the
following example, the $\mathbb Z[C]$-module $M$ is tame but
 it does not satisfy the condition of Theorem \ref{Theorem_omega+1}. Let
\begin{equation}\label{rootgroup}
G=\langle a,b,t\ |\ a^t=a^{-1}, b^t=ab^{-1}, [a,b]=1\rangle=\mathbb Z^2 \rtimes C,
\end{equation}
where $C$ acts on $\mathbb Z^2$ by the matrix $\left(\begin{smallmatrix} -1& \ 1 \\ \ 0 & -1 \end{smallmatrix}\right)$.
It is shown in Theorem \ref{theorem6} that the $H\mathbb Z$-length of
$G$ is  $\geq \omega+2$.

Let $M$ be a tame $\mathbb Z[C]$-module and $\mu_M=(x-1)^{m}f,$
for some $m\geq 0$ and $f\in \mathbb Z[x]$ such that $f(1)\ne 0.$
Assume that $f=f_1^{m_1}\dots f_l^{m_l}$ where $f_1,\dots,f_l\in
\mathbb Z[x]$ are distinct irreducible monic polynomials.
 If $f(1)\in \{-1,1\}$, then
$H\mathbb Z\text{-}{\sf length}(M\rtimes C)<\omega.$ (Corollary \ref{perfectm}).

\vspace{.5cm}\noindent{\bf Conjecture.} {\it If $f(1)\notin \{-1,1\},$
then $H\mathbb Z\text{-}{\sf length}(M\rtimes C)\leq\omega+n,$
where
$$n={\rm max}(\{ m_i \mid f_i(0)\in \{ -1,1\} \  \wedge \ f_i(1)\notin \{-1,1\} \}\cup \{0\}).$$
In particular, for any tame $\mathbb Z[C]$-module $M$, $H\mathbb
Z\text{-}{\sf length}(M\rtimes C)<2\omega.$ }

\

It is easy to check that the above theorem together with Corollary \ref{perfectm} and Proposition \ref{Proposition_structure_of_tame_modules} imply the conjecture for $n=0,1.$

For a group $G$, denote by $\hat G$ its pro-nilpotent completion:
$$
\hat G:=\ilimit_n G/\gamma_n(G).
$$
For a finitely generated group $G$, there is a natural isomorphism
(Prop. 3.14 \cite{Bousfield})
$$
EG/\gamma_\omega(EG)=\hat G.
$$
Therefore, for finitely generated groups, $H\mathbb
Z$-localization gives a natural extension of the pro-nilpotent
completion.

The pro-nilpotent completion of a finitely generated group $G$ is
always $H\mathbb Z$-local and the map $G\to \hat G$ induces an
isomorphism on $H_1$. Therefore, for such a group, the following
conditions are equivalent:\\ 1) the natural epimorphism
$EG\twoheadrightarrow \hat G$ is an isomorphism;\\ 2)
$H\mathbb Z$-length of $G$ $\leq \omega$;\\
3) The natural map $H_2(G)\to H_2(\hat G)$ is an epimorphism.

A simple example of a group with $H\mathbb Z$-length $\omega$ is
the following. Let
$$
G=\langle a, t\ |\ a^t=a^3\rangle=\mathbb Z[1/3] \rtimes C
.$$ Here $C$ acts on $\mathbb Z[1/3]$ as the multiplication by $3.$
Then the pro-nilpotent completion has the structure $\hat
G=\mathbb Z_2\rtimes C$, where the cyclic group $C=\langle
t\rangle$ acts on 2-adic integers as the multiplication by $3$.
Looking at the homology spectral sequence for an extension $1\to
\mathbb Z_2\to \hat G\to C\to 1$, we obtain $H_2(\hat
G)=\Lambda^2(\mathbb Z_2)\otimes \mathbb Z/9=0.$ Therefore,
$EG=\hat G$. Since the group $G$ is not pre-nilpotent, $H\mathbb
Z\text{-}{\sf length}(G)=\omega$.

The above example is an exception. In most cases, the description
of $H\mathbb Z$-localization as well as the computation of
$H\mathbb Z$-length for a given group is a difficult problem. It
is shown in \cite{Bousfield} that $H\mathbb Z$-length of the Klein
bottle group $G_{Kl}:=\langle a,t\ |\ a^t=a^{-1}\rangle = \mathbb Z \rtimes C$ is
greater than $\omega$. As a corollary, it is concluded in
\cite{Bousfield} that $H\mathbb Z$-length of any non-cyclic free
group also is greater than $\omega$. Our Theorem
\ref{Theorem_omega+1} implies that $H\mathbb Z\text{-}{\sf
length}(G_{Kl})=\omega+1$ and that the $H\mathbb Z$-localization
$EG_{Kl}$ lives in the central extension
$$
1\to \Lambda^2(\mathbb Z_2)\to EG_{Kl}\to \mathbb Z_2\rtimes
\langle t\rangle\to 1,
$$
where the action of $t$ on 2-adic integers is negation. Moreover, we give a more explicit description of $EG_{Kl}$ in
Proposition \ref{Proposition_localisation_of_Klein_bottle}.  The
$H\mathbb Z$-length of a free non-cyclic group remains a mystery
for us, however, we prove the following

\

\noindent{\bf Theorem.} {\it  Let $F$ be a free group of rank\:   $\geq 2$.
Then
$H\mathbb Z\text{-}{\sf length}(F)\geq \omega+2$.}

\

\noindent Briefly recall the scheme of the proof. Consider an
extension of the group (\ref{rootgroup}) given by presentation
\begin{equation}\label{rootgroup1}
\Gamma:=\langle a,b,t\ |\ [[a,b],a]=[[a,b],b]=1,\ a^t=a^{-1},\
b^t=ab^{-1}\rangle
\end{equation}
We follow the Bousfield scheme of comparison of pro-nilpotent
completions for a free group and the group $\Gamma$. Consider a
free simplicial resolution of $\Gamma$ with $F_0=F$. Group
$\Gamma$ has finite second homology, therefore, $\ilimit^1$ of its
Baer invariants is zero, therefore, $\pi_0$ of the pro-nilpotent
completion of the free simplicial resolution equals to the
pro-nilpotent completion of $\Gamma$. The group $\Gamma$ has
$H\mathbb Z$-length greater than $\omega+1$ and the result about
$H\mathbb Z$-length of $F$ follows from natural properties of the
$H\mathbb Z$-localization tower. Observe that the same method does
not work for the group (\ref{rootgroup}) (as well as for all
groups of the type $M\rtimes C$ for abelian $M$), since
$\ilimit^1$ of Baer invariants of $G$ is huge (this follows from
proposition \ref{h2pr} and an analysis of the tower of Baer
invariants for a metabelian group).

The paper is organized as follows. In section 2, we present the
theory of relative central extensions, which is a generalisation of the standard theory of central extensions. A non-limit step in the construction
of Bousfield's tower can be viewed as the universal relative
extension. Section 2 is technical and introductory, it may be
viewed just as a comment to the section 3 of \cite{Bousfield}. In
section 3 we recall the exact sequences in homology from
\cite{Eckmann}, \cite{Eckmann1} for central stem-extensions.
Observe that the universal relative extensions used for the
construction of $H\mathbb Z$-tower are stem-extensions.
Proposition \ref{alpha33} gives the main trick: for a cyclic group
$C$, $\mathbb Z[C]$-module $M$, and a central stem-extension
$N\hookrightarrow G\twoheadrightarrow M\rtimes C,$ the composite
map $H_3(M\rtimes C)\to (M\rtimes C)\otimes N\twoheadrightarrow N$
can be decomposed as $H_3(M\rtimes C)\to H_2(M)^C\to H_2(M)_C\to
N$. This trick gives a possibility to analyze the homology of the
$\omega+1$-st term of the $H\mathbb Z$-localization tower for
groups of the type $M\rtimes C$.

Using the properties of tame modules we show in section
\ref{tamemodules} that the question about the $H\mathbb Z$-length
of the group $M\rtimes C$ with a tame $\mathbb Z[C]$-module $M$,
can be reduced to the same question for the group $(M/N)\rtimes
C$, where $N$ is the largest nilpotent submodule of $M$. It is
shown in \cite{IvanovMikhailov} that, for a finitely presented
metabelian group $G$, the cokernel (denoted by
$H_2(\eta_\omega)(G)$) of the natural map $H_2(G)\to H_2(\hat G)$
is divisible. Using this property we conclude that, $H\mathbb
Z$-length of $M\rtimes C$ is not greater than $\omega+1$ if and
only if the composite map
$$
\Lambda^2(\hat M)^C\to \Lambda^2(\hat M)_C\to H_2(\eta_\omega)
$$
is an epimorphism (see proposition \ref{keyprop}). Theorem
\ref{Theorem_omega+1} is our main result of section
\ref{hzsection}. There is a simple condition on a tame $\mathbb
Z[C]$-module $M$ which implies that $H\mathbb Z\text{-}{\sf
length}(M\rtimes C)\leq \omega+1$. Theorem \ref{Theorem_omega+1}
provides a large class of groups for which one can describe
$H\mathbb Z$-localization explicitly. In particular, we show that,
if the homology $H_2(M\rtimes C)$ is finite, then the module $M$
satisfies the condition (ii) of Theorem \ref{Theorem_omega+1} and
therefore, for such $M$, $H\mathbb Z\text{-}{\sf length}(M\rtimes
C)\leq \omega+1.$

In section \ref{boussection} we recall the method of Bousfield
from \cite{Bousfield}, which gives a possibility to study the
second homology of the pro-nilpotent completion of a free group.
In section \ref{mainsection} we present our root examples
(\ref{rootgroup}) and (\ref{rootgroup1}) and prove that they have
$H\mathbb Z$-length greater than $\omega+1$. Following the scheme
described above, we get the same result for a free non-cyclic
group. In the last section of the paper we present an alternative
approach for proving that some groups have a long $H\mathbb
Z$-localization tower. Consider the wreath product
$$\mathbb Z\wr\mathbb Z=\langle a,b\ |\ [a,a^{b^i}]=1,\ i\in
\mathbb Z\rangle$$ Using functorial technique, we show in theorem
\ref{zwrz} that $H\mathbb Z\text{-}{\sf length}(\mathbb
Z\wr\mathbb Z)\geq \omega+2$. In the last section, as an
application of the theory developed in the paper, we give an
explicit construction of $EG_{Kl}$.

\section{Relative central extensions and $H\mathbb Z$-localization}

Throughout this section $G,H$ denote groups, $f:H\to G$ a
homomorphism and $A$ an abelian group.

\subsection{(Co)homology of a homomorphism}

Consider the continuous map between classifying spaces $Bf:BH\to
BG$ and its mapping cone ${\sf Cone}(Bf)$. Following Bousfield
\cite[2.14]{Bousfield}, we define homology and cohomology of $f$
with coefficients in $A$ as follows
\begin{align*} & H_n(f,A)=H_n({\sf Cone}(Bf),A),\\ & H^n(f,A)=H^n({\sf Cone}(Bf),A).\end{align*}
Then there are long exact
sequences
$$ \dots \to H_2(H,A) \to H_2(G,A) \to H_2(f,A) \to  H_1(H,A) \to  H_1(G,A) \to H_1(f,A)\to 0,$$
$$0\to H^1(f,A) \to H^1(G,A) \to H^1(H,A)\to H^2(f,A) \to H^2(G,A)\to H^2(H,A)\to \dots.$$
In particular, $H_1(f)={\rm Coker}\{H_{\sf ab}\to G_{\sf ab}\}$
and $H_1(f,A)=H_1(f)\otimes A.$

We denote by $\bar C^\bullet(G,A)$ the  complex of normalized
cochains of $G$ with coefficients in $A,$ \cite[6.5.5]{Weibel} by
$\partial^n:\bar C^n(G,A)\to \bar C^{n+1}(G,A)$ its differential
and by $\bar Z^n(G,A)$ and $\bar B^n(G,A)$ the groups of
normalized cocycles and coboundaries. For a homomorphism $f:H\to
G$ and an abelian group $A$ we denote by $\bar Z^n(f,A)$ and $\bar
B^n(f,A)$ the following subgroups of $\bar Z^n(G,A)\oplus  \bar
C^{n-1}(H,A)$ \begin{align*} & \bar Z^n(f,A)=\{ (c,\alpha) \mid
f^*c= -\partial\alpha \},\\ & \bar B^n(f,A)=\{ (-\partial\beta,
f^*\beta+\partial\gamma ) \mid \beta\in \bar C^{n-1}(G,A),
\gamma\in \bar C^{n-2}(H,A) \}.
\end{align*}
Since the map $\partial:\bar C^0(H,A)\to \bar C^1(H,A)$ is trivial, we have
$$\bar B^2(f,A)=\{ (-\partial\beta, \beta f) \mid \beta\in \bar C^{1}(G,A) \}.$$
\begin{Lemma} For $n\geq 1$ there is an isomorphism $H^n(f,A)\cong \bar Z^n(f,A)/\bar B^n(f,A).$
\end{Lemma}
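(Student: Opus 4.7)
The plan is to realise $H^n(f,A)$ as the cohomology of the algebraic mapping cone of the cochain map $f^{*}\colon \bar C^\bullet(G,A)\to \bar C^\bullet(H,A)$, and then read off the claimed description of cocycles and coboundaries directly from that cone.

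Concretely, I would introduce the complex $D^\bullet$ with
$$D^n=\bar C^n(G,A)\oplus \bar C^{n-1}(H,A), \qquad d(c,\alpha)=(\partial c,\,-f^{*}c-\partial\alpha).$$
A short calculation using the fact that $f^{*}$ commutes with $\partial$ yields $d^{2}=0$. Unpacking the cocycle condition in degree $n\ge 1$ produces precisely $\partial c=0$ together with $f^{*}c=-\partial\alpha$, i.e.\ a pair in $\bar Z^n(f,A)$. Any $(\beta',\gamma')\in \bar C^{n-1}(G,A)\oplus \bar C^{n-2}(H,A)$ has coboundary $(\partial\beta',\,-f^{*}\beta'-\partial\gamma')$, and the harmless substitution $\beta=-\beta'$, $\gamma=-\gamma'$ rewrites this as $(-\partial\beta,\,f^{*}\beta+\partial\gamma)$, matching $\bar B^n(f,A)$. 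Hence $H^n(D^\bullet)=\bar Z^n(f,A)/\bar B^n(f,A)$ by definition, for every $n\ge 1$.

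The remaining task is to identify $H^n(D^\bullet)$ with $H^n(f,A)=H^n({\sf Cone}(Bf),A)$. The tautological short exact sequence of complexes
$$0\longrightarrow \bar C^{\bullet-1}(H,A)\longrightarrow D^\bullet\longrightarrow \bar C^\bullet(G,A)\longrightarrow 0,$$
with inclusion $\alpha\mapsto(0,\alpha)$ and projection $(c,\alpha)\mapsto c$, yields a long exact sequence whose connecting map $H^{n-1}(H,A)\to H^n(D^\bullet)$ and whose induced map $H^n(G,A)\to H^n(H,A)$ agree (up to sign) with the corresponding maps in the Puppe cohomology sequence of the cofibration $BH\to BG\to {\sf Cone}(Bf)$. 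A five lemma ladder argument, with the identity on $H^\bullet(G,A)$ and $H^\bullet(H,A)$, then supplies the isomorphism $H^n(D^\bullet)\cong H^n(f,A)$ for $n\ge 1$. The restriction to $n\ge 1$ reflects the fact that $D^\bullet$ uses the full unreduced cochains on $G$, while $H^n(f,A)$ is (in effect) reduced cohomology of the cone, so agreement can fail in degree $0$ only.

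The only genuine technicality is the sign bookkeeping in the mapping cone, since sign conventions differ between sources; once the differential $d$ on $D^\bullet$ has been pinned down as above, everything reduces to a formal matching of definitions and a standard comparison of long exact sequences.
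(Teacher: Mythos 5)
Your first step is fine and coincides with what the paper implicitly uses: the algebraic mapping cone $D^\bullet$ with $D^n=\bar C^n(G,A)\oplus\bar C^{n-1}(H,A)$ has $n$-cocycles and $n$-coboundaries exactly $\bar Z^n(f,A)$ and $\bar B^n(f,A)$, so $H^n(D^\bullet)=\bar Z^n(f,A)/\bar B^n(f,A)$. The gap is in the second step, the identification $H^n(D^\bullet)\cong H^n({\sf Cone}(Bf),A)$. You propose to compare the long exact sequence of $0\to\bar C^{\bullet-1}(H,A)\to D^\bullet\to\bar C^\bullet(G,A)\to 0$ with the Puppe sequence of $BH\to BG\to{\sf Cone}(Bf)$ and invoke the five lemma ``with the identity on $H^\bullet(G,A)$ and $H^\bullet(H,A)$''. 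But the five lemma requires a map on the middle terms $H^n(D^\bullet)\to H^n(f,A)$ making the ladder commute, and you never construct one. Two exact sequences with isomorphic outer terms need not have isomorphic middle terms (already $0\to\mathbb Z/2\to A\to\mathbb Z/2\to 0$ with $A=\mathbb Z/4$ versus $A=\mathbb Z/2\oplus\mathbb Z/2$ shows this), so the argument as written does not close.

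Constructing that comparison map is precisely the content of the lemma, and it is what the paper's proof supplies at the chain level: a natural homotopy equivalence ${\sf Cone}(C_\bullet(Bf))\simeq C_\bullet({\sf Cone}(Bf))$, hence $C^\bullet({\sf Cone}(Bf),A)\simeq{\sf Cone}(C^\bullet(Bf,A))[-1]$, together with the natural equivalences $\bar C^\bullet(G,A)\simeq C^\bullet(G,A)\simeq C^\bullet(BG,A)$ (naturality in the group is what makes the relevant square commute on the nose, so that one gets an induced equivalence of cones ${\sf Cone}(\bar C^\bullet(f,A))\simeq{\sf Cone}(C^\bullet(Bf,A))$). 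Once such a zig-zag of quasi-isomorphisms between $D^\bullet$ and $C^\bullet({\sf Cone}(Bf),A)$ (in degrees $\geq 1$) is in place, no five-lemma bookkeeping is even needed. So to repair your proof you must add this chain-level comparison, at which point your route essentially becomes the paper's.
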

\begin{proof} For a space $X$, we denote by $C_\bullet(X)$ the complex of integral chains.
Then $C_\bullet(X,A)=C_\bullet(X)\otimes A$ and $C^\bullet(X,A)={\sf Hom}(C_\bullet(X),A).$
For a continuous map $F:X\to Y$ we denote by $C_\bullet(F):C_\bullet(X)\to C_\bullet(Y)$ the induced morphism of complexes.
Then there is a natural homotopy equivalence of complexes ${\sf Cone}(C_\bullet(F))\simeq C_\bullet({\sf Cone}(F))$.
It follows that there is a natural homotopy equivalence of complexes
\begin{equation}\label{cone_cont}
C^\bullet({\sf Cone}(F),A)  \simeq  {\sf Cone}( C^\bullet(F,A))[-1].
\end{equation}

Denote by $C^\bullet(G,A)$ the complex of (non-normalised) cochains of the group $G$. For a homomorphism $f:H\to G$ we denote by $C^\bullet(f,A):C^\bullet(G,A)\to C^\bullet(H,A)$ the induced morphism of complexes. There is a natural homotopy equivalence  $C^\bullet(G,A)\simeq C^\bullet(BG,A)$. Moreover, there is a natural homotopy equivalence of complexes of normalised and non-normalised cochains $\bar C^\bullet(G,A)\simeq C^\bullet(G,A)$ because that come from two different functorial resolutions. It follows that there is a natural homotopy equivalence ${\sf Cone}(\bar C^\bullet(f,A))\simeq {\sf Cone}( C^\bullet(Bf,A)).$ Combining this with \eqref{cone_cont}
we get
$
C^\bullet({\sf Cone}(Bf),A)\xrightarrow{\sim}  {\sf Cone}( \bar C^\bullet(f,A))[-1].
$
The assertion follows.
\end{proof}

\subsection{Relative central extensions}

\begin{Definition}
A relative central extension of $G$ by $A$ with respect to $f$ is a couple $\mathcal E=($\hbox{$A\overset{\iota}\mono E\overset{\pi}\epi G$}$,\tilde f),$
where $A\overset{\iota}\mono E\overset{\pi}\epi G$ is a central extension of $G$ and  $\tilde f:H\to E$ is a homomorphism such that $\pi \tilde f=f.$
$$
\xyma{ &  & H \ar@{->}[d]^{\tilde f} \ar@{->}[rd]^f\\
0 \ar@{->}[r] & A \ar@{->}[r]^\iota & E\ar@{->}[r]^\pi &
G\ar@{->}[r] & 1 }
$$
Two relative central extensions $(A\overset{\iota_1}\mono
E_1\overset{\pi_1}\epi G, \tilde f_1)$ and
$(A\overset{\iota_2}\mono E_2\overset{\pi_2}\epi G,\tilde f_2)$
are said to be equivalent if there exist an isomorphism
$\theta:E_1\overset{\cong}\to E_2$ such that $\theta
\iota_1=\iota_2,$ $\pi_2\theta=\pi_1$ and $\theta\tilde f_1=\tilde
f_2.$
 \end{Definition}

Let $(c,\alpha)\in \bar Z^2(f,A).$ Consider the central extension
$A\mono E_{c}\epi G$ corresponding to the $2$-cocycle $c.$ The
underling set of $E_c$ is equal to $A\times G$ and the product is
given by $$(a_1,g_1)(a_2,g_2)=(a_1+a_2+c(g_1,g_2),g_1g_2).$$
Denote by $\tilde f_\alpha:H\to E_c$ the map given by $\tilde
f_\alpha(h)=(\alpha(h),f(h)).$ Note that the equality
$f^*c=-\partial \alpha$ implies the equality
$$c(f(h_1),f(h_2))=-\alpha(h_1)+\alpha(h_1h_2)-\alpha(h_2)$$ for all
$h_1,h_2\in H.$ It follows that $\tilde{f}_\alpha$ is a
homomorphism. Indeed \begin{multline*}\tilde f_\alpha(h_1)\tilde
f_\alpha(h_2)=(\alpha(h_1),f(h_1))(\alpha(h_2),f(h_2)) =\\
(\alpha(h_1)+\alpha(h_2)+c(f(h_1),f(h_2)),f(h_1)f(h_2)) =\\
(\alpha(h_1h_2),f(h_1h_2))
=\tilde{f}_\alpha(h_1h_2).\end{multline*} Then we obtain a
relative central extension
$$\mathcal E(c,\alpha)=(A\mono E_c \epi G,\tilde f_\alpha).$$

\begin{Proposition}\label{proposition_bijection_H_2} The map $(c,\alpha)\mapsto \mathcal E(c,\alpha)$ induces a bijection between elements of $H^2(f,A)$ and equivalence classes of relative central extensions of $G$ by $A$ with respect to $f$.
\end{Proposition}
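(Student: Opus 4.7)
The plan is to extend the classical bijection between $H^2(G,A)$ and equivalence classes of central extensions to this relative setting, with the new datum $\alpha \in \bar C^1(H,A)$ recording how the lift $\tilde f$ fails to be a set-theoretic section composed with $f$. I will establish the three statements: well-definedness modulo $\bar B^2(f,A)$, surjectivity, and injectivity, all by explicit cocycle manipulations.

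Well-definedness: suppose $(c',\alpha') = (c,\alpha) + (-\partial\beta,\beta f)$ for some $\beta \in \bar C^1(G,A)$. Define $\theta: E_c \to E_{c'}$ by $\theta(a,g)=(a+\beta(g),g)$. Using the normalization $\beta(1)=0$ and the cocycle identity $c'=c-\partial\beta$, a direct computation shows $\theta$ is a group homomorphism (hence an isomorphism with inverse $(a,g)\mapsto (a-\beta(g),g)$), satisfies $\theta\iota=\iota'$ and $\pi'\theta=\pi$, and most importantly $\theta \tilde f_\alpha(h) = (\alpha(h)+\beta(f(h)),f(h)) = \tilde f_{\alpha'}(h)$, giving the required equivalence.

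Surjectivity: given a relative central extension $(A\mono E\overset{\pi}\epi G,\tilde f)$, pick a normalized set-theoretic section $s:G\to E$ with $s(1)=1$. Define $c(g_1,g_2)=s(g_1)s(g_2)s(g_1g_2)^{-1}\in A$ (the usual 2-cocycle of the extension). Since $\pi\tilde f = f = \pi s f$, there is a unique $\alpha:H\to A$ with $\tilde f(h)=\alpha(h)\cdot s(f(h))$, and $\alpha(1)=0$. Comparing the products $\tilde f(h_1)\tilde f(h_2)=\tilde f(h_1h_2)$ and using centrality gives $f^*c=-\partial\alpha$, so $(c,\alpha)\in \bar Z^2(f,A)$. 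The identification $E\cong E_c$ induced by $s$ then carries $\tilde f$ to $\tilde f_\alpha$, showing $\mathcal E(c,\alpha)$ is equivalent to the given extension.

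Injectivity: suppose $\theta: E_c\overset{\cong}{\to} E_{c'}$ is an equivalence of relative central extensions with $\theta\tilde f_\alpha=\tilde f_{\alpha'}$. Since $\pi'\theta=\pi$, there exists $\beta:G\to A$ with $\beta(1)=0$ such that $\theta(0,g) = (\beta(g),g)$; comparing the two multiplications yields $c'-c = -\partial\beta$. Evaluating $\theta\tilde f_\alpha=\tilde f_{\alpha'}$ at $h\in H$ gives $\alpha'(h)-\alpha(h)=\beta(f(h))$, i.e.\ $\alpha'-\alpha = \beta f$. Therefore $(c',\alpha')-(c,\alpha)=(-\partial\beta,\beta f)\in \bar B^2(f,A)$, as required.

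The proof is essentially an unpacking of definitions; the only part that requires any alertness is checking that the condition $f^*c=-\partial\alpha$ is precisely what makes $\tilde f_\alpha$ a homomorphism (already verified in the text), and dually that the freedom in choosing a section $s$ in the surjectivity step corresponds exactly to the coboundaries in $\bar B^2(f,A)$. No step is a serious obstacle — the main care needed is to keep normalization conventions consistent so that $\beta(1)=0$ and $\alpha(1)=0$ throughout.
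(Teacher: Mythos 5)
Your proposal is correct and follows essentially the same route as the paper: the same translation map $\theta(a,g)=(a+\beta(g),g)$ for well-definedness and injectivity, and the same cocycle/cochain extraction from a lift $\tilde f$ for surjectivity (the paper merely first invokes the classical reduction to extensions of the standard form $E_c$ instead of choosing a set-theoretic section explicitly, and in the injectivity step spells out that $\theta$ fixes $A$, which you use implicitly via the definition of equivalence).
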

\begin{proof}
Any central extension is equivalent to the extension $A\mono
E_{c}\epi G$ for a normalised  2-cocycle $c.$ Hence, it is
sufficient to consider only them. Consider a relative central
extension $(A\mono E_{c}\epi G,\tilde f).$ Define $\alpha:H \to A$
so that $\tilde f(h)=(\alpha(h),f(h)).$ Since
$\tilde{f}(1)=(0,1),$ $\alpha$ is a normalised 1-cochain. Since
$\tilde f$ is a homomorphism, we get
$$(\alpha(h_1)+\alpha(h_2)+c(f(h_1),f(h_2)),f(h_1)f(h_2))=(\alpha(h_1h_2),
f(h_1h_2)).$$
 Thus $f^*c=-\partial \alpha.$ It follows that any relative central extension is isomorphic to the relative central extension $\mathcal E(c,\alpha)$ for some $(c,\alpha)\in Z^2(f,A).$

Consider two elements $(c,\alpha),(c',\alpha')\in Z^2(f,A)$ such
that $(c',\alpha')-(c,\alpha)=(-\partial \beta,\beta f)$ for some
$\beta\in \bar C^1(G,A).$ It follows that
$$c'(g_1,g_2)+\beta(g_1)+\beta(g_2)=c(g_1,g_2)+\beta(g_1g_2)$$ for
any $g_1,g_2\in G$ and $\alpha'(h)=\alpha(h)+\beta(f(h))$ for any
$h\in H.$ Denote by $\theta_\beta:E_c\to E_{c'}$ the map given by
$\theta_\beta(a,g)=(a+\beta(g),g).$ Then $\theta_\beta$ is a
homomorphism. Indeed,
\begin{multline*}\theta_\beta(a_1,g_1)\theta_\beta(a_2,g_2)=(a_1+\beta(g_1),g_1)(a_2+\beta(g_2),g_2)=\\
(a_1+a_2+\beta(g_1)+\beta(g_2)+c'(g_1,g_2),g_1g_2)=\\
(a_1+a_2+\beta(g_1g_2)+c(g_1,g_2),g_1g_2)=\theta_{\beta}((a_1,g_1)(a_2,g_2)).\end{multline*}
Moreover, $\theta_\beta$ is an isomorphism, because
$\theta_{-\beta}$ is its inverse, and it is easy to see that it is
an equivalence of the relative central extensions. It follows that
the map $(c,\alpha)\mapsto \mathcal E(c,\alpha)$ induces a
surjective map from elements of $H^2(f,A)$ to equivalence classes
of relative central extensions.

Consider two elements $(c,\alpha),(c',\alpha')\in Z^2(f,A)$ such that the relative central extensions $\mathcal E(c,\alpha)$ and
 $\mathcal E(c',\alpha')$ are equivalent. Then there is an equivalence $\theta:E_c\to E_{c'}.$ Since $\theta$ respects the i
 njections from $A,$ $\varphi(a,1)=(a,1)$ for any $a\in A.$ Since $\theta$ respects  the projections on $G,$ there exist a unique
 normalised 1-cochain $\beta:G\to A$ such that $\theta(0,g)=(\beta(g),g)$ for any $g\in G.$ Using that $c$ is a normalised 2-cocycle, we obtain
 $$\theta(a,g)=\theta((a,1)(0,g))=(a,1)(\beta
(g),g)=(a+\beta(g),g).$$ Then the fact that $\theta$ is a
homomorphism implies that
$$c'(g_1,g_2)+\beta(g_1)+\beta(g_2)=c(g_1,g_2)+\beta(g_1g_2),$$ and
hence $c'-c=-\partial \beta,$ and the equality
 $\theta\tilde f_\alpha=\tilde{f}_{\alpha'}$ implies
 $\alpha'-\alpha=\beta f.$ The assertion follows.
\end{proof}

\subsection{Universal relative central extensions}

Let $A_1$ and $A_2$ be abelian groups. Recall that c morphism from a central extension
 $A_1\overset{\iota_1}\mono E_1\overset{\pi_1}\epi G$
 to a central extension
 $A_2\overset{\iota_2}\mono E_2 \overset{\pi_2}\epi G$ is a couple
 $(\varphi,\theta),$ where  $\varphi:A_1\to A_2$ and
 $\theta:E_1\to E_2$
are homomorphisms such that
 $\theta \iota_1=\iota_2\tau,$ $ \pi_2 \theta=\pi_1.$
\begin{Lemma}[{cf. \cite[Lemma 6.9.6]{Weibel}}]\label{lemma_restriction_to_commutator} Let $(\varphi,\theta)$ and $(\varphi',\theta')$ be morphisms
from a central extension $A_1\mono E_1\epi G$
 to a central extension
 $A_2\mono E_2 \epi G.$ Then the restrictions on the commutator subgroup coincide  $\theta|_{[E_1,E_1]}=\theta'|_{[E_1,E_1]}.$
\end{Lemma}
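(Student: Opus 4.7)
The plan is to exploit the classical trick that a ``difference'' of two liftings of the same homomorphism into a central extension is itself a central-valued homomorphism, and hence kills commutators.

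First I would define the pointwise ratio $\psi \colon E_1 \to E_2$ by $\psi(e) = \theta(e)\theta'(e)^{-1}$. Since both $(\varphi,\theta)$ and $(\varphi',\theta')$ are morphisms of central extensions, they satisfy $\pi_2\theta = \pi_1 = \pi_2\theta'$, so $\pi_2(\psi(e)) = 1$ and therefore $\psi(e) \in \iota_2(A_2)$ for every $e \in E_1$. In particular, the image of $\psi$ lies in the \emph{center} of $E_2$, so conjugation by $\theta(e_1)$ or $\theta'(e_1)$ fixes every $\psi(e_2)$.

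Next I would check that $\psi$ is a homomorphism. Using centrality of $\psi(e_2)$ in $E_2$,
\begin{align*}
\psi(e_1 e_2) &= \theta(e_1)\theta(e_2)\theta'(e_2)^{-1}\theta'(e_1)^{-1}\\
&= \theta(e_1)\,\psi(e_2)\,\theta'(e_1)^{-1}\\
&= \psi(e_2)\,\theta(e_1)\theta'(e_1)^{-1}
= \psi(e_2)\psi(e_1) = \psi(e_1)\psi(e_2),
\end{align*}
where the final equality uses that $\psi(e_1),\psi(e_2)$ both lie in the abelian subgroup $\iota_2(A_2)$. Hence $\psi$ is a group homomorphism from $E_1$ into the abelian group $\iota_2(A_2)$, so it factors through the abelianization $E_1/[E_1,E_1]$ and therefore vanishes on $[E_1,E_1]$. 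This gives $\theta(e) = \theta'(e)$ for every $e\in [E_1,E_1]$, which is the desired conclusion.

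There is essentially no obstacle here: the only subtlety is keeping track that $\psi$ really lands inside the central subgroup (so the centrality hypothesis on the extension $A_2 \mono E_2 \epi G$ is used decisively), and no assumption whatsoever on $\varphi$ versus $\varphi'$ is needed, since we only project to $G$ through $\pi_2$. The argument is in fact the standard one from \cite{Weibel}, Lemma 6.9.6, adapted verbatim.
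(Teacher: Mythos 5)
Your proof is correct and follows essentially the same route as the paper: both define the pointwise difference $\theta(x)\theta'(x)^{-1}$, use centrality of $A_2$ in $E_2$ to show it is a homomorphism into an abelian (central) subgroup, and conclude it kills $[E_1,E_1]$. Your explicit verification that the difference map lands in $\iota_2(A_2)$ via $\pi_2\theta=\pi_1=\pi_2\theta'$ is a detail the paper leaves implicit, but the argument is the same.
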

\begin{proof}
For the sake of simplicity we identify $A_1$ with the subgroup of
$E_1$ and $A_2$ with the subgroup of $E_2.$ Consider the map
$\rho:E_1\to A_2$ given by $\rho(x)=\theta(x)\theta'(x)^{-1}.$
Since $A_2$ is central, we get
\begin{multline*}\rho(x)\rho(y)=\theta(x)\theta'(x)^{-1}\rho(y)=\theta(x)\rho(y)\theta'(x)^{-1}=\\ \theta(x)\theta(y)\theta'(y)^{-1}\theta'(x)^{-1}=
\theta(xy)\theta'(xy)^{-1}=\rho(xy).\end{multline*} Hence $\rho$
is a homomorphism to an abelian group. Thus $\rho|_{[E_1,E_1]}=1.$
\end{proof}

\begin{Lemma}\label{lemma_splits} Let $(\varphi,\theta)$  be a morphism from a central extension $A_1\overset{\iota_1}\mono E_1 \overset{\pi_1}\epi G$
 to a central extension
 $A_2\overset{\iota_2}\mono E_2 \overset{\pi_2}\epi G.$ If $\varphi=0,$ then $A_2\mono E_2 \epi G$ splits.
\end{Lemma}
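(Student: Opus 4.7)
The plan is to use the hypothesis $\varphi = 0$ to factor $\theta$ through the quotient $E_1/\iota_1(A_1) \cong G$, producing the desired splitting of $\pi_2$ as a byproduct.

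More concretely, I would first observe that the commutativity relation $\theta \iota_1 = \iota_2 \varphi$ for a morphism of central extensions, combined with $\varphi = 0$, forces $\theta \iota_1 = 0$. Thus the image $\iota_1(A_1)$ sits inside the kernel of $\theta$. Since the given central extension is exact, $\iota_1(A_1) = \ker(\pi_1)$, so by the universal property of quotients there exists a unique homomorphism $s : G \to E_2$ satisfying $s \pi_1 = \theta$.

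Next I would verify that $s$ is a section of $\pi_2$. Composing the defining equation with $\pi_2$ gives $\pi_2 s \pi_1 = \pi_2 \theta = \pi_1$, where the last equality is exactly the other commutativity relation for a morphism of central extensions. Since $\pi_1$ is surjective, we may cancel it on the right to conclude $\pi_2 s = \mathrm{id}_G$, which means $A_2 \mono E_2 \epi G$ splits.

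There is no real obstacle here: the argument is entirely formal and uses only the definitions of a morphism of central extensions and of a split extension. The only small subtlety worth spelling out is the appeal to the universal property of the quotient, which is legitimate because $A_2$ (and hence its image in $E_2$) is abelian and central, so no compatibility issues with the group structure on $E_2$ arise when factoring $\theta$.
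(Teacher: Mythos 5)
Your proof is correct and is essentially the paper's argument: factor $\theta$ through $G=\operatorname{Coker}(\iota_1)$ using $\theta\iota_1=\iota_2\varphi=0$, then compose with $\pi_2$ and cancel the surjection $\pi_1$ to get $\pi_2 s=\mathrm{id}_G$. One small correction: the legitimacy of factoring through the quotient rests on $\iota_1(A_1)$ being normal in $E_1$ (automatic, since it is central in $E_1$) and contained in $\ker\theta$, not on any property of $A_2$ inside $E_2$.
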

\begin{proof}
$\varphi=0$ implies $\theta \iota_1=0.$ Since $G$ is a cokernel of $\iota_1,$ there exists  $s:G\to E_2$ such that $s\pi_1=\theta.$
Then $\pi_2 s\pi_1=\pi_2\theta=\pi_1.$ It follows that $\pi_2 s={\sf id}.$
\end{proof}
\begin{Definition}
A morphism from a relative central extension
 $(A_1\mono E_1\epi G,\tilde f_1)$
 to a relative central extension
 $(A_2\mono E_2 \epi G,\tilde f_2)$ is a morphism
 $(\varphi,\theta)$  from the central extension $A_1\mono E_1\epi G$ to the central extension $A_2\mono E_2\epi G$ such that $\theta \tilde f_1=\tilde f_2.$ So relative central extensions of $G$ with respect to $f$ form a category. The initial object of this category is called the {\it universal relative central extension} of $G$ with respect to $f.$
\end{Definition}

\begin{Example}\label{example_E_phi} For any homomorphism $\varphi:A_1\to A_2$ and any $(c,\alpha)\in Z^2(f,A_1)$ there is a morphism
of relative central extensions
\begin{equation}
\mathcal E(\varphi):\mathcal E(c,\alpha)\longrightarrow \mathcal E(\varphi c,\varphi\alpha), \hspace{1cm} \mathcal E(\varphi)=(\varphi,\varphi\times {\sf id}).
\end{equation}
\end{Example}

\begin{Definition}
A homomorphism $f:H\to G$ is said to be {\it perfect} if $f_{\sf ab}:H_{\sf ab}\to G_{\sf ab}$ is an epimorphism. In other words, $f$ is perfect if and only if
$H_1(f)=0.$
\end{Definition}

\begin{Lemma}[{cf. \cite[Lemma 6.9.6]{Weibel}}]\label{lemma_perf_unique} Let $(\varphi,\theta)$ and $(\varphi',\theta')$ be morphisms from a relative central extension $(A_1\mono E_1\epi G,\tilde f_1)$
 to a relative central extension
 $(A_2\mono E_2 \epi G,\tilde f_2).$
  If $\tilde f_1$ is perfect, then $(\varphi,\theta)=(\varphi',\theta').$
\end{Lemma}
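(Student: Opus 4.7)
The plan is to reduce the statement to the previous lemma, Lemma \ref{lemma_restriction_to_commutator}, which already tells us that $\theta$ and $\theta'$ agree on the commutator subgroup $[E_1,E_1]$. So the only remaining task is to see that the perfectness of $\tilde f_1$ forces agreement on all of $E_1$, and then that $\theta=\theta'$ automatically forces $\varphi=\varphi'$.

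First I would unpack the hypothesis. By definition, $\tilde f_1$ being perfect means that $(\tilde f_1)_{\sf ab}\colon H_{\sf ab}\to (E_1)_{\sf ab}$ is surjective; equivalently, $E_1=[E_1,E_1]\cdot\tilde f_1(H)$, so every $x\in E_1$ can be written as $x=c\cdot\tilde f_1(h)$ with $c\in[E_1,E_1]$ and $h\in H$. Now I compute
\[
\theta(x)=\theta(c)\,\theta(\tilde f_1(h))=\theta(c)\,\tilde f_2(h),\qquad
\theta'(x)=\theta'(c)\,\theta'(\tilde f_1(h))=\theta'(c)\,\tilde f_2(h),
\]
using the compatibility $\theta\tilde f_1=\tilde f_2=\theta'\tilde f_1$ built into the notion of a morphism of relative central extensions. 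By Lemma \ref{lemma_restriction_to_commutator} applied to $(\varphi,\theta)$ and $(\varphi',\theta')$ as morphisms of the underlying central extensions, $\theta(c)=\theta'(c)$. Hence $\theta(x)=\theta'(x)$ and $\theta=\theta'$.

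Finally, $\varphi=\varphi'$ is immediate from $\theta\iota_1=\iota_2\varphi$ and $\theta'\iota_1=\iota_2\varphi'$: these two equalities together with $\theta=\theta'$ give $\iota_2\varphi=\iota_2\varphi'$, and injectivity of $\iota_2$ forces $\varphi=\varphi'$.

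There is no real obstacle here; the only substantive point is recognizing that the correct reading of ``perfect'' in the present relative setting is exactly that $E_1$ is generated by $[E_1,E_1]$ together with the image of $\tilde f_1$, which is precisely what one needs in order to promote the previous commutator-subgroup lemma to agreement on all of $E_1$.
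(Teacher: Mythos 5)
Your proof is correct and follows essentially the same route as the paper: perfectness gives $E_1={\rm Im}(\tilde f_1)[E_1,E_1]$, the compatibility $\theta\tilde f_1=\tilde f_2=\theta'\tilde f_1$ gives agreement on ${\rm Im}(\tilde f_1)$, and Lemma \ref{lemma_restriction_to_commutator} gives agreement on $[E_1,E_1]$. Your explicit final step deducing $\varphi=\varphi'$ from $\theta=\theta'$ and injectivity of $\iota_2$ is merely spelled out where the paper says ``the assertion follows.''
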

\begin{proof} Since $\tilde f_1$ is perfect, we obtain ${\rm Im}(\tilde f_1)[E_1,E_1]=E_1.$
Since $\theta \tilde f_1=\tilde f_2=\theta' \tilde f_1,$ we have $\theta|_{{\rm Im}(\tilde f_1)}=\theta'|_{{\rm Im}(\tilde f_1)}.$ Lemma
\ref{lemma_restriction_to_commutator} implies $\theta|_{[E_1,E_1]}=\theta'|_{[E_1,E_1]}.$ The assertion follows.
\end{proof}

\begin{Proposition}\label{proposition_univ_central} The universal relative central extension of $G$ with respect to $f$ exists if and only if $f$
is perfect. Moreover, in this case it is unique (up to isomorphism) and given by a short exact sequence
$$
\xyma{& & H\ar@{->}[d]^u \ar@{->}[rd]^f\\ 0\ar@{->}[r] & H_2(f)
\ar@{->}[r] & U \ar@{->}[r] & G \ar@{->}[r] & 1}
$$
where $u:H\to U$ is a perfect homomorphism.
\end{Proposition}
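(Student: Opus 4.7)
\emph{Plan.} The strategy is to prove the two implications of the equivalence separately, paralleling the classical construction of the universal central extension of a perfect group (cf.\ Lemma 6.9.6 of \cite{Weibel}). Uniqueness of the universal object up to isomorphism is a formal consequence of the fact that initial objects in any category are unique up to unique isomorphism, so only existence and the two implications require real work.

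\emph{Sufficiency.} Assume $f$ is perfect, i.e.\ $H_1(f)=0$. From the long exact cohomology sequence of $f$ (dualised against $A$) together with the universal-coefficient formula for the mapping cone one obtains a natural isomorphism $H^2(f,A)\cong{\sf Hom}(H_2(f),A)$ for every abelian $A$. Specialising to $A=H_2(f)$ and taking the class corresponding to the identity, Proposition \ref{proposition_bijection_H_2} produces a distinguished relative central extension $\mathcal E_0=(H_2(f)\mono U\epi G,u)$. For any other relative central extension $(A\mono E\epi G,\tilde f)$ whose cohomology class corresponds to $\varphi\in{\sf Hom}(H_2(f),A)$, the recipe of Example \ref{example_E_phi} supplies a morphism $\mathcal E(\varphi)\colon\mathcal E_0\to(A\mono E\epi G,\tilde f)$, which establishes the existence of morphisms out of $\mathcal E_0$.

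\emph{Perfectness of $u$.} The main obstacle is showing that $u\colon H\to U$ is perfect, after which Lemma \ref{lemma_perf_unique} gives uniqueness of the morphisms supplied above. I would apply the five-term exact sequence of the central extension $H_2(f)\mono U\epi G$,
\[H_2(G)\longrightarrow H_2(f)\longrightarrow U_{\sf ab}\longrightarrow G_{\sf ab}\longrightarrow 0,\]
and identify its first arrow with the connecting map $H_2(G)\to H_2(f)$ of the long exact sequence of $f$ by tracing the tautological class through the constructions of Section 2.1 on the level of normalised cocycles. Since $f_{\sf ab}$ is epi, the composite $H_{\sf ab}\xrightarrow{u_{\sf ab}}U_{\sf ab}\twoheadrightarrow G_{\sf ab}$ is epi; hence $U_{\sf ab}$ is generated by ${\rm Im}(u_{\sf ab})$ together with the image of $H_2(f)$. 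Using naturality of the five-term sequence and the explicit formula $u(h)=(\alpha_0(h),f(h))$, one then checks that the image of $H_2(f)$ in $U_{\sf ab}$ already lies in ${\rm Im}(u_{\sf ab})$, whence $u_{\sf ab}$ is surjective.

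\emph{Necessity.} Conversely, suppose a universal object $(A_0\mono U_0\epi G,u)$ exists and that $H_1(f)\neq 0$. The composite $\psi\colon U_0\twoheadrightarrow G\twoheadrightarrow G_{\sf ab}\twoheadrightarrow H_1(f)$ is a nonzero homomorphism that vanishes on $A_0$ (whose image in $G$ is trivial) and on ${\rm Im}(u)$ (since $f_{\sf ab}(H_{\sf ab})$ is killed in $H_1(f)$). Consequently the two assignments
\[\theta_0(x)=(0,\pi(x)),\qquad \theta_1(x)=(\psi(x),\pi(x))\]
define distinct morphisms (both paired with $0\colon A_0\to H_1(f)$) from $(A_0,U_0,u)$ to the trivial split relative central extension $(H_1(f)\mono H_1(f)\times G\epi G,(0,f))$, contradicting the uniqueness clause of the universal property. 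Hence $f$ must be perfect.
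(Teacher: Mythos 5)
Your proposal is correct, and for most of the statement it runs parallel to the paper: your necessity argument (two morphisms to the split extension by $H_1(f)\times G$, both with zero first component) is literally the paper's argument with $B={\rm Coker}(f_{\sf ab})=H_1(f)$, and your construction of the candidate $\mathcal E_u=\mathcal E(c_u,\alpha_u)$ from the tautological class under $H^2(f,A)\cong{\sf Hom}(H_2(f),A)$, together with the morphisms $\mathcal E(\varphi)$ of Example \ref{example_E_phi} and the reduction of uniqueness to Lemma \ref{lemma_perf_unique}, is exactly the paper's. The genuine divergence is in the proof that $u$ is perfect. The paper argues internally: it sets $E={\rm Im}(u)[U,U]$, $A=\iota_u^{-1}(E)$, restricts the extension to $E$, composes with $\mathcal E(\varphi')$ for the projection $\varphi'\colon H_2(f)\to H_2(f)/A$, and uses Lemma \ref{lemma_splits} plus $H^2(f,-)\cong{\sf Hom}(H_2(f),-)$ to force $\varphi'=0$, hence $A=H_2(f)$ and $E=U$. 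You instead use the five-term sequence of $H_2(f)\mono U\epi G$ together with the homology exact sequence of the cone; this works and is arguably more transparent, avoiding the auxiliary splitting lemma at the cost of a chain-level evaluation.

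One caution about where the real content of your route sits. The identification you emphasize (transgression of the extension versus the natural map $H_2(G)\to H_2(f)$) only controls the kernel of $H_2(f)\to U_{\sf ab}$, giving an abstract isomorphism of its image with a subgroup of $H_{\sf ab}$; that alone does not place this image inside ${\rm Im}(u_{\sf ab})$. What your ``one then checks'' must actually deliver is the compatibility one step further along: the composite $H_2(f)\xrightarrow{\ \partial\ }H_1(H)\xrightarrow{\ u_{\sf ab}\ }U_{\sf ab}$ coincides, up to sign, with the canonical map $H_2(f)\to U_{\sf ab}$, $a\mapsto (a,1)\bmod [U,U]$. This is true and is proved by exactly the means you name: represent $z\in H_2(f)$ by a cone cycle $(\eta,\xi)$ with $d\xi=\pm f_*\eta$; then summing $u(h)=(\alpha_u(h),f(h))$ over $\eta$ and using the relation $[(0,g_1)]+[(0,g_2)]-[(0,g_1g_2)]=[c_u(g_1,g_2)]$ in $U_{\sf ab}$ gives $u_{\sf ab}(\partial z)=[\langle\alpha_u,\eta\rangle\pm\langle c_u,\xi\rangle]=\pm[z]$, since $(c_u,\alpha_u)$ represents the tautological class. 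With that computation written out, your argument is a complete and genuinely different proof of the perfectness of $u$.
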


\begin{proof}[Proof of Proposition \ref{proposition_univ_central}]
Assume that there is a universal relative central extension $(A\mono U \overset{\pi}\epi G, u)$ of $G$ with respect to $f$ and prove that $f$ is perfect.
Set $B={\rm Coker}(f_{\sf ab}:H_{\sf ab}\to G_{\sf ab})$ and consider the epimorphism $\tau:U\epi B$ given by the composition $U\epi G\epi G_{\sf ab} \epi B. $
Then we have two morphisms $(0,\binom{\tau}{\pi})$ and $(0,\binom{0}{\pi})$ from $(A\mono U \epi G, u)$ to the split extension $(B \mono B\times G \epi G,
\binom{0}{f}).$ The universal property implies that they are equal, and hence $B=0.$ Thus $f$ is perfect.

Assume that $f$ is perfect. Since $H_1({\sf Cone}(Bf))=H_1(f)=0,$ the universal coefficient formula for the space ${\sf Cone}(Bf)$ implies that there is an
isomorphism $H^2(f,A)\cong {\sf Hom}(H_2(f),A)$ natural by $A.$
Chose an element $(c_u,\alpha_u)\in Z^2(f,H_2(f))$ that represents the element of $H^2(f,H_2(f))$ corresponding to the identity map in ${\sf Hom}(H_2(f),H_2(f)).$
Set $U:=E_{c_u},$  $u=\tilde f_{\alpha_u}$ and $\mathcal E_u=\mathcal E(c_u,\alpha_u).$
Then $\mathcal E_u=(H_2(f)\overset{\iota_u}\mono U\overset{\pi_u}\epi G,u).$ Take a homomorphism $\varphi:H_2(f)\to A$ and consider the commutative diagram
$$
\xyma{ Z^2(f,H_2(f)) \ar@{->}[d]^{\varphi_*}\ar@{->>}[r] &
H^2(f,H_2(f)) \ar@{->}[d]^{\varphi_*} & {\sf Hom}(H_2(f),H_2(f))
\ar@{->}[d]^{\varphi \circ -}\ar@{->}[l]^\cong\\
Z^2(f,A) \ar@{->>}[r] & H^2(f,A) & {\sf Hom}(H_2(f),A).
\ar@{->}[l]^\cong }
$$
It shows that the isomorphism ${\sf Hom}(H_2(f),A)\cong H^2(f,A)$ sends $\varphi$ to the class of $(\varphi c_u,\varphi \alpha_u).$ Combining this with Proposition \ref{proposition_bijection_H_2} we obtain that any relative central extension is isomorphic to the extension $\mathcal E(\varphi c_u,\varphi \alpha_u)$ for some $\varphi:H_2(f)\to A.$ For any relative central extension $\mathcal E(\varphi c_u,\varphi \alpha_u)$ there exists a morphism $\mathcal E(\varphi):\mathcal E_u\to \mathcal E(\varphi c_u,\varphi \alpha_u)$ from
 Example \ref{example_E_phi}. Then we found a morphism from $\mathcal E_u$ to any other relative central extension. In order to prove that $\mathcal E_u$ is the universal relative central extension, we have to prove that such a morphism  is unique. By Lemma \ref{lemma_perf_unique} it is enough to prove that $u:H\to U$ is perfect.

Prove that $u:H\to U$ is perfect. In other words we  prove that \hbox{${\rm Im}(u)[U,U]=U.$} Set $E:={\rm Im}(u)[U,U],$ $A=\iota_u^{-1}(E) $ and $\tilde f:H\to E$ given by $\tilde f(h)=u(h).$ Note that $\tilde f$ is perfect. Since $f$ is perfect, $\pi_u(E)= {\rm Im}(f)[G,G]=G.$ Consider the restriction $\pi=\pi_u|_{E}$ and the relative central extension
${\mathcal E}=(A\mono E \overset{\pi }\epi G, \tilde f)$ with the obvious embedding ${\mathcal E} \hookrightarrow \mathcal E_u.$  Consider the projection $\varphi': H_2(f)\to H_2(f)/A$ and take the
 composition $$\mathcal E \hookrightarrow \mathcal E_u \xrightarrow{\mathcal E(\varphi')} \mathcal E(\varphi'c_u,\varphi'\alpha_u).$$ The composition is equal to $(0, (\varphi'\times 1)|_E).$  By Lemma \ref{lemma_splits} $E(\varphi'c_u,\varphi'\alpha_u)$ splits. Thus $(\varphi'c_u,\varphi'\alpha_u)$ represents $0$ in $H^2(f,A)\cong {\sf Hom}(H_2(f),A),$ and hence
$\varphi'=0.$ It follows that $A=H_2(f).$ Then the extension
$A\mono E \epi G$ is embedded into the extension $A \mono U \epi
G.$ It follows that $E=U.$
\end{proof}
\begin{Remark}\label{remark_H2}
If $f_{\sf ab}:H_{\sf ab}\to G_{\sf ab}$ is an isomorphism, then
$H_2(f)={\rm Coker}\{H_2(H)\to H_2(G)\}.$
\end{Remark}
\begin{Remark}\label{remark_univ_central} In the proof of Proposition \ref{proposition_univ_central} we show that, if $f$ is perfect, then the universal relative central extension corresponds to the identity map ${\sf Hom}(H_2(f),H_2(f))$ with respect to the isomorphism $H^2(f,H_2(f))\cong{\sf Hom}(H_2(f),H_2(f))$ that comes from the universal coefficient theorem.
\end{Remark}

\subsection{HZ-localization tower via relative central extensions}

Here we give an approach to the H$\mathbb Z$-localization tower
\cite{Bousfield} via relative central extensions.

Let $G$ be a group and $\eta:G\to EG$ be its H$\mathbb
Z$-localization. For an ordinal $\alpha$ we define $\alpha$th term
of the H$\mathbb Z$-localization tower by $T_\alpha
G:=EG/\gamma_\alpha(EG),$ where $\gamma_\alpha(EG)$ is the
$\alpha$th term of the transfinite lower central series (see
\cite[Theorem 3.11]{Bousfield}). By $\eta_\alpha:G\to T_\alpha G$
we denote the composition of $\eta$ and the canonical projection,
and by $t_\alpha:T_{\alpha+1}G\epi T_{\alpha}G$ we denote the
canonical projection. The main point of \cite{Bousfield} is that
$T_\alpha G$ can be constructed inductively and $EG=T_\alpha G$
for big enough $\alpha$. We threat the construction of $T_\alpha
G$ for a non-limit ordinal $\alpha$ via universal relative central
extensions.

\begin{Proposition} Let $G$ be a group and $\alpha>1$ be an ordinal. Then $(\eta_\alpha)_{\sf ab}: G_{\sf ab}\to (T_\alpha G)_{\sf ab}$ is an isomorphism, the universal central extension of $T_\alpha G$ with respect to $\eta_\alpha$ is given by
$$\xyma{ & & G\ar@{->}[d]^{\eta_{\alpha+1}}\ar@{->}[rd]^{\eta_\alpha} & & & \\
0\ar@{->}[r] & H_2(\eta_\alpha) \ar@{->}[r] & T_{\alpha+1} G
\ar@{->}[r]^{t_\alpha} & T_{\alpha} G \ar@{->}[r] & 1, }
$$
and $H_2(\eta_\alpha)={\rm Coker}\{H_2(G)\to H_2(T_\alpha G)\}.$
\end{Proposition}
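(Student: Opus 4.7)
The plan is to reduce to Proposition \ref{proposition_univ_central} by first establishing that $\eta_\alpha$ is perfect, and then to identify the resulting universal relative central extension with $T_{\alpha+1}G$ by comparing with Bousfield's construction of the tower.

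First, I would verify that $(\eta_\alpha)_{\sf ab}: G_{\sf ab}\to (T_\alpha G)_{\sf ab}$ is an isomorphism. Since $\eta:G\to EG$ is an $H\mathbb Z$-map, it induces an isomorphism on $H_1$, i.e.\ on abelianizations. For $\alpha\geq 2$ the transfinite lower central series satisfies $\gamma_\alpha(EG)\subseteq \gamma_2(EG)=[EG,EG]$, so the canonical projection $EG\twoheadrightarrow T_\alpha G=EG/\gamma_\alpha(EG)$ induces an isomorphism on abelianizations. Composing, $(\eta_\alpha)_{\sf ab}$ is an isomorphism, and in particular $\eta_\alpha$ is perfect in the sense of this section.

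Second, I would apply Proposition \ref{proposition_univ_central} to the perfect map $\eta_\alpha$ to produce the universal relative central extension of $T_\alpha G$ with respect to $\eta_\alpha$, uniquely of the form
$$0\to H_2(\eta_\alpha)\to U\to T_\alpha G\to 1$$
with perfect $u:G\to U$. The identification $H_2(\eta_\alpha)={\rm Coker}\{H_2(G)\to H_2(T_\alpha G)\}$ then follows from Remark \ref{remark_H2}, since $(\eta_\alpha)_{\sf ab}$ is an isomorphism.

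The main step is to identify $U$ with $T_{\alpha+1}G$. The short exact sequence $\gamma_\alpha(EG)/\gamma_{\alpha+1}(EG)\mono T_{\alpha+1}G\epi T_\alpha G$ is a central extension by the very definition of the transfinite lower central series, and together with $\eta_{\alpha+1}$ it forms a relative central extension of $T_\alpha G$ with respect to $\eta_\alpha$. By universality there is a canonical morphism $(U,u)\to (T_{\alpha+1}G,\eta_{\alpha+1})$ of relative central extensions. For the reverse, I would invoke Bousfield's construction of the $H\mathbb Z$-tower in \cite[\S3]{Bousfield}, which builds $T_{\alpha+1}G$ from $T_\alpha G$ as the central extension of $T_\alpha G$ classified by the identity element in ${\sf Hom}(H_2(\eta_\alpha),H_2(\eta_\alpha))$ under the universal coefficient isomorphism $H^2(\eta_\alpha,H_2(\eta_\alpha))\cong {\sf Hom}(H_2(\eta_\alpha),H_2(\eta_\alpha))$. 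By Remark \ref{remark_univ_central} this coincides with $U$, yielding the desired identification.

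The main obstacle is the dictionary between Bousfield's original description of the successor step in the $H\mathbb Z$-tower and the relative central extension framework developed in Section 2; once that correspondence is pinned down via Remark \ref{remark_univ_central}, the remaining claims are formal and both follow simultaneously from Proposition \ref{proposition_univ_central} and Remark \ref{remark_H2}.
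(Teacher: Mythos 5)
Your proposal is correct and follows essentially the same route as the paper, whose proof is just a citation of Bousfield's 3.2 and 3.4 together with Proposition \ref{proposition_univ_central} and Remarks \ref{remark_H2}, \ref{remark_univ_central}; you simply spell out those ingredients (perfectness of $\eta_\alpha$ via the $H\mathbb Z$-map property and $\gamma_\alpha(EG)\subseteq[EG,EG]$, then the identification of $T_{\alpha+1}G$ with the universal relative central extension through the universal-coefficient description). Nothing in your expansion deviates from, or adds a gap to, the paper's intended argument.
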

\begin{proof}
It follows from  \cite[3.2]{Bousfield}, \cite[3.4]{Bousfield}, Proposition  \ref{proposition_univ_central} and Remarks \ref{remark_H2}, \ref{remark_univ_central}.
\end{proof}

\section{Homology of stem-extensions}
Consider a central extension of groups
\begin{equation}\label{stem}
1\to N\to G\to Q\to 1
\end{equation}
It is shown in \cite{Eckmann1} that, there is a natural long exact
sequence
\begin{equation}\label{delta1}
H_3(G)\to H_3(Q)\rightarrow (G_{ab}\otimes N)/U\to H_2(G)\to
H_2(Q)\buildrel{\beta}\over\rightarrow N\to H_1(G)\to H_1(Q)\to 0
\end{equation}
where $U$ is the image of the natural map
$$
H_4K(N,2)\to G_{ab}\otimes N.
$$
Here $H_4K(N,2)$ is the forth homology of the Eilenberg-MacLane
space $K(N,2)$ which can be described as the Whitehead quadratic
functor $$ H_4K(N,2)=\Gamma^2N.
$$

A central extension (\ref{stem}) is called a {\it stem-extension}
if $N\subseteq [G,G]$. For a stem extension (\ref{stem}), the
exact sequence (\ref{delta1}) has the form (see \cite{Eckmann},
\cite{Eckmann1})
\begin{equation}\label{delta}
H_3(G)\to H_3(Q)\buildrel{\delta}\over\rightarrow G_{ab}\otimes
N\to H_2(G)\to H_2(Q)\buildrel{\beta}\over\rightarrow N\to 0
\end{equation}
The map $\delta$ is given as follows. We present (\ref{stem}) in
the form
$$
1\to S/R\to F/R\to F/S\to 1,
$$
for a free group $F$ and normal subgroups $R,S$ with $R\subset S,$
$[F,S]\subseteq R.$ Then the map $\delta$ is induced by the
natural epimorphism $S_{ab}\to S/R$:
$$
H_3(Q)=H_1(F/S, S_{ab})\to H_1(F/S, S/R)=Q_{ab}\otimes
N=G_{ab}\otimes N.
$$
The isomorphism $H_1(F/S, S/R)=Q_{ab}\otimes N$ follows from the
triviality of $F/S$-action on $S/R$.

 In this section we consider the class of metabelian
groups of the form $Q=M\rtimes C$, where $C$ is an infinite cyclic
group and $M$ a $\mathbb Z[C]$-module. It follows immediately from
the homology spectral sequence that, for any $n\geq 2$, there is a
short exact sequence
$$
0\to H_0(C, H_n(M))\to H_n(Q)\to H_1(C, H_{n-1}(M))\to 0
$$
which can be presented in terms of (co)invariants as
$$
0\to H_n(M)_C\to H_n(Q)\to H_n(M)^C\to 0.
$$
Composing the last epimorphism with $H_{n-1}(M)^C\to
H_{n-1}(M)_C\hookrightarrow H_{n-1}(Q)$, we get a natural (in the
category of $\mathbb Z[C]$-modules) map
$$
\alpha_n: H_n(Q)\to H_{n-1}(Q).
$$
In the next proposition we will construct a composite map
\begin{equation}\label{alpha3}
\alpha_3': H_3(Q)\to H_1(C, \Lambda^2(M))\hookrightarrow
\Lambda^2(M)\twoheadrightarrow \Lambda^2(M)_C\hookrightarrow
H_2(Q)
\end{equation}
using group-theoretical tools, without spectral sequence.
Probably, $\alpha_3'$ coincides with $\alpha_3$ up to isomorphism,
but we will not use this comparison later.
\begin{Proposition}\label{alpha33} For a stem extension (\ref{stem}) of a group
$Q=M\rtimes C$, there exists a map $\alpha_3': H_3(Q)\to H_2(Q)$,
given as a composition (\ref{alpha3}), such that the following
diagram is commutative
$$
\xyma{H_3(Q)\ar@{->}[d]^\delta \ar@{->}[r]^{\alpha_3'} & H_2(Q)\ar@{->}[d]^\beta\\
G_{ab}\otimes N\ar@{->}[r] & N}
$$
where the lower horizontal map is the projection
$$
G_{ab}\otimes N\to C\otimes N=N.
$$
\end{Proposition}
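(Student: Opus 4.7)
The plan is to reduce both sides of the asserted diagram to the same commutator-lift map $\Lambda^2(M)^C \to N$, $m \wedge m' \mapsto [\tilde m, \tilde m']$, where $\tilde m, \tilde m' \in G$ are any lifts (well-defined in $N$ because $N$ is central in $G$).

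First I would unpack $\alpha_3'$. Since $C$ is infinite cyclic, the Lyndon-Hochschild-Serre spectral sequence for $1 \to M \to Q \to C \to 1$ collapses to the short exact sequences $0 \to H_n(M)_C \to H_n(Q) \to H_{n-1}(M)^C \to 0$ that already appear in the preceding paragraph, so the map $H_3(Q) \to H_1(C, \Lambda^2(M)) = \Lambda^2(M)^C$ in (\ref{alpha3}) is the edge surjection for $n = 3$, the map $\Lambda^2(M)_C \hookrightarrow H_2(Q)$ is the edge injection for $n = 2$, and the middle map is the standard norm $\Lambda^2(M)^C \hookrightarrow \Lambda^2(M) \twoheadrightarrow \Lambda^2(M)_C$. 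The composite $\Lambda^2(M) = H_2(M) \hookrightarrow H_2(Q) \xrightarrow{\beta} N$ has the classical commutator description $m \wedge m' \mapsto [\tilde m, \tilde m']$; centrality of $N$ then yields $[\tilde{t \cdot m}, \tilde{t \cdot m'}] = \tilde t [\tilde m, \tilde m'] \tilde t^{-1} = [\tilde m, \tilde m']$, forcing this map to factor through $\Lambda^2(M)_C$. Consequently $(\beta \circ \alpha_3')(x) = \sum [\tilde m_i, \tilde m'_i]$ for any $x \in H_3(Q)$ whose edge image is $\sum m_i \wedge m'_i \in \Lambda^2(M)^C$.

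To compute $\pi \circ \delta$ I would use the Hopf presentation $Q = F/S$, $G = F/R$ employed in the definition of $\delta$: the map $\delta$ is induced on $H_1(Q, -)$ by the epimorphism $S_{ab} \twoheadrightarrow S/R = N$, and the projection $\pi \colon Q_{ab} \otimes N = (M_C \oplus C) \otimes N \to C \otimes N = N$ agrees with the edge map $H_1(Q, S/R) \to H_1(C, S/R) = N$ of the semidirect-product spectral sequence. By naturality, $\pi \circ \delta$ factors as $H_1(Q, S_{ab}) \to H_1(C, S_{ab}) \to H_1(C, S/R) = N$. The task is then to identify this composite with the commutator-lift map, via the same edge surjection $H_3(Q) = H_1(Q, S_{ab}) \twoheadrightarrow \Lambda^2(M)^C$.

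The main obstacle lies in this final identification. A potentially clean way to carry it out is to reduce to the universal stem extension by naturality of $\delta$, $\beta$, and $\alpha_3'$ in the stem extension: there $N = H_2(Q)$ and $\beta = \mathrm{id}$, so the claim collapses to the single equality $\pi \circ \delta = \alpha_3'$ of endomorphisms of $H_2(Q)$, which can then be verified at the level of bar-resolution cycles using the explicit $d^2$-formula for the central-extension spectral sequence of $1 \to N \to G \to Q \to 1$ applied to a $3$-cycle representative of $x$ whose projection to $\Lambda^2(M)^C$ is prescribed. Composing back with $\beta$ recovers the general case.
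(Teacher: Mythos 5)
Your reductions at the start are sound: the commutator description of the composite $H_2(M)\to H_2(Q)\xrightarrow{\beta}N$ for a central extension is classical, centrality does force it to factor through $\Lambda^2(M)_C$, and the reduction of the target square to a stem cover (where $\beta$ is an isomorphism) via naturality of the sequence (\ref{delta}) is legitimate, granted the classical fact that every stem extension is an image of a stem cover. The gap is that the step you yourself flag as ``the main obstacle'' is exactly the content of the proposition, and your plan for it is not a proof. You must identify the composite $H_1(Q,S_{ab})\to H_1(Q,S/R)\to C\otimes N=N$ (the free-presentation definition of $\pi\circ\delta$) with the commutator-lift map precomposed with the edge surjection $H_3(Q)\twoheadrightarrow \Lambda^2(M)^C$ of the Lyndon--Hochschild--Serre spectral sequence of $M\rtimes C$. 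Saying this ``can be verified at the level of bar-resolution cycles using the explicit $d^2$-formula'' conflates two different constructions: $\delta$ in (\ref{delta}) is defined via a free presentation (it is not simply a quotable $d^2$ of the spectral sequence of $N\mono G\epi Q$; at $H_3$ both $d_2\colon E^2_{3,0}\to E^2_{1,1}$ and $d_3$ intervene), and an explicit cycle-level formula for the edge map $H_3(Q)\to H_2(M)^C$ of the other spectral sequence would also have to be produced and matched. None of this is carried out, and it is precisely the nontrivial comparison. Note also that by pinning the first arrow of (\ref{alpha3}) to be the spectral-sequence edge map you make the task strictly harder than the statement demands: the proposition only asserts existence of \emph{some} map $H_3(Q)\to H_1(C,\Lambda^2(M))$ making the square commute, and the paper explicitly declines to compare its map with the edge map $\alpha_3$.

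For contrast, the paper avoids spectral sequences altogether: it chooses a free group $F$ with $R\subset S\subset T$, $F/T=C$, $F/S=Q$, $F/R=G$, writes $H_3(Q)=H_1(F/S,S_{ab})$, and builds the diagram (\ref{dia22}) whose outer square is the assertion. The map into $H_1(C,\Lambda^2 M)=H_1(F/T,H_2(T/S))$ is obtained from the coefficient surjection $S_{ab}\to (S\cap T')/[S,T]$, and the only nonformal inputs are that $H_2(C,-)=0$ and $H_3(C)=H_1(F/T,T_{ab})=0$, which give the isomorphism $H_1(F/T,(S\cap T')/S')\cong H_1(F/T,S_{ab})$; commutativity then follows from naturality of the square comparing $H_1(F/S,-)$ and $H_1(F/T,-)$ with coefficients $S_{ab}\to S/R$ and the identification of $H_1(F/T,-)$ with $C$-invariants. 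If you want to salvage your route, you would need to either supply the explicit cycle-level formulas you allude to, or replace that step by a presentation-theoretic argument of the paper's kind.
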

\begin{proof}
We choose a free group $F$ with normal subgroups $R\subset
S\subset T$ such that
$$
F/T=C,\ F/S=Q, F/R=G.
$$
In the above notation, we get $M=T/S, N=S/R$. The proof follows
from the direct analysis of the following diagram, which corners
are exactly the roots of the diagram given in proposition:
\begin{equation}\label{dia22}
\xyma{H_3(Q)\ar@/^80pt/[rrrd]_{\alpha_3'}_{\;}="d"\ar@{=}[d] & H_1(F/T, \frac{S\cap T'}{S'})\ar@{->}[r]\ar@{=}[d] & H_1(F/T, \frac{S\cap T'}{[S,T]})\ar@{>->}[d]\\
H_1(F/S, S_{ab})\ar@{->}[d]^\delta \ar@{->}[r] & H_1(F/T, S_{ab})
& H_2(T/S)\ar@{->}[r] & H_2(Q)\ar@{->}[d]^{\beta}\\ H_1(F/S,
S/R)\ar@{=}[r] & (F/S)_{ab}\otimes S/R\ar@{->}[rr] & & S/R}
\end{equation}
All arrows of this diagram are natural. We will make comments only
about two maps from the diagram, other maps are obviously defined.
The map
$$
H_1(F/T, \frac{S\cap T'}{S'})\to H_1(F/T, S_{ab})
$$
is an isomorphism since
$$
H_1(F/T, \frac{S}{S\cap T'})\hookrightarrow H_1(F/T,
T_{ab})=H_3(F/T)=0.
$$
The vertical map in the diagram $H_1(F/T, \frac{S\cap
T'}{[S,T]})=H_1(F/T, H_2(S/T))\hookrightarrow H_2(S/T)$ follows
from the identification of $H_1$ of a cyclic group with
invariants.

The commutativity of (\ref{dia22}) follows from the commutativity
of the natural square
$$
\xyma{H_1(F/S, S_{ab}) \ar@{->>}[r]\ar@{->}[d] & H_1(F/T, S_{ab})\ar@{->}[d]\\
H_1(F/S, S/R)\ar@{->>}[r] & H_1(F/T, S/R)}
$$
and identification of $H_1(F/T, -)$ with invariants of the
$F/T$-action.
\end{proof}

\section{Tame modules and completions}\label{tamemodules}

Throughout the section $C$ denotes an infinite cyclic group. If
$R$ is a commutative ring, $R[C]$ denotes the group algebra over
$R$. We use only $R=\mathbb Z,\mathbb Q,\mathbb C.$ The
augmentation ideal is denoted by $I.$ If $t$ is one of two
generators of $C$, $R[C]=R[t,t^{-1}]$ and $I=(t-1).$

\subsection{Finite dimensional $K[C]$-modules}

Let $K$ be a field (we use only $K=\mathbb Q,\mathbb C$), $V$
be a right $K[C]$-module such that ${\sf dim}_K\:V<\infty$. If we
fix a generator $t\in C$ we obtain a linear map $\cdot t : V\to V$
that defines the module structure. We denote the linear map by
$a_V\in {\sf GL}(V).$ The characteristic and minimal polynomials
of $a_V$ are denoted by $\chi_V$ and $\mu_V$ respectively. These
polynomials depend of the choice of $t\in C.$ Note that for any
such modules $V$ and $U$ we have

\begin{equation}\label{eq_chi_mu_sum}
\chi_{V\oplus U}=\chi_V \chi_U, \hspace{1cm} \mu_{V\oplus U}={\sf
lcm}(\mu_V,\mu_U).
\end{equation}

\begin{Lemma}\label{Lemma_structure_theorem_fields} Let $V$ be a right $K[C]$-module such that ${\sf dim}_K\:V<\infty$ and $t\in C$ be a generator.
Then there exist distinct irreducible monic polynomials $f_1,\dots,f_l\in K[x]$ and an isomorphism
$$V\cong V_1\oplus \dots \oplus V_l,$$
where
$$V_i= K[C]/(f_i^{m_{i,1}}(t)) \oplus \dots \oplus K[C]/(f_i^{m_{i,l_i}}(t)),$$
and $m_{i,1}\geq m_{i,2} \geq \dots \geq m_{i,l_i}\geq 1.$
 Moreover, if we set $m_i=\sum_j m_{i,j},$ then $\chi_V=f_1^{m_1}\dots f_l^{m_l}$ and $\mu_V=f_1^{m_{1,1}}\dots f_l^{m_{l,1}}.$
\end{Lemma}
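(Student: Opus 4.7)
The plan is to reduce the assertion to the classical structure theorem for finitely generated modules over a principal ideal domain, applied to $K[x]$, and then transfer the decomposition along the localization $K[x] \hookrightarrow K[x,x^{-1}] = K[C]$.

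First I would view $V$ as a $K[x]$-module by letting $x$ act as the invertible operator $a_V = \cdot t$. Since $\dim_K V < \infty$, the module $V$ is a finitely generated torsion $K[x]$-module (every element is annihilated by $\chi_V$, and $V$ itself is finitely generated over the Noetherian ring $K[x]$ simply because it is a finite-dimensional $K$-vector space). Applying the primary decomposition form of the structure theorem for finitely generated modules over the PID $K[x]$, I obtain an isomorphism of $K[x]$-modules
$$
V \;\cong\; \bigoplus_{i=1}^{l} \bigoplus_{j=1}^{l_i} K[x]/(f_i^{m_{i,j}}(x)),
$$
where $f_1,\dots,f_l \in K[x]$ are distinct monic irreducibles and the exponents can be listed in the required decreasing order $m_{i,1} \geq \cdots \geq m_{i,l_i} \geq 1$.

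Next I would transfer this from $K[x]$-modules to $K[C]$-modules. The key observation is that $t$ acts invertibly on $V$, so none of the irreducible factors $f_i$ can equal $x$; equivalently, $f_i(0) \neq 0$ for every $i$. For any $f \in K[x]$ with $f(0) \neq 0$, the image of $x$ in $K[x]/(f^m(x))$ is a unit (one can solve $x \cdot g \equiv 1$ modulo $f^m$ using the nonzero constant term), so the inclusion $K[x] \hookrightarrow K[x,x^{-1}]$ induces an isomorphism
$$
K[x]/(f^m(x)) \;\xrightarrow{\;\cong\;}\; K[C]/(f^m(t)).
$$
Combining these isomorphisms across all $(i,j)$ upgrades the previous decomposition to an isomorphism of $K[C]$-modules of the required form $V \cong V_1 \oplus \cdots \oplus V_l$.

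Finally, I would compute the characteristic and minimal polynomials. For a cyclic module $K[x]/(f^m(x))$ the operator $x$ acts with characteristic polynomial $f^m$ (its $K$-dimension is $m \deg f$ and the matrix is a companion block) and with minimal polynomial $f^m$ (since the annihilator of the generator is precisely $(f^m)$). Using \eqref{eq_chi_mu_sum} iteratively on the direct sum decomposition yields
$$
\chi_{V_i} = \prod_{j=1}^{l_i} f_i^{m_{i,j}} = f_i^{m_i}, \qquad \mu_{V_i} = \mathrm{lcm}_j\bigl(f_i^{m_{i,j}}\bigr) = f_i^{m_{i,1}},
$$
and then, since the $f_i$ are pairwise coprime, a further application gives $\chi_V = \prod_i f_i^{m_i}$ and $\mu_V = \prod_i f_i^{m_{i,1}}$, as required.

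There is no serious obstacle here; this is essentially a packaging statement. The only point that requires a moment's care is the passage from $K[x]$ to $K[C] = K[x,x^{-1}]$, which works smoothly precisely because the invertibility of the $C$-action forces every irreducible factor to have nonzero constant term.
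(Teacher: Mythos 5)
Your proof is correct and follows essentially the same route as the paper: both rest on the structure theorem for finitely generated modules over a principal ideal domain together with the formulas \eqref{eq_chi_mu_sum} for $\chi$ and $\mu$ of a direct sum. The only cosmetic difference is that you apply the structure theorem over $K[x]$ and then transfer along the localization $K[x]\hookrightarrow K[C]$ (using that $t$ acts invertibly, so no factor $x$ occurs), whereas the paper observes that $K[C]=K[t,t^{-1}]$ is itself a PID, being a localization of $K[t]$, and decomposes $V$ directly over $K[C]$.
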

\begin{proof}
Note that $K[C]=K[t,t^{-1}]$ is the polynomial ring $K[t]$
localised at the element $t.$ Then it is a principal ideal domain.
Then the isomorphism follows from the structure theorem for
finitely generated modules over a principal ideal domain. The
statement about $\chi_V$ and $\mu_V$ follows from the fact that
both  characteristic and minimal polynomials of
$K[C]/(f_i^{m_{i,j}}(t))$ equal to $f_i^{m_{i,j}}$ and the
formulas \eqref{eq_chi_mu_sum}.
\end{proof}

Let $R$ be a commutative ring and $t$ be a generator of $C$. For
an $R[C]$-module $M$ and a polynomial $f\in R[x]$ we set
$$M^f=\{m\in M\mid m\cdot f(t)=0\}.$$
Note that  $M^C=M^{x-1}$. It is easy to see that for any $f,g\in
R[x]$ we have
\begin{equation}\label{eq_Mfg}
(M/M^f)^g=M^{fg}/M^f.
\end{equation}

\begin{Corollary}\label{Corollary_structure_over_field} Let $V$ be a right $K[C]$-module such that ${\sf dim}_K\:V<\infty$ and $t\in C$ be a generator. Assume that $\mu_V=f_1^{m_1} \dots f_l^{m_l},$ where $f_1,\dots,f_l\in K[x]$ are distinct irreducible monic polynomials. Consider the filtration
$$0=F_0V \subset F_1V \subset \dots \subset F_lV =V  $$
given by $F_iV=V^{f_1^{m_1}\dots f_i^{m_i}}.$ Then $$V=\bigoplus_i
F_iV/F_{i-1}V,\ \ \ \mu_{F_iV/F_{j-1}}=f_j^{m_j}\dots f_i^{m_i}$$
and $F_iV/F_{j-1}V=(V/F_{j-1}V)^{f_j^{m_j}\dots f_i^{m_i}}.$
\end{Corollary}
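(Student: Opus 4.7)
The plan is to deduce everything from Lemma \ref{Lemma_structure_theorem_fields}, which provides a primary decomposition $V\cong V_1\oplus\cdots\oplus V_l$ with $\mu_{V_i}=f_i^{m_i}$ (so $m_i$ in the corollary equals $m_{i,1}$ in the notation of the lemma). The whole corollary will follow once one recognises that the filtration $\{F_iV\}$ coincides with the partial-sum filtration of this primary decomposition.

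First I would verify the identification $V^{f_1^{m_1}\cdots f_i^{m_i}}=V_1\oplus\cdots\oplus V_i$. The inclusion $\supseteq$ is immediate because $f_1^{m_1}\cdots f_i^{m_i}$ is a multiple of $\mu_{V_k}=f_k^{m_k}$ for every $k\leq i$, and hence annihilates $V_k$. For $\subseteq$, note that $f_1^{m_1}\cdots f_i^{m_i}$ is coprime to $f_k^{m_k}$ for $k>i$ since $f_1,\dots,f_l$ are distinct monic irreducibles; therefore $f_1^{m_1}\cdots f_i^{m_i}(t)$ acts invertibly on each such $V_k$, so decomposing $v\in V^{f_1^{m_1}\cdots f_i^{m_i}}$ as $v=\sum v_k$ with $v_k\in V_k$ forces $v_k=0$ for all $k>i$. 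This gives $F_iV=V_1\oplus\cdots\oplus V_i$.

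The remaining claims are now formal. The identification $F_iV/F_{i-1}V\cong V_i$ yields $V\cong\bigoplus_i F_iV/F_{i-1}V$. More generally $F_iV/F_{j-1}V\cong V_j\oplus\cdots\oplus V_i$, and by \eqref{eq_chi_mu_sum} together with pairwise coprimality of $f_j,\dots,f_i$,
$$\mu_{F_iV/F_{j-1}V}={\sf lcm}(f_j^{m_j},\dots,f_i^{m_i})=f_j^{m_j}\cdots f_i^{m_i}.$$
Finally, the identification $F_iV/F_{j-1}V=(V/F_{j-1}V)^{f_j^{m_j}\cdots f_i^{m_i}}$ is a direct application of \eqref{eq_Mfg} with $f=f_1^{m_1}\cdots f_{j-1}^{m_{j-1}}$ and $g=f_j^{m_j}\cdots f_i^{m_i}$. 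There is no real obstacle here: the only substantive step is the coprimality argument identifying $F_iV$ with a partial sum of primary components, and the rest is bookkeeping on top of Lemma \ref{Lemma_structure_theorem_fields}.
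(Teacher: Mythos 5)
Your proposal is correct and follows essentially the same route as the paper: the paper's proof simply states that in the notation of Lemma \ref{Lemma_structure_theorem_fields} one has $F_iV=V_1\oplus\dots\oplus V_i$ and that the assertion follows, which is exactly your identification, with your coprimality argument and the appeal to \eqref{eq_chi_mu_sum} and \eqref{eq_Mfg} supplying the details the paper leaves implicit.
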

\begin{proof}
In the notation of Lemma \ref{Lemma_structure_theorem_fields} we
obtain $F_iV=V_1\oplus \dots \oplus V_i.$ The assertion follows.
\end{proof}

\subsection{Tame $\mathbb Z[C]$-modules}

The {\it rank} of an abelian group $A$ is ${\sf dim}_{\mathbb
Q}(A\otimes \mathbb Q).$ The torsion subgroup of $A$ is denoted by
${\sf tor}(A).$ The following statement seems to be well known but
we can not find a reference, so we give it with a proof.

\begin{Lemma} Let $A$ be a torsion free abelian group of finite rank and $B$ be a finite abelian group. Then $A\otimes B$ is finite.
\end{Lemma}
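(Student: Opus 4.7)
The plan is to reduce the statement to the prime-power case $B=\mathbb Z/p^a$ and then to show that $A/p^aA$ is finite. Since the tensor product distributes over finite direct sums and $B$ decomposes (via a cyclic decomposition followed by CRT) into a finite direct sum of groups of the form $\mathbb Z/p^a$, it is enough to prove that $A\otimes \mathbb Z/p^a\cong A/p^aA$ is finite for each prime $p$ and each $a\geq 1$.

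For the base case $a=1$, I would bound $\dim_{\mathbb F_p}(A/pA)$ by the rank $n$ of $A$. The embedding $A\hookrightarrow A\otimes \mathbb Q\cong \mathbb Q^n$ makes any $n+1$ elements $a_1,\dots,a_{n+1}\in A$ $\mathbb Q$-linearly dependent; clearing denominators and dividing out the gcd of the resulting integer coefficients produces a relation $\sum c_i a_i=0$ in $A$ with $c_i\in\mathbb Z$ coprime, and reducing mod $p$ yields a nontrivial $\mathbb F_p$-linear relation on the images $\bar a_i\in A/pA$. So any $n+1$ elements of $A/pA$ are $\mathbb F_p$-linearly dependent, giving $|A/pA|\leq p^n$.

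To pass from $a=1$ to arbitrary $a\geq 1$, I would use the short exact sequence $0\to \mathbb Z/p\xrightarrow{p^{a-1}} \mathbb Z/p^a\to \mathbb Z/p^{a-1}\to 0$. Tensoring with $A$ over $\mathbb Z$, and using that $\operatorname{Tor}_1^{\mathbb Z}(A,\mathbb Z/p^{a-1})=A[p^{a-1}]=0$ by torsion-freeness of $A$, yields the short exact sequence
$$0\longrightarrow A/pA\longrightarrow A/p^aA\longrightarrow A/p^{a-1}A\longrightarrow 0,$$
so an induction on $a$ gives $|A/p^aA|\leq p^{an}$. The finite rank hypothesis enters essentially only in the base case, through the embedding $A\hookrightarrow \mathbb Q^n$ and the clearing-of-denominators step; the prime-power bound then follows formally from torsion-freeness, and the general case from the direct sum decomposition of $B$.
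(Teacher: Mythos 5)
Your proof is correct, but it takes a genuinely different route from the paper. The paper quotes Fuchs's theory of $p$-basic subgroups: it picks a $p$-basic subgroup $A'\leq A$, so $A'\cong \mathbb Z^r$ with $r$ the rank and $A/A'$ $p$-divisible, whence $(A/A')\otimes \mathbb Z/p^k=0$ and right-exactness of the tensor product gives an epimorphism $(\mathbb Z/p^k)^r\cong A'\otimes\mathbb Z/p^k\twoheadrightarrow A\otimes\mathbb Z/p^k$, settling the prime-power case in one stroke. You instead bound $\dim_{\mathbb F_p}(A/pA)$ by the rank $n$ directly, via the embedding $A\hookrightarrow A\otimes\mathbb Q\cong\mathbb Q^n$ and the clearing-denominators/gcd argument (which is sound: a primitive integral relation survives reduction mod $p$), and then climb from $\mathbb Z/p$ to $\mathbb Z/p^a$ by induction using the sequence $0\to A/pA\to A/p^aA\to A/p^{a-1}A\to 0$, whose left exactness you correctly justify by $\operatorname{Tor}_1^{\mathbb Z}(A,\mathbb Z/p^{a-1})=A[p^{a-1}]=0$. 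What each approach buys: the paper's argument is shorter once one grants the existence and structure of $p$-basic subgroups (a nontrivial citation), and it handles $\mathbb Z/p^k$ without induction; yours is completely self-contained, avoids any structure theory of infinite abelian groups beyond the definition of rank, and gives the explicit bound $|A\otimes\mathbb Z/p^a|\leq p^{an}$, hence $|A\otimes B|\leq |B|^{\,n}$ after the CRT decomposition. Both reductions to the cyclic prime-power case are the same standard first step.
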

\begin{proof}
It is sufficient to prove that $A\otimes \mathbb Z/p^k$ is finite
for any prime $p$ and $k\geq 1.$ Consider a $p$-basic subgroup
$A'$ of $A$ (see \cite[VI]{Fuchs}). Then $A'\cong \mathbb Z^r,$
where $r$ is the rank of $A$ and $A/A'$ is $p$-divisible. Thus
$(A/A') \otimes \mathbb Z/p^k=0.$ Hence the map $(\mathbb
Z/p^k)^r\cong A'\otimes \mathbb Z/p^k \epi A\otimes \mathbb Z/p^k$
is an epimorphism.
\end{proof}

A finitely generated $\mathbb Z[C]$-module $M$ is said to be {\it
tame} if the group $M\rtimes C$ is finitely presented.

\begin{Proposition}[{Theorem C of \cite{Bieri-Strebel_78}}]\label{Proposition_main_tame} Let $M$ be a finitely generated $\mathbb Z[C]$-module. Then $M$ is tame if and only if the following properties hold:
\begin{itemize}
\item ${\sf tor}(M)$ is finite; \item the rank of $M$ is finite;
\item there is a generator $t$ of $C$ such that $\chi_{M\otimes
\mathbb Q}$ is integral.
\end{itemize}
\end{Proposition}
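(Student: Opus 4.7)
The plan is to go via the Bieri--Strebel geometric invariant. For $C\cong \mathbb{Z}$ with generator $t$, the character sphere $S(C)=S^0$ has two points $\chi_{\pm}$, and I put $\chi_+\in \Sigma_M$ (resp.\ $\chi_-\in \Sigma_M$) when $M$ is finitely generated as a module over $\mathbb{Z}[t]$ (resp.\ $\mathbb{Z}[t^{-1}]$). The central reduction I would first establish is the geometric criterion
$$M\rtimes C \text{ is finitely presented} \iff \Sigma_M\neq \emptyset,$$
which is the main content of Theorem~C in \cite{Bieri-Strebel_78}. Its easier direction writes down a finite presentation of $M\rtimes C$ starting from a finite $\mathbb{Z}[t^{\pm 1}]$-generating set of $M$ together with a finite set of $\mathbb{Z}[t^{\pm 1}]$-module relations (available by Noetherianity of $\mathbb{Z}[t^{\pm 1}]$); crucially, the commutativity relations of $M$ follow from the remaining relations, so one need not add $[m_i,t^a m_j t^{-a}]=1$ for all $a$ separately. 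The harder direction analyses an arbitrary finite presentation of $M\rtimes C$ and extracts a finite $\mathbb{Z}[t^{\pm 1}]$-generating set of $M$; this is the genuine technical heart.

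Granted the criterion, the forward direction of the Proposition becomes module algebra. Assume, without loss of generality, that $M$ is finitely generated over $\mathbb{Z}[t]$. Then $V := M\otimes \mathbb{Q}$ is finitely generated over $\mathbb{Q}[t]$ with $t$ acting invertibly; since $t$ is not a unit in $\mathbb{Q}[t]$, $V$ admits no free $\mathbb{Q}[t]$-summand and is therefore finite-dimensional, which gives finite rank. The subgroup ${\sf tor}(M)$ is finitely generated over the Noetherian ring $\mathbb{Z}[t]$ and annihilated by some $n\neq 0$, so it is a finitely generated $(\mathbb{Z}/n)[t]$-module on which $t$ acts invertibly; Cayley--Hamilton applied to the $\mathbb{Z}[t]$-linear endomorphism $t^{-1}$ forces ${\sf tor}(M)$ to be finitely generated as an abelian group, hence finite. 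For integrality, Cayley--Hamilton applied to $t^{-1}$ on the finitely generated $\mathbb{Z}[t]$-module $M$ yields a monic polynomial identity for $t^{-1}$ with coefficients in $\mathbb{Z}[t]$; clearing $t^{-k}$ by multiplication by $t^k$ produces a nonzero integer polynomial annihilating $t$ on $V$, and by Gauss's lemma the monic minimal polynomial $\mu_V\in \mathbb{Q}[x]$ (which divides this integer polynomial) must itself lie in $\mathbb{Z}[x]$, hence so does $\chi_V$, after picking the generator $t^{-1}$ of $C$ (``finite generation over $\mathbb{Z}[t]$'' gives integrality of $\chi_{t^{-1}}$).

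For the reverse direction, fix a generator $t$ with $\chi_V\in \mathbb{Z}[x]$. Let $f_1,\dots, f_r$ be $\mathbb{Z}[C]$-module generators of $M$, with images $\bar f_i$ in $M/{\sf tor}(M)\subset V$. Set $L:= \mathbb{Z}[t]\cdot \{\bar f_i\}\subset M/{\sf tor}(M)$; Cayley--Hamilton with $\chi_V\in \mathbb{Z}[x]$ says the finite set $\{t^j \bar f_i : 0\leq j<\deg \chi_V\}$ already $\mathbb{Z}$-generates $L$, so $L$ is a finitely generated abelian group and thus trivially a finitely generated $\mathbb{Z}[t^{-1}]$-module. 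Because $\mathbb{Z}[t^{-1}]\cdot \mathbb{Z}[t]=\mathbb{Z}[C]$ and the $\bar f_i$ generate $M/{\sf tor}(M)$ as a $\mathbb{Z}[C]$-module, $\mathbb{Z}[t^{-1}]\cdot L = M/{\sf tor}(M)$, so $M/{\sf tor}(M)$ is finitely generated over $\mathbb{Z}[t^{-1}]$. Combined with finiteness of ${\sf tor}(M)$, this gives $M$ finitely generated over $\mathbb{Z}[t^{-1}]$, so $\chi_-\in \Sigma_M$ and $M\rtimes C$ is finitely presented by the criterion.

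The main obstacle is the $(\Rightarrow)$ direction of the geometric criterion: passing from an abstract finite presentation to a finite $\mathbb{Z}[t^{\pm 1}]$-generating set of $M$ requires either a Brown-type $2$-complex argument or the explicit pushout/HNN manipulations of \cite{Bieri-Strebel_78}. A secondary subtlety, threading through both directions, is the ``opposite'' pairing between $\chi_t$-integrality and $\mathbb{Z}[t^{\pm 1}]$-finite generation: for $BS(1,n)=\mathbb{Z}[1/n]\rtimes C$ with $t$ acting as $\cdot n$, one has $\chi_t=x-n\in \mathbb{Z}[x]$ but $\mathbb{Z}[1/n]$ is finitely generated as a module only over $\mathbb{Z}[t^{-1}]$, which is why the generator must be chosen carefully in both directions above.
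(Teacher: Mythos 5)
The paper offers no argument for this Proposition at all: it is imported verbatim as Theorem~C of \cite{Bieri-Strebel_78}. Your proposal instead imports Bieri--Strebel in a different formulation (finite presentability of $M\rtimes C$ is equivalent to $M$ being finitely generated over $\mathbb Z[t]$ or over $\mathbb Z[t^{-1}]$, i.e.\ $\Sigma_M\neq\emptyset$) and then supplies the module-theoretic translation to the three stated conditions. That translation is essentially correct and is genuinely more informative than the bare citation: the rank argument (no free $\mathbb Q[t]$-summand since $t$ acts invertibly), the determinant-trick argument for finiteness of ${\sf tor}(M)$, and the converse (Cayley--Hamilton with $\chi_V\in\mathbb Z[x]$ makes $\mathbb Z[t]\{\bar f_i\}$ a finitely generated abelian group, hence $M/{\sf tor}(M)$ and then $M$ are finitely generated over $\mathbb Z[t^{-1}]$) all go through, and your closing remark correctly isolates the ``opposite generator'' pairing that the bare statement hides. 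The genuinely hard group-theoretic content (extracting one-sided finite generation from an arbitrary finite presentation) is deferred to \cite{Bieri-Strebel_78}, exactly as the paper defers the whole statement, so the logical status of your proof matches the paper's.

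Two steps should be tightened. First, the Gauss's-lemma sentence is wrong as literally written: the integer polynomial $g$ you obtain by clearing $t^{-k}$ is not monic, and a monic rational divisor of a non-monic integer polynomial need not be integral; moreover the intended conclusion cannot hold for $t$ itself ($M=\mathbb Z[1/2]$ with $t$ acting by $1/2$ is cyclic over $\mathbb Z[t]$, yet $\mu_{V,t}=x-\tfrac12$). The correct step, which your parenthetical gestures at, is that $g$ has constant term $1$, so its reciprocal polynomial $x^{\deg g}g(1/x)$ is monic integral and annihilates the action of $t^{-1}$; Gauss then gives $\mu_{V,t^{-1}}\in\mathbb Z[x]$ and hence $\chi_{V,t^{-1}}\in\mathbb Z[x]$, which is the integrality ``for some generator'' required. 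Second, the torsion step needs one more line: Cayley--Hamilton applied to $t^{-1}$ gives $g\in\mathbb Z[t]$ with $g(0)=1$ annihilating ${\sf tor}(M)$, and one must then observe that $(\mathbb Z/n)[t]/(\bar g)$ is a finite ring (reduce modulo each prime $p\mid n$, where $\bar g\neq 0$ because its constant term is a unit, and filter by powers of $p$), so the finitely generated module ${\sf tor}(M)$ over it is finite. A cosmetic point: in your sketch of the ``easier direction'' of the criterion, finitely many $\mathbb Z[t^{\pm1}]$-module relations alone never force the commutation relations (witness $\mathbb Z\wr\mathbb Z$, cyclic with no relations); the finite presentation must exploit one-sided finite generation, e.g.\ via the ascending HNN structure --- but since you defer the criterion wholesale to Bieri--Strebel, this does not affect the logic.
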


\begin{Lemma}[{Lemma 3.4 of \cite{Bieri-Strebel_78}}]\label{Lemma_tensor_product} Let $M$ and $M'$ be tame $\mathbb Z[C]$-modules. Then $M\otimes M'$ is a
 tame $\mathbb Z[C]$-module (with the diagonal action).
\end{Lemma}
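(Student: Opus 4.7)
The plan is to verify the three properties characterising tameness in Proposition \ref{Proposition_main_tame} for $M \otimes M'$: finiteness of torsion, finiteness of rank, and existence of a generator $t \in C$ with respect to which the characteristic polynomial on $(M \otimes M') \otimes \mathbb Q$ is integral monic (together with the implicit requirement that $M \otimes M'$ be finitely generated as a $\mathbb Z[C]$-module under the diagonal action). The rank is immediate, since $\otimes \mathbb Q$ commutes with $\otimes_{\mathbb Z}$, giving
$$(M \otimes M') \otimes \mathbb Q \ \cong\ (M \otimes \mathbb Q) \otimes_{\mathbb Q} (M' \otimes \mathbb Q),$$
whose $\mathbb Q$-dimension is the product of the ranks of $M$ and $M'$.

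For the torsion, I would apply the short exact sequence $0 \to {\sf tor}(M) \to M \to M/{\sf tor}(M) \to 0$. Since $M/{\sf tor}(M)$ is torsion-free, hence $\mathbb Z$-flat, tensoring with $M'$ stays exact, and splitting $M'$ the same way reduces the question to four tensor types: ${\sf tor}\otimes{\sf tor}$, ${\sf tor}\otimes\text{free}$, $\text{free}\otimes{\sf tor}$, and $\text{free}\otimes\text{free}$, where ``free'' abbreviates ``torsion-free of finite rank''. The first three are finite (the mixed ones by the preliminary lemma just proved, the purely torsion one trivially), while the fourth embeds into its rationalisation (a $\mathbb Q$-vector space) and is therefore torsion-free. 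Piecing everything back together shows that ${\sf tor}(M\otimes M')$ is finite.

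For the characteristic polynomial, pick a generator $t\in C$ with respect to which both $\chi_{M\otimes\mathbb Q}$ and $\chi_{M'\otimes\mathbb Q}$ lie in $\mathbb Z[x]$; then $t$ acts on $(M\otimes\mathbb Q)\otimes_{\mathbb Q}(M'\otimes\mathbb Q)$ as the Kronecker tensor of the two actions, whose eigenvalues are products $\lambda_i\mu_j$ of algebraic integers, hence algebraic integers themselves. The resulting characteristic polynomial is monic in $\mathbb Q[x]$ with algebraic-integer roots, and therefore lies in $\mathbb Z[x]$. The hard part is twofold: first, ensuring that a \emph{single} generator $t\in C$ simultaneously makes both $\chi_{M\otimes\mathbb Q}$ and $\chi_{M'\otimes\mathbb Q}$ integral (each module a priori prefers its own generator, and there are only two to choose from); and second, verifying that $M\otimes M'$ is finitely generated as a $\mathbb Z[C]$-module under the diagonal action, which is not automatic from finiteness of rank and torsion. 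I would address the latter by invoking the structural description of tame modules (cf.\ Proposition \ref{Proposition_structure_of_tame_modules}) to produce explicit finite generating sets of $M$ and $M'$ whose pairwise tensors generate $M\otimes M'$ over $\mathbb Z[C]$, using the integrality of the action of $t$ to control both the $t$- and $t^{-1}$-orbits.
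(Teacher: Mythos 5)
The parts you actually carry out are fine: the rank computation, the finiteness of ${\sf tor}(M\otimes M')$ via the preceding lemma on $A\otimes B$ with $A$ torsion free of finite rank and $B$ finite, and the eigenvalue argument for integrality of the characteristic polynomial \emph{once a common generator is in hand}. But the two issues you yourself flag as ``the hard part'' are precisely the substance of the lemma, and you leave both unresolved. The first cannot be resolved as stated: two tame modules need not admit a common integral generator, and when they do not, the conclusion can fail outright. Take $M=\mathbb Z[1/2]$ with $t$ acting as multiplication by $2$ and $M'=\mathbb Z[1/2]$ with $t$ acting as multiplication by $1/2$. Both are tame: each of $M\rtimes C$, $M'\rtimes C$ is isomorphic to the finitely presented group $\langle a,s\mid a^{s}=a^2\rangle$. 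Yet $M\otimes M'\cong \mathbb Z[1/2]$ with the \emph{trivial} $C$-action, which is not even finitely generated as a $\mathbb Z[C]$-module, hence not tame. So the lemma must be read with the additional hypothesis that $M$ and $M'$ are tame with respect to the same integral generator $t$ --- which is how it is used in this paper (Propositions \ref{Proposition_omega} and \ref{onekey} both posit a common integral generator). Note also that the paper offers no proof at all, only the citation of Bieri--Strebel, so there is no argument of the authors to compare yours with; the burden of the two hard steps is exactly what the citation is carrying.

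Granting a common integral generator, your integrality step is correct as written, but the finite-generation step still needs a genuine argument: Proposition \ref{Proposition_structure_of_tame_modules} concerns filtrations of torsion-free modules and says nothing about generation of $M\otimes M'$ over $\mathbb Z[C]$. One way to finish is a Cayley--Hamilton reduction. The images of ${\sf tor}(M)\otimes M'$ and $M\otimes {\sf tor}(M')$ are finitely generated over $\mathbb Z[C]$ (e.g.\ ${\sf tor}(M)\otimes M'$ is generated by ${\sf tor}(M)\otimes Y$ for a finite generating set $Y$ of $M'$, since $s\otimes t^jy=t^j(t^{-j}s\otimes y)$ and $t^{-j}s$ stays in the finite set ${\sf tor}(M)$), so one may assume $M,M'$ torsion free; then $\chi_M(t)$ and $\chi_{M'}(t)$ annihilate $M$ and $M'$ themselves, not merely their rationalizations. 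For finite $\mathbb Z[C]$-generating sets $X$ of $M$ and $Y$ of $M'$, the group $M\otimes M'$ is spanned by the elements $t^ix\otimes t^jy=t^j\bigl(t^{i-j}x\otimes y\bigr)$; when $i-j\geq \deg\chi_M$ the relation $\chi_M(t)x=0$ rewrites $t^{i-j}x\otimes y$ in terms of $t^kx\otimes y$ with $0\le k<\deg\chi_M$, and when $i-j<0$ one writes $t^{i-j}x\otimes y=t^{i-j}\bigl(x\otimes t^{j-i}y\bigr)$ and reduces $t^{j-i}y$ using $\chi_{M'}(t)y=0$. Hence the finite set of elements $t^kx\otimes y$ and $x\otimes t^ky$ with $k$ below the respective degrees generates $M\otimes M'$ over $\mathbb Z[C]$, and combined with your rank, torsion and eigenvalue computations, Proposition \ref{Proposition_main_tame} gives tameness. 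As submitted, however, your proposal is a programme whose decisive steps are missing --- and one of them is missing for the good reason that, without the common-generator hypothesis, it cannot be supplied.
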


\begin{Definition} Let $M$ be a tame module. The generator $t\in C$ such that   such that $\chi_{M\otimes \mathbb Q}$ is integral is called an {\it integral generator} for $M$. When we consider a tame module, we always denote by $t$ an integral generator for $M.$ We set $a_M:=t\otimes \mathbb Q : M\otimes \mathbb Q \to M\otimes \mathbb Q,$  and denote  by $\chi_M,\mu_M$ the characteristic and the minimal polynomial of $a_M.$ In other words $\chi_M=\chi_{M\otimes \mathbb Q}$ and $\mu_{M}=\mu_{M\otimes \mathbb Q}.$
 \end{Definition}

\begin{Lemma}\label{Lemma_minimal_polynomial} $\mu_M$ is an integral monic polynomial for any tame $\mathbb Z[C]$-module $M.$
\end{Lemma}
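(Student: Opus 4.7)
The statement asserts two things about $\mu_M\in\mathbb Q[x]$: that it is monic, and that its coefficients lie in $\mathbb Z$. Monicity is immediate from the definition of a minimal polynomial, so the content is the integrality claim. My plan is to deduce integrality from the fact that $\chi_M$ is already known to be integral (this is part of the tameness characterisation, Proposition \ref{Proposition_main_tame}) together with the classical Cayley--Hamilton and Gauss lemma package.

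More concretely, I would argue as follows. By Cayley--Hamilton applied to the $\mathbb Q$-linear endomorphism $a_M$ of the finite-dimensional $\mathbb Q$-vector space $M\otimes \mathbb Q$, the minimal polynomial $\mu_M$ divides the characteristic polynomial $\chi_M$ in $\mathbb Q[x]$. Write
\[
\chi_M = \mu_M \cdot h
\]
in $\mathbb Q[x]$. Since $\chi_M$ is monic (as the characteristic polynomial of a linear map on a finite-dimensional vector space) and $\mu_M$ is monic, the cofactor $h\in\mathbb Q[x]$ is also monic. Now invoke Proposition \ref{Proposition_main_tame}: because $M$ is tame and $t$ is an integral generator, $\chi_M\in\mathbb Z[x]$. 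At this point Gauss's lemma applies: a monic polynomial in $\mathbb Z[x]$ which factors as a product of two monic polynomials in $\mathbb Q[x]$ has both factors in $\mathbb Z[x]$. Hence $\mu_M\in\mathbb Z[x]$, as required.

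The only step requiring any thought is the Gauss-lemma step, and it is completely standard (one clears denominators, writes each factor as a rational multiple of a primitive integer polynomial, and uses that the content of a product is the product of contents, combined with the observation that a monic polynomial in $\mathbb Z[x]$ has content $\pm 1$). No properties of the module $M$ beyond the integrality of $\chi_M$ are needed, so I do not anticipate any real obstacle; the lemma is essentially a repackaging of Proposition \ref{Proposition_main_tame} via elementary commutative algebra. If desired, one could even streamline the argument by noting directly that every eigenvalue of $a_M$ is an algebraic integer (being a root of the integral monic polynomial $\chi_M$), whence every monic factor of $\chi_M$ in $\mathbb Q[x]$ whose roots are among these eigenvalues lies in $\mathbb Z[x]$; but the Gauss-lemma formulation seems cleanest.
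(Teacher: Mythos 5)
Your proof is correct, but it runs along a slightly different track than the paper's. You factor $\chi_M=\mu_M\cdot h$ in $\mathbb Q[x]$ (both factors monic, via Cayley--Hamilton) and then apply Gauss's lemma to the monic integral polynomial $\chi_M$ to conclude $\mu_M\in\mathbb Z[x]$. The paper instead works at the level of roots: it writes $\chi_M=(x-\lambda_1)^{m_1}\cdots(x-\lambda_l)^{m_l}$ and $\mu_M=(x-\lambda_1)^{k_1}\cdots(x-\lambda_l)^{k_l}$ with $1\le k_i\le m_i$, notes that the $\lambda_i$ are algebraic integers because $\chi_M$ is monic and integral, hence the coefficients of $\mu_M$ are algebraic integers; since they are also rational (as $a_M$ is defined over $\mathbb Q$), they are integers. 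The two arguments are close in spirit and of comparable length; yours has the mild advantage of needing no mention of complex roots or the ring of algebraic integers, only the content/primitivity bookkeeping behind Gauss's lemma (which the paper in any case invokes elsewhere, in Proposition \ref{Proposition_structure_of_tame_modules}), while the paper's version is the one you sketch in your closing remark, so your ``streamlined'' alternative is in fact exactly the published proof. Your reliance on Proposition \ref{Proposition_main_tame} for the integrality and monicity of $\chi_M$ is the same input the paper uses, and no step in your argument fails.
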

\begin{proof}
Let $\chi_M=(x-\lambda_1)^{m_1} \dots (x-\lambda_l)^{m_l}$ for
some distinct $\lambda_1,\dots,\lambda_l\in \mathbb C$ and
$m_i\geq 1.$ Then $\mu_M=(x-\lambda_1)^{k_1} \dots
(x-\lambda_l)^{k_l},$ where $1\leq k_i\leq m_i.$ Since $\chi$ is a
monic integral polynomial, $\lambda_1,\dots,\lambda_l$ are
algebraic integers. It follows that the coefficients of $\mu_M$
are algebraic integers as well. Moreover, they are rational
numbers, because $a_M$ is defined rationally. Using that a
rational number is an algebraic integer iff it is an integer
number, we obtain $\mu_M\in \mathbb Z[x].$
\end{proof}

\begin{Proposition}\label{Proposition_structure_of_tame_modules}
Let $M$ be a torsion free tame $\mathbb Z[C]$-module and
$\mu_M=f_1^{m_1} \dots f_l^{m_l}$ where $f_1,\dots,f_l$ are
distinct irreducible integral monic polynomials and $m_i\geq 1$
for all $i$. Consider the filtration
$$0=F_0M\subset F_1M \subset  \dots \subset F_lM=M$$
given by $F_iM=M^{f_1^{m_1}\dots f_i^{m_i}}.$ Then
$F_iM/F_{j-1}M$ is torsion free and
$\mu_{F_iM/F_{j-1}M}=f_j^{m_j}\dots f_i^{m_i}$ for any $i\geq j.$
Moreover, the corresponding filtration on $M\otimes \mathbb Q$
splits:
$$M\otimes \mathbb Q\cong \bigoplus_{i=1}^l\ (F_iM/F_{i-1}M)\otimes \mathbb Q.$$
\end{Proposition}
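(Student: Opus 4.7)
The plan is to reduce the statement to Corollary \ref{Corollary_structure_over_field} applied to $V := M\otimes \mathbb Q$, and then transfer the conclusions back to $M$ using torsion-freeness. Since each $f_i$ is a monic integral irreducible polynomial, Gauss's lemma makes it irreducible in $\mathbb Q[x]$, and by definition $\mu_V=\mu_M = f_1^{m_1}\cdots f_l^{m_l}$. Therefore Corollary \ref{Corollary_structure_over_field} applies to $V$ and yields the filtration $F_iV = V^{f_1^{m_1}\cdots f_i^{m_i}}$, the splitting $V=\bigoplus_i F_iV/F_{i-1}V$, and the minimal polynomial identities $\mu_{F_iV/F_{j-1}V}=f_j^{m_j}\cdots f_i^{m_i}$ for $i\geq j$.

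The first key step is to identify $F_iM\otimes \mathbb Q = F_iV$ inside $V$. The inclusion $F_iM\otimes \mathbb Q\subseteq F_iV$ is immediate. For the other direction, take $v\in F_iV$ and write $v=m/n$ with $m\in M$, $n\geq 1$. Then $m\cdot f_1^{m_1}\cdots f_i^{m_i}(t)\in M$ becomes $n\cdot 0 = 0$ in $V$, hence is a torsion element of $M$, hence $0$ because $M$ is torsion free. Thus $m\in F_iM$, which gives $v\in F_iM\otimes \mathbb Q$.

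Next, I would deduce torsion-freeness of each $F_iM/F_{j-1}M$ by the same device: if $m\in F_iM$ and $nm\in F_{j-1}M$ for some $n\geq 1$, then $n\bigl(m\cdot f_1^{m_1}\cdots f_{j-1}^{m_{j-1}}(t)\bigr)=0$ in the torsion-free group $M$, so $m\cdot f_1^{m_1}\cdots f_{j-1}^{m_{j-1}}(t)=0$, i.e. $m\in F_{j-1}M$. Because $\mathbb Q$ is flat, the short exact sequence $0\to F_{j-1}M\to F_iM\to F_iM/F_{j-1}M\to 0$ tensored with $\mathbb Q$ together with the identification of the first step gives
\[
(F_iM/F_{j-1}M)\otimes \mathbb Q \;\cong\; F_iV/F_{j-1}V.
\]
Combined with Corollary \ref{Corollary_structure_over_field} and the definition $\mu_{F_iM/F_{j-1}M}=\mu_{(F_iM/F_{j-1}M)\otimes \mathbb Q}$, this gives the minimal polynomial formula. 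Applying the isomorphism in the case $j=i$ and using the splitting $V=\bigoplus_i F_iV/F_{i-1}V$ supplied by the corollary yields the final direct sum decomposition $M\otimes\mathbb Q\cong\bigoplus_i (F_iM/F_{i-1}M)\otimes \mathbb Q$.

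There is no genuine obstacle here; the only thing one has to be careful with is that the two natural filtrations commute with $-\otimes\mathbb Q$, and that is exactly where torsion-freeness enters. All remaining statements are formal consequences of the corresponding assertions of Corollary \ref{Corollary_structure_over_field} for the finite-dimensional $\mathbb Q[C]$-module $V=M\otimes\mathbb Q$.
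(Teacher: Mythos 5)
Your proposal is correct and follows essentially the same route as the paper: prove torsion-freeness of $F_iM/F_{j-1}M$ directly from torsion-freeness of $M$, pass to $V=M\otimes\mathbb Q$, invoke Gauss's lemma and Corollary \ref{Corollary_structure_over_field}, and identify $(F_iM/F_{j-1}M)\otimes\mathbb Q$ with $F_iV/F_{j-1}V$. The only difference is that you spell out the identification $F_iM\otimes\mathbb Q=F_iV$ and the flatness argument, which the paper leaves implicit.
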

\begin{proof} Prove that $F_iM/F_{j-1}M$ is torsion free. Let $ v+F_{j-1}M\in F_iM/F_{j-1}M$ and $nv+F_{j-1}M=0.$ Hence $nv\cdot f_1^{m_1}(t) \dots f_j^{m_j}(t)=0$ in $M.$  Using that $M$ is torsion free we get  $v\cdot f_1^{m_1}(t) \dots f_j^{m_j}(t)=0$, and hence $v+F_{j-1}M=0.$ Thus $F_iM/F_{j-1}M$ is torsion free.
Set $V=M\otimes \mathbb Q,$ $K=\mathbb Q$, apply Corollary
\ref{Corollary_structure_over_field} and note that
$F_iV/F_{j-1}V=(F_iM/F_{j-1}M)\otimes \mathbb Q$. The assertion
follows. Here we use Gauss lemma about integral polynomials: an
irreducible polynomial in $\mathbb Z[x]$ is irreducible in
$\mathbb Q[x].$
\end{proof}

Recall that a module $N$ is said to be nilpotent if $NI^n=0$ for
some $n,$ where $I$ is the augmentation ideal. It is easy to see
that a $\mathbb Z[C]$-module $N$ is nilpotent if and only if
$N^{(x-1)^n}=N$ for some $n.$

\begin{Definition} A $\mathbb Z[C]$-module $M$ is said to be {\it invariant free} if $M^C=0.$
\end{Definition}

\begin{Lemma}\label{Lemma_invariant_free} Let $M$ be a torsion free tame $\mathbb Z[C]$-module. Then the following equivalent.
\begin{enumerate}
\item $M$ is invariant free; \item $M$ does not have non-trivial
nilpotent submodules; \item $M_C$ is finite; \item $a_M-1$ is an
automorphism; \item $\chi_M(1)\ne 0;$ \item $\mu_M(1)\ne 0.$
\end{enumerate}
\end{Lemma}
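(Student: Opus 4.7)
The plan is to prove the six conditions equivalent by establishing three small clusters of implications and linking them via (4). The block (4)$\Leftrightarrow$(5)$\Leftrightarrow$(6) is pure linear algebra on the finite-dimensional $\mathbb Q$-vector space $M\otimes\mathbb Q$: characteristic and minimal polynomials share their roots, so $\chi_M(1)=0\Leftrightarrow \mu_M(1)=0$, and up to sign $\chi_M(1)=\det(I-a_M)$, which is the standard criterion for $a_M-1$ to be invertible.

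For (1)$\Leftrightarrow$(4) the key input is torsion-freeness: the inclusion $M\hookrightarrow M\otimes\mathbb Q$ identifies $M^C$ with $M\cap\ker(a_M-1)$. If $a_M-1$ is an automorphism then $M^C=0$; conversely, if $M^C=0$ then $t-1$ is injective on $M$, hence on $M\otimes\mathbb Q$ by flatness, and an injective endomorphism of a finite-dimensional vector space is automatically bijective. The equivalence (1)$\Leftrightarrow$(2) is then immediate: $M^C$ is itself a nilpotent submodule (annihilated by $x-1$), so (2)$\Rightarrow$(1) is automatic; conversely, if $N\subseteq M$ is a nonzero nilpotent submodule and $k\geq 1$ is minimal with $N(x-1)^k=0$, then $N(x-1)^{k-1}$ is a nonzero submodule of $M^C$, forcing $M^C\neq 0$.

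Finally, for (3)$\Leftrightarrow$(4), I would use that $M_C=M/(t-1)M$ is a finitely generated abelian group, because tameness gives finite generation of $M$ over $\mathbb Z[C]$ and $\mathbb Z[C]/I=\mathbb Z$; then $M_C\otimes\mathbb Q=\mathrm{coker}(a_M-1)$, which vanishes iff $a_M-1$ is surjective, iff it is an automorphism. Since a finitely generated abelian group is finite iff its rationalization vanishes, this closes the loop. The only place where a hypothesis does real work is this last upgrade from ``rationalization vanishes'' to ``finite'', which is the single point at which tameness is used essentially; the rest of the argument is formal.
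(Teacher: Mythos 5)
Your proof is correct and follows essentially the same route as the paper: (1)$\Leftrightarrow$(2) and (4)$\Leftrightarrow$(5)$\Leftrightarrow$(6) by direct arguments, (1)$\Leftrightarrow$(4) via identifying $M^C$ inside $M\otimes\mathbb Q$ (the paper phrases this as ${\rm Ker}(a_M-1)=M^C\otimes\mathbb Q$), and (3)$\Leftrightarrow$(4) via ${\rm Coker}(a_M-1)=M_C\otimes\mathbb Q$ together with finite generation of $M_C$. The only nitpick is your closing remark: tameness (finite rank of $M$) is also used implicitly wherever you invoke finite-dimensionality of $M\otimes\mathbb Q$ (injective $\Rightarrow$ bijective, existence of $\chi_M$ and $\mu_M$), not only in the final upgrade from vanishing rationalization to finiteness.
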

\begin{proof}(1) $\Leftrightarrow$ (2) and (4) $\Leftrightarrow$ (5) $\Leftrightarrow$ (6) are obvious.
The equality ${\rm Ker}(a_M-1)=M^C \otimes \mathbb Q$ implies (1)
$\Leftrightarrow$ (4). Since $M$ is finitely generated $\mathbb
Z[C]$-module, $M_C$ is a finitely generated abelian group. Then
the equality ${\rm Coker}(a_M-1)=M_C\otimes \mathbb Q$ implies (3)
$\Leftrightarrow$ (4).
\end{proof}

\begin{Corollary}\label{Corollary_nilpotent_submodule} Let $M$ be a torsion free tame $\mathbb Z[C]$-module and $\mu_M=(x-1)^{m} f$, where $f(1)\ne 0$. Then there exists the largest nilpotent submodule  $N\leq M$. Moreover, $\mu_N=(x-1)^m,$ $\mu_{M/N}=f,$ $M/N$ is torsion free and invariant free, and the short exact sequence $N\otimes \mathbb Q \mono M\otimes \mathbb Q \epi (M/N)\otimes \mathbb Q$ splits over $\mathbb Q[C]$.
\end{Corollary}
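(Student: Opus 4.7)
The plan is to apply Proposition \ref{Proposition_structure_of_tame_modules} with the factorization $\mu_M = (x-1)^m f_2^{m_2}\cdots f_l^{m_l}$, taking $f_1 := x-1$ as the first irreducible factor (if $m=0$ the statement is trivial with $N = 0$). This immediately produces a canonical filtration $0 = F_0 M \subset F_1 M \subset \cdots \subset F_l M = M$ with $F_1 M = M^{(x-1)^m}$. I would define $N := F_1 M$ and read off most of the claim from the Proposition: $\mu_N = (x-1)^m$ (case $i = j = 1$), $\mu_{M/N} = f_2^{m_2}\cdots f_l^{m_l} = f$ (case $j = 2$, $i = l$), and $M/N = F_l M / F_1 M$ is torsion free. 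The splitting of $N\otimes\mathbb Q \mono M\otimes\mathbb Q \epi (M/N)\otimes\mathbb Q$ over $\mathbb Q[C]$ is obtained by grouping the summands in the last display of Proposition \ref{Proposition_structure_of_tame_modules}: $(M/N)\otimes\mathbb Q \cong \bigoplus_{i\geq 2}(F_iM/F_{i-1}M)\otimes \mathbb Q$.

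The fact that $M/N$ is invariant free is a one-line application of Lemma \ref{Lemma_invariant_free}: since $\mu_{M/N}(1) = f(1) \ne 0$ and $M/N$ is torsion free and tame (it is a quotient of a tame module by a submodule, and one may check tameness is preserved here via the characteristic polynomial criterion of Proposition \ref{Proposition_main_tame}), condition (6) of the lemma applies.

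The only step requiring a short argument is the maximality of $N$ among nilpotent submodules. The plan is to show first that $(M/N)^{(x-1)^k} = 0$ for every $k \geq 1$, by induction on $k$. The base case $k = 1$ is exactly the invariant-freeness of $M/N$ just established. For the inductive step, if $\bar v \in (M/N)^{(x-1)^k}$, then $\bar v\cdot (x-1)^{k-1}$ lies in $(M/N)^C = 0$, so $\bar v \in (M/N)^{(x-1)^{k-1}} = 0$ by the hypothesis. Now given any nilpotent submodule $N' \leq M$, there is some $k$ with $N'\cdot (x-1)^k = 0$, so the image of $N'$ in $M/N$ lies in $(M/N)^{(x-1)^k} = 0$, i.e.\ $N' \subseteq N$. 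Hence $N$ is indeed the largest nilpotent submodule.

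I expect no real obstacle: the argument is essentially bookkeeping on top of Proposition \ref{Proposition_structure_of_tame_modules} and Lemma \ref{Lemma_invariant_free}. The most delicate point is the transitional claim that $M/N$ is still tame so that Lemma \ref{Lemma_invariant_free} applies; if one prefers to avoid invoking tameness, one can argue directly that $M/N$ is finitely generated and torsion free with $(M/N)\otimes \mathbb Q$ having an integral minimal polynomial $f$ satisfying $f(1)\ne 0$, which is already enough to run the equivalences in the lemma.
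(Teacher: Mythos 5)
Your proposal is correct and follows essentially the same route as the paper: the paper also handles the case $\mu_M(1)\ne 0$ trivially, writes $\mu_M=(x-1)^m f_2^{m_2}\cdots f_l^{m_l}$, invokes the filtration of Proposition \ref{Proposition_structure_of_tame_modules}, and sets $N=F_1M$. Your additional verifications (invariant-freeness via Lemma \ref{Lemma_invariant_free} and the induction giving maximality of $N$) are exactly the details the paper leaves implicit, and they are sound.
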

\begin{proof}
If $\mu_M(1)\ne 0,$ then $M$ is already invariant free, $N=0$ and
there is nothing to prove. If $\mu_M(1)=0,$ then we can decompose
$\mu_M=(x-1)^{m_1} f_2^{m_2}\dots f_l^{m_l}$ into a product of
irreducible polynomials such that $f_i(1)\ne 0$ for $i\geq 2.$
Consider the filtration from Proposition
\ref{Proposition_structure_of_tame_modules}. Then $N=F_1M.$
\end{proof}

\begin{Corollary} Let $M$ be a  tame $\mathbb Z[C]$-module. Then there exists the largest nilpotent submodule  $N\leq M$. Moreover, $M/N$ is invariant free and $(M/N)_C$ is finite.
\end{Corollary}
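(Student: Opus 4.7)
The plan is to bypass the torsion subgroup obstruction by defining $N$ intrinsically as the set of elements annihilated by some power of $(x-1)$, and then to reduce the finiteness of $(M/N)_C$ to a linear algebra statement over $\mathbb Q$.

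First I would set
$$N:=\bigcup_{n\geq 1} M^{(x-1)^n},$$
which is manifestly a submodule containing every nilpotent submodule. Since $\mathbb Z[C]=\mathbb Z[t,t^{-1}]$ is Noetherian and $M$ is finitely generated, the ascending chain $M^{(x-1)}\subseteq M^{(x-1)^2}\subseteq\dots$ stabilizes at some $k$, so $N=M^{(x-1)^k}$ is itself nilpotent and is the largest nilpotent submodule. The invariant-freeness of $M/N$ is then immediate: if $m(x-1)\in N$ then $m(x-1)^{k+1}=0$, so $m\in N$.

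The main task is to prove $(M/N)_C$ is finite, which I would handle by tensoring with $\mathbb Q$. By tameness (Proposition \ref{Proposition_main_tame}), $V:=M\otimes\mathbb Q$ is a finite-dimensional $\mathbb Q[C]$-module. The key compatibility to verify is
$$N\otimes\mathbb Q = V^{(x-1)^\infty};$$
one inclusion is clear, and the other follows by clearing denominators: if $v=m\otimes 1/n$ satisfies $v(x-1)^j=0$, then $m(x-1)^j$ is torsion in $M$, so some integer multiple of $m$ lies in $M^{(x-1)^j}\subseteq N$, whence $v\in N\otimes\mathbb Q$. Consequently $(M/N)\otimes\mathbb Q\cong V/V^{(x-1)^\infty}$ has trivial kernel under $(x-1)$, and since it is finite-dimensional over $\mathbb Q$ the map $(x-1)$ acts on it as an isomorphism. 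It follows that $(M/N)_C\otimes\mathbb Q=0$, so $(M/N)_C$ is a finitely generated torsion abelian group, hence finite.

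The main obstacle, compared with the torsion-free Corollary \ref{Corollary_nilpotent_submodule}, is that ${\sf tor}(M)$ need not itself be nilpotent: the tame module $\mathbb Z/3$ on which $C$ acts by negation has no nonzero nilpotent submodule, since $(x-1)$ acts on it as multiplication by $-2$, which is invertible modulo $3$. One therefore cannot simply lift the largest nilpotent submodule of $M/{\sf tor}(M)$ back to $M$; the intrinsic definition of $N$ via $(x-1)$-primary vectors, combined with Noetherian stabilization, sidesteps this complication cleanly.
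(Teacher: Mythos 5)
Your proof is correct, and it is essentially self-contained, whereas the paper states this corollary without proof, leaving the reader to assemble it from the torsion-free case (Corollary \ref{Corollary_nilpotent_submodule}, where $N=M^{(x-1)^m}$ comes from the factorization $\mu_M=(x-1)^mf$ with $f(1)\neq 0$) together with Lemma \ref{Lemma_invariant_free} and the finiteness of ${\sf tor}(M)$ from Proposition \ref{Proposition_main_tame}. Your route is genuinely different in flavor: you define $N=\bigcup_n M^{(x-1)^n}$ intrinsically, get its existence and maximality from Noetherianity of $\mathbb Z[t,t^{-1}]$ rather than from the minimal-polynomial filtration, and you prove finiteness of $(M/N)_C$ by the generalized-eigenspace argument over $\mathbb Q$ (injectivity, hence bijectivity, of $x-1$ on the finite-dimensional space $(M/N)\otimes\mathbb Q$, plus finite generation of $(M/N)_C$ over $\mathbb Z[C]/I=\mathbb Z$). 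This last step is not redundant bookkeeping: $M/N$ may have torsion (e.g.\ $M=\mathbb Z/3$ with $t$ acting by $-1$, where $N=0$), so Lemma \ref{Lemma_invariant_free}, which is stated only for torsion-free tame modules, cannot be quoted directly, and your argument in effect extends the relevant implication of that lemma to the general case. Your closing observation that ${\sf tor}(M)$ need not be nilpotent, so that one cannot simply pull back the largest nilpotent submodule of $M/{\sf tor}(M)$, correctly identifies why the paper's surrounding results speak of prenilpotent rather than nilpotent submodules in the presence of torsion (Corollary \ref{Corollary_prenilpotent_submodule}); your intrinsic definition of $N$ sidesteps this cleanly. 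What the paper's implicit route buys is the extra quantitative information ($\mu_N=(x-1)^m$, $\mu_{M/N}=f$, the rational splitting) used later; what your argument buys is brevity and independence from the structure theory.
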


Recall that a module $N$ is said to be prenilpotent if
$NI^n=NI^{n+1}$ for $n>\!>1.$

\begin{Corollary}\label{Corollary_prenilpotent_submodule} Let $M$ be a tame $\mathbb Z[C]$-module and $\mu_M=(x-1)^{m} f$, where $f(1)\ne 0$. Then there exists a prenilpotent submodule $N\leq M$ such that $M/N$ is torsion free and invariant free. Moreover, ${\sf tor}(N)={\sf tor}(M),$ $\mu_N=(x-1)^m,$  $\mu_{M/N}=f$ and the sequence $N\otimes \mathbb Q \mono M\otimes \mathbb Q \epi (M/N)\otimes \mathbb Q$ splits over $\mathbb Q[C].$
\end{Corollary}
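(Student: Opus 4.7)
The plan is to reduce to the torsion-free case already treated in Corollary \ref{Corollary_nilpotent_submodule} by killing the torsion. Since $M$ is tame, ${\sf tor}(M)$ is finite by Proposition \ref{Proposition_main_tame}, so the quotient $\bar M := M/{\sf tor}(M)$ is a finitely generated $\mathbb Z[C]$-module that is torsion free, of the same rank as $M$, with $\bar M\otimes \mathbb Q \cong M\otimes \mathbb Q$. In particular $\bar M$ is again tame and $\mu_{\bar M}=\mu_M=(x-1)^m f$. I would then apply Corollary \ref{Corollary_nilpotent_submodule} to $\bar M$ to obtain its largest nilpotent submodule $\bar N\leq \bar M$, with $\bar M/\bar N$ torsion free and invariant free, $\mu_{\bar N}=(x-1)^m$, $\mu_{\bar M/\bar N}=f$, and $\bar N\otimes \mathbb Q \mono \bar M\otimes \mathbb Q \epi (\bar M/\bar N)\otimes \mathbb Q$ split over $\mathbb Q[C]$.

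Now define $N\leq M$ as the preimage of $\bar N$ under the projection $M\epi \bar M$; equivalently there is a short exact sequence $0\to {\sf tor}(M) \to N \to \bar N \to 0$ and $M/N \cong \bar M/\bar N$. The properties ${\sf tor}(N)={\sf tor}(M)$ (since $\bar N$ is torsion free), $\mu_{M/N}=f$, and the torsion-freeness and invariant-freeness of $M/N$ are then immediate. For $\mu_N$, tensor the extension with $\mathbb Q$: since ${\sf tor}(M)\otimes \mathbb Q=0$, we have $N\otimes\mathbb Q\cong \bar N\otimes\mathbb Q$, so $\mu_N=\mu_{\bar N}=(x-1)^m$. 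The splitting of $N\otimes\mathbb Q \mono M\otimes\mathbb Q \epi (M/N)\otimes\mathbb Q$ over $\mathbb Q[C]$ is exactly the splitting of Corollary \ref{Corollary_nilpotent_submodule} applied to $\bar M$, under the canonical identifications above.

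The main point that remains is to verify that $N$ is prenilpotent. This is where finiteness of ${\sf tor}(M)$ enters crucially: from $\bar N \cdot I^{n_0} = 0$ for some $n_0$ (nilpotency of $\bar N$) we get $N\cdot I^n \subseteq {\sf tor}(M)$ for all $n\geq n_0$. Thus the descending chain
\[
N\cdot I^{n_0} \supseteq N\cdot I^{n_0+1} \supseteq N\cdot I^{n_0+2} \supseteq \cdots
\]
sits inside the \emph{finite} abelian group ${\sf tor}(M)$, so it must stabilise; that is, $N\cdot I^n = N\cdot I^{n+1}$ for $n\gg 1$, which is the definition of prenilpotence. I expect this to be the only non-formal step; the remaining conclusions are bookkeeping with the snake lemma and the behaviour of $\mu$ under rationalisation.
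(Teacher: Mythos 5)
Your argument is correct and is exactly the reduction the paper intends (it states this corollary without proof right after Corollary \ref{Corollary_nilpotent_submodule}): pass to the torsion-free quotient $M/{\sf tor}(M)$, apply Corollary \ref{Corollary_nilpotent_submodule} there, pull the nilpotent submodule back to $N\leq M$, and use finiteness of ${\sf tor}(M)$ (Proposition \ref{Proposition_main_tame}) to see that the chain $NI^{n}$ stabilizes, so $N$ is prenilpotent. The bookkeeping for ${\sf tor}(N)$, $\mu_N$, $\mu_{M/N}$ and the $\mathbb Q[C]$-splitting via $N\otimes \mathbb Q\cong \bar N\otimes \mathbb Q$ and $M\otimes\mathbb Q\cong \bar M\otimes\mathbb Q$ is all as it should be.
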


\begin{Lemma}\label{Lemma_polycyclic_modules} Let $M$ be a tame torsion free $\mathbb Z[C]$-module. If $\mu_M(0)\in \{-1,1\},$ then $M$ is finitely generated as an abelian group.
\end{Lemma}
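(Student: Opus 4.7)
The plan is to show that the action of $\mathbb Z[C]=\mathbb Z[t,t^{-1}]$ on $M$ factors through the quotient $\mathbb Z[t]/(\mu_M(t))$, which is finitely generated as an abelian group since $\mu_M$ is monic; combined with the tameness of $M$, this forces $M$ to be finitely generated as an abelian group.

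First, by Lemma \ref{Lemma_minimal_polynomial}, $\mu_M$ is an integral monic polynomial, so write $\mu_M(x)=x^n+c_{n-1}x^{n-1}+\cdots+c_1x+c_0$ with $c_i\in\mathbb Z$ and by hypothesis $c_0\in\{-1,1\}$. Since $M$ is torsion free, the canonical map $M\hookrightarrow M\otimes\mathbb Q$ is injective, and because $\mu_M(a_M)=0$ on $M\otimes\mathbb Q$, the identity $m\cdot\mu_M(t)=0$ holds in $M$ itself for every $m\in M$ (both sides lie in $M$, and they vanish after tensoring with $\mathbb Q$).

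Next, I would rearrange $m\cdot\mu_M(t)=0$ to obtain $c_0 m = -m\cdot t(t^{n-1}+c_{n-1}t^{n-2}+\cdots+c_1)$, and then apply $t^{-1}$ to conclude
$$m\cdot t^{-1} \;=\; \mp\, m\cdot\bigl(t^{n-1}+c_{n-1}t^{n-2}+\cdots+c_1\bigr).$$
All terms here lie in $M$, so this identity is valid in $M$ (not merely in $M\otimes\mathbb Q$). Consequently the action of $t^{-1}$ on $M$ is expressible as a $\mathbb Z[t]$-polynomial in $t$. Since $M$ is tame, it is finitely generated as a $\mathbb Z[C]$-module by some $m_1,\dots,m_k$, and the previous identity shows that the same set generates $M$ as a $\mathbb Z[t]$-module.

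Finally, the equation $m\cdot\mu_M(t)=0$ means the $\mathbb Z[t]$-action on $M$ factors through $\mathbb Z[t]/(\mu_M(t))$, which, as a quotient of $\mathbb Z[t]$ by a monic polynomial of degree $n$, is free of rank $n$ as an abelian group. A module finitely generated over a ring that is itself finitely generated as an abelian group is finitely generated as an abelian group, so $M$ is generated by at most $kn$ elements over $\mathbb Z$. I do not expect any serious obstacle: the key observation is simply that $\mu_M(0)=\pm 1$ lets one invert $t$ inside $\mathbb Z[t]$ on $M$, and torsion-freeness lets us lift this identity from $M\otimes\mathbb Q$ back to $M$.
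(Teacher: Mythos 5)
Your proposal is correct and takes essentially the same route as the paper: the paper's one-line proof rests on the fact that $\mathbb Z[t,t^{-1}]/(\mu_M(t))$ is a finitely generated abelian group, which is exactly your observation that $\mu_M$ is monic and $\mu_M(0)=\pm 1$ makes $t^{-1}$ act as a $\mathbb Z[t]$-polynomial in $t$, so the $\mathbb Z[C]$-action factors through a ring that is finitely generated over $\mathbb Z$. You merely make explicit the two steps the paper leaves implicit, namely that torsion-freeness lifts $\mu_M(a_M)=0$ from $M\otimes\mathbb Q$ to $m\cdot\mu_M(t)=0$ in $M$, and that a finitely generated module over such a ring is finitely generated as an abelian group.
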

\begin{proof}
It follows from the fact that  $\mathbb Z[t,t^{-1}]/(\mu_M(t))$ is
a finitely generated abelian group.
\end{proof}

\subsection{Completion of tame $\mathbb Z[C]$-modules}

If $M$ is a finitely generated $R[C]$-module, we set $\hat
M=\varprojlim M/MI^i$ and we denote by
$$\varphi=\varphi_M:M \longrightarrow \hat M$$
the natural map to the completion. Note that the functor $M\mapsto
\hat M$ is exact \cite[VIII]{Zariski-Samuel_II} and $\hat M /\hat
MI^i=M/MI^i.$

We set $$\mathbb Z_n=\varprojlim\: \mathbb Z/n^i$$ for any $n\in
\mathbb Z.$ In particular, $\mathbb Z_n=\mathbb Z_{-n},$ $\mathbb
Z_0=\mathbb Z,$ $\mathbb Z_1=0$ and, if $n\geq 2,$ then $\mathbb
Z_n=\bigoplus \mathbb Z_p,$ where $p$ runs over all prime divisors
of $n.$

\begin{Lemma}\label{Lemma_completion} Let $M$ be a tame torsion free invariant free $\mathbb Z[C]$-module and $n=\chi_M(1)$. Then $n^i\cdot M\subseteq MI^{i} $ for any $i\geq 1$ and
 there exists a unique epimorphism of $\mathbb Z[C]$-modules
$\hat  \varphi:M\otimes \mathbb Z_n \epi \hat M$ such that the
diagrams
$$
\xyma{
M \otimes \mathbb Z_n\ar@{->>}[r]^{\hat \varphi} \ar@{->>}[d] & \hat M \ar@{->>}[d] \\
M \otimes \mathbb Z/n^i \ar@{->>}[r] & M/MI^i }
$$
are commutative.
\end{Lemma}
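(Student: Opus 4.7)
The plan is as follows. First, I would establish $n^i M \subseteq MI^i$ via a Cayley--Hamilton argument: since $\chi_M(a_M) = 0$ on $M \otimes \mathbb{Q}$ and $M$ is torsion-free, one has $\chi_M(t) m = 0$ for every $m \in M$. Writing $\chi_M(x) = p(x)(x-1) + \chi_M(1) = p(x)(x-1) + n$ with $p \in \mathbb{Z}[x]$, this gives $nm = -m \cdot p(t)(t-1) \in MI$, and iterating yields $n^i m = (-1)^i m \cdot p(t)^i (t-1)^i \in MI^i$.

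To construct $\hat\varphi$, the inclusion just proved makes each projection $M \to M/MI^i$ factor through $M/n^i M = M \otimes \mathbb{Z}/n^i$; composing with the canonical $M \otimes \mathbb{Z}_n \twoheadrightarrow M \otimes \mathbb{Z}/n^i$ and assembling via the universal property of $\hat M = \varprojlim M/MI^i$ produces the desired $\mathbb{Z}[C]$-linear map $\hat\varphi$, with the commuting diagrams built into the construction. Uniqueness is automatic, since any map into an inverse limit is determined by its composites with the structure projections.

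The main work is surjectivity. I would first observe that each $M/MI^i$ is finite: $M_C = M/MI$ is finite by Lemma \ref{Lemma_invariant_free}, and each $MI^j/MI^{j+1}$ is a quotient of $M_C$ via multiplication by $(t-1)^j$. Hence $\hat M$ is profinite and, as an inverse limit of $\mathbb{Z}/n^i$-modules, naturally a $\mathbb{Z}_n$-module. Setting $y = t-1$ and writing $\chi_M(t) = n + q(y)y$ with $q \in \mathbb{Z}[y]$, the action of $\mathbb{Z}[[y]]$ on $\hat M$ extends continuously to $\mathbb{Z}_n[[y]]$, and $\chi_M(t)$ annihilates $\hat M$, so $\hat M$ is a module over $\mathbb{Z}_n[[y]]/(n + q(y)y)$. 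Applying Weierstrass preparation to $n + q(y)y$ over $\mathbb{Z}_p$ for each prime $p \mid n$ gives $\mathbb{Z}_p[[y]]/(n + q(y)y) \cong \mathbb{Z}_p[y]/P_p(y)$ for a distinguished polynomial $P_p$, a free $\mathbb{Z}_p$-module of finite rank; so the quotient ring is a finite $\mathbb{Z}_n$-algebra.

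Finally, a topological Nakayama argument on the $y$-adically complete module $\hat M$, exploiting the identification $\hat M / y\hat M \cong M_C$ (which in turn follows from injectivity of $t-1$ on the invariant-free module $M$), shows that $\hat M$ is generated as a $\mathbb{Z}_n[[y]]/(n + q(y)y)$-module by $\varphi(m_1), \ldots, \varphi(m_k)$, where $m_1, \ldots, m_k$ are $\mathbb{Z}[C]$-generators of $M$. Combined with the previous step, $\hat M$ is generated as a $\mathbb{Z}_n$-module by the finitely many elements $y^s \varphi(m_j) = \varphi(m_j (t-1)^s) \in \varphi(M)$, whence $\hat M = \mathbb{Z}_n \cdot \varphi(M) = \hat\varphi(M \otimes \mathbb{Z}_n)$. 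The main obstacle I expect is the Weierstrass step together with the identification $\hat M / y\hat M \cong M_C$; once these are verified, the remainder of the argument is formal.
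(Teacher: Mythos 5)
Your argument is correct, but for the main point (surjectivity of $\hat\varphi$) it takes a genuinely different route from the paper. The opening steps agree in substance: the paper obtains $n^iM\subseteq MI^i$ from the constant term of the characteristic polynomial of $b=a_M-1$, which is the same computation as your Cayley--Hamilton division $\chi_M(x)=p(x)(x-1)+n$, and the construction and uniqueness of $\hat\varphi$ via the universal property of $\varprojlim$ are identical. For surjectivity the paper factors $\hat\varphi$ as $M\otimes\mathbb Z_n\to\varprojlim(M\otimes\mathbb Z/n^i)\to\hat M$, gets surjectivity of the second map from vanishing of $\varprojlim^1$ for the inverse system of finite kernels of $M\otimes\mathbb Z/n^i\epi M/MI^i$, and surjectivity of the first map by a $p$-basic subgroup argument (needed because $M$, e.g. $\mathbb Z[1/3]$, need not be finitely generated as an abelian group). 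You instead use the $\mathbb Z_n[[y]]$-module structure on the profinite group $\hat M$ (your finiteness argument for $M/MI^i$ via $M_C$ is fine), note that $\chi_M(1+y)$ kills $\hat M$, apply Weierstrass preparation at each $p\mid n$ -- legitimate, since $\chi_M(1+y)$ is monic in $y$ and hence has no $p$-power content, giving $\mathbb Z_p[[y]]/(\chi_M(1+y))\cong\mathbb Z_p[y]/(P_p)$, module-finite over $\mathbb Z_p$ -- and then run topological Nakayama over the local rings $\mathbb Z_p[[y]]$ using $\hat M/y\hat M\cong M_C$, which is indeed available (either from the identity $\hat M/\hat MI^i=M/MI^i$ quoted just before the lemma, or from exactness of completion applied to $0\to M\xrightarrow{t-1}M\to M_C\to 0$ as you suggest); since $\hat\varphi(m\otimes\lambda)=\lambda\varphi(m)$, generation of $\hat M$ over $\mathbb Z_n$ by the elements $\varphi(m_j(t-1)^s)$ is exactly surjectivity. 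Your route invokes heavier (though standard) machinery and in exchange yields a bit more -- it exhibits $\hat M$ as a finitely generated $\mathbb Z_n$-module over an explicit finite $\mathbb Z_n$-algebra -- whereas the paper's argument is more elementary and stays at the level of abelian-group facts (finiteness of $M\otimes\mathbb Z/n^i$, $\varprojlim^1$-vanishing, basic subgroups), which matches how the lemma is then fed into the tensor-product statements of Propositions \ref{Proposition_omega} and \ref{onekey}.
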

\begin{proof} We identify $M$ with the subgroup of $M\otimes \mathbb Q.$ Corollary \ref{Lemma_invariant_free} implies that $n\ne 0.$ Set $b=a_M-1.$ Then the characteristic polynomial of $b$ is equal to $\chi_b(x)=\chi_M(x+1)$ and if $\chi_b=\sum_{i=0}^d \beta_ix^i,$ then $ \beta_0=n.$ Thus $nx=b(\sum_{i=1}^d \beta_i b^{i-1}(x))$ for any $x\in M.$ It follows that $nM\subseteq b(M).$ Hence $n^iM\subseteq b^i(M)=MI^i$ for any $i\geq 1$ and we obtain homomorphisms $M\otimes  \mathbb Z/n^i \to M/MI^i.$  We define $\hat \varphi$ as the composition  $M\otimes \mathbb Z_n \to \varprojlim (M\otimes \mathbb Z/n^i) \to \hat M.$
Since the rank $M$ is finite, the abelian groups $M\otimes \mathbb
Z/n^i$ are finite.
 Thus we get that the homomorphism $\varprojlim (M\otimes \mathbb Z/n^i) \to \hat M$ is an epimorphism because $\varprojlim^1$ of an inverse sequence of finite groups is trivial.  Then it is sufficient to prove that the homomorphism $M\otimes \mathbb Z_n \to \varprojlim (M\otimes\: \mathbb Z/n^i)$ is an epimorphism. For this it is enough to prove that $M\otimes \mathbb Z_p \to \varprojlim (M\otimes\: \mathbb Z/p^i)$ is an epimorphism for any prime $p.$ Consider a $p$-basic subgroup $B$ of $M$ (see \cite[VI]{Fuchs}). Since $B\cong \mathbb Z^l,$ we get $B\otimes \mathbb Z_p = \varprojlim (B\otimes\: \mathbb Z/p^i).$ Using that $B\otimes \mathbb Z/p^i \epi M\otimes \mathbb Z/p^i$ are epimorphisms of finite groups, we obtain that $\varprojlim (B\otimes \mathbb Z/p^i) \to \varprojlim (M\otimes \mathbb Z/p^i)$ is an epimorphism. Then analysing the diagram
$$
\xyma{
B\otimes \mathbb Z_p\ar@{>->}[r] \ar@{->}[d]^\cong & M \otimes \mathbb Z_p \ar@{->}[d] \\
\varprojlim (B\otimes \mathbb Z/p^i)\ar@{->>}[r] & \varprojlim
(M\otimes \mathbb Z/p^i) }
$$
we obtain that the right vertical arrow is an epimorphism.
\end{proof}

A $\mathbb Z[C]$-module is said to be {\it perfect} if $MI=M.$

\begin{Corollary}\label{Corollary_perfect} Let $M$ be a torsion free tame $\mathbb Z[C]$-module. If $\mu_M(1)\in \{-1,1\},$ then $M$ is perfect.
\end{Corollary}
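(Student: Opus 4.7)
The plan is to reduce the assertion $MI = M$ to the surjectivity of the single operator $b := a_M - 1$ on $M$, and then apply a Cayley-Hamilton–style trick using $\mu_M(1) = \pm 1$.

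First I would identify $MI$ with $b(M)$. The augmentation ideal $I = (t-1)\mathbb{Z}[C]$ is generated by $t-1$, and the subset $b(M) = \{m(t-1) : m \in M\}$ is already a $\mathbb{Z}[C]$-submodule of $M$: one checks $m(t-1)t = (mt)(t-1)$ and $m(t-1)t^{-1} = -(mt^{-1})(t-1)$, so stability under the $C$-action is automatic. Hence $MI = b(M)$, and it suffices to show that $b:M\to M$ is surjective.

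Next I would pass to the shifted polynomial $\mu_b(x) := \mu_M(x+1)$. By Lemma \ref{Lemma_minimal_polynomial} $\mu_M$ is integral and monic, so $\mu_b$ is as well, and its constant term is $\mu_b(0) = \mu_M(1) = \pm 1$. By definition of $\mu_M$, the operator $\mu_b(b) = \mu_M(a_M)$ vanishes on $M\otimes\mathbb{Q}$. Since $M$ is torsion-free, the inclusion $M \hookrightarrow M \otimes \mathbb{Q}$ is a $\mathbb{Z}[C]$-embedding, so $\mu_b(b)$ vanishes on $M$ itself. Writing $\mu_b(x) = x^d + c_{d-1} x^{d-1} + \cdots + c_1 x \pm 1$ with $c_i \in \mathbb{Z}$, this identity rearranges, for every $m \in M$, to
$$\pm m \;=\; -\,b\bigl(b^{d-1} + c_{d-1} b^{d-2} + \cdots + c_1\cdot\mathrm{id}\bigr)(m) \;\in\; b(M),$$
whence $M \subseteq b(M) = MI$, as desired.

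There is no real obstacle here; the argument is a Cayley-Hamilton–style trick applied to the minimal polynomial. The only point requiring care—trivial in this setting—is the descent of the identity $\mu_b(b) = 0$ from the rational vector space $M\otimes\mathbb{Q}$ to the integral module $M$, which is immediate from torsion-freeness. The integrality of $\mu_b$ is what makes the rearrangement above legitimate inside $M$ rather than merely inside $M\otimes\mathbb{Q}$, and this is exactly the content of Lemma \ref{Lemma_minimal_polynomial}.
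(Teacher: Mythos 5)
Your proof is correct, but it takes a more direct route than the paper. The paper deduces the corollary from two earlier results: Lemma \ref{Lemma_invariant_free} gives that $M$ is invariant free, and Lemma \ref{Lemma_completion} (applied with $n=\chi_M(1)$) gives $\hat M=0$, hence $MI=M$; note that this route implicitly needs the small extra observation that $\mu_M(1)\in\{-1,1\}$ forces $\chi_M(1)\in\{-1,1\}$ (each irreducible factor $f_i$ of $\mu_M$ has $f_i(1)=\pm1$), which the paper does not spell out. You instead run the same Cayley--Hamilton-style trick that is buried inside the proof of Lemma \ref{Lemma_completion}, but with the minimal polynomial $\mu_M$ in place of the characteristic polynomial $\chi_M$: since $\mu_b(x)=\mu_M(x+1)$ is monic integral with constant term $\mu_M(1)=\pm1$ and kills $M$ (torsion-freeness lets you descend $\mu_b(b)=0$ from $M\otimes\mathbb Q$ to $M$), you get $\pm m\in M(t-1)=MI$ for all $m$. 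This buys you two things: the hypothesis $\mu_M(1)=\pm1$ is used exactly as stated, with no detour through $\chi_M$, and the completion $\hat M$ plays no role at all, so the argument is self-contained modulo Lemma \ref{Lemma_minimal_polynomial}. One cosmetic slip: in your verification that $M(t-1)$ is a submodule, $m(t-1)t^{-1}=(mt^{-1})(t-1)$ without the minus sign; this is immaterial since the subgroup is closed under negation in any case.
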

\begin{proof}
 By Lemma \ref{Lemma_invariant_free}, $M$ is invariant free.  Then Lemma \ref{Lemma_completion} implies $\hat M=0.$ Hence $M$ is perfect.
\end{proof}

\begin{Corollary}\label{perfectm} Let $M$ be a tame $\mathbb Z[C]$-module. If $\mu_M=(x-1)^m f,$ where $f$ is an integral polynomial such that $f(1)\in \{-1,1\},$ then $M$ is prenilpotent.
\end{Corollary}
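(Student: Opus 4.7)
The plan is to reduce to a prenilpotent-by-perfect decomposition using Corollary \ref{Corollary_prenilpotent_submodule}, and then show that prenilpotence lifts through such an extension by a direct filtration computation. First I would apply Corollary \ref{Corollary_prenilpotent_submodule} to produce a prenilpotent submodule $N\leq M$ such that $M/N$ is torsion free and invariant free, with $\mu_N=(x-1)^m$ and $\mu_{M/N}=f$. Before invoking the perfectness criterion Corollary \ref{Corollary_perfect}, I need to check that $M/N$ is itself tame. This is a routine verification via Proposition \ref{Proposition_main_tame}: $\mathrm{tor}(M/N)=0$ since $M/N$ is torsion free, the rank is bounded by that of $M$, and $\mu_{M/N}=f\in\mathbb Z[x]$ is integral monic, so $\chi_{M/N}$—which has the same irreducible factors over $\mathbb Q$ as $\mu_{M/N}$—is also integral monic. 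With tameness established, the hypothesis $f(1)\in\{-1,1\}$ lets Corollary \ref{Corollary_perfect} conclude that $M/N$ is perfect, i.e.\ $(M/N)I=M/N$, which is the same as
$$M=MI+N.$$

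Right-multiplying this identity by $I^k$ for any $k\geq 0$ gives the crucial relation
$$MI^k=MI^{k+1}+NI^k.$$
Passing to the quotient by $MI^{k+1}$ and using that $NI^{k+1}\subseteq NI^k\cap MI^{k+1}$, the second isomorphism theorem yields a surjection
$$NI^k/NI^{k+1}\twoheadrightarrow NI^k/(NI^k\cap MI^{k+1})\cong MI^k/MI^{k+1}.$$
Since $N$ is prenilpotent by construction, $NI^k=NI^{k+1}$ for all sufficiently large $k$, so the left-hand side vanishes for large $k$ and hence $MI^k=MI^{k+1}$ for large $k$. This is exactly the definition of $M$ being prenilpotent.

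I do not expect a serious obstacle. The whole argument rests on the single equality $M=MI+N$, which in turn comes entirely from the already-proved Corollary \ref{Corollary_perfect}; the rest is a two-line manipulation of the $I$-adic filtration. The only mildly delicate point is verifying that $M/N$ satisfies the hypotheses of Corollary \ref{Corollary_perfect} (tameness), but this follows immediately from the structure of $\mu_{M/N}$ noted above.
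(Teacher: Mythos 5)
Your proof is correct and takes essentially the same route as the paper: split off the (pre)nilpotent submodule $N$ with $\mu_{M/N}=f$, use Corollary \ref{Corollary_perfect} to see that $M/N$ is perfect, and conclude prenilpotence of $M$ from that of $N$. The only differences are cosmetic — the paper first discards the (finite) torsion and invokes Corollary \ref{Corollary_nilpotent_submodule}, whereas you use Corollary \ref{Corollary_prenilpotent_submodule} directly and make explicit the tameness of $M/N$ and the filtration step $M=MI+N\Rightarrow MI^{k}=MI^{k+1}+NI^{k}$ that the paper leaves implicit.
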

\begin{proof}
A finite module is always prenilpotent, so we can assume that $M$
has no torsion. Further, by Lemma
\ref{Corollary_nilpotent_submodule}, we can consider the largest
nilpotent submodule $N\leq M$ such that $\mu_N=(x-1)^m$ and
$\mu_{M/N}=f.$ Corollary \ref{Corollary_perfect} implies that
$M/N$ is perfect. Then $N$ and $M/N$ are prenilpotent, and hence,
$M$ is prenilpotent.
\end{proof}

\begin{Proposition}\label{Proposition_omega}
Let $M$ and $M'$  be tame $\mathbb Z[C]$-modules with the same
integral generator $t\in C,$ $\lambda_1,\dots,\lambda_l\in \mathbb
C$ are eigenvalues of $a_M$ and $\lambda'_1,\dots,\lambda'_{l'}\in
\mathbb C$ are eigenvalues of $a_{M'}$.
 Assume that the equality $\lambda_i\lambda'_j=1$ holds only if $\lambda_i=\lambda_j'=1.$ Then the homomorphism
$$ (M\otimes M')_C\longrightarrow ( \hat M \otimes \hat M')_C$$
is an epimorphism.
\end{Proposition}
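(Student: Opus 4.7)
The plan is to reduce to the case where both factors are torsion-free and invariant-free by peeling off the prenilpotent parts, and then to invoke Lemma \ref{Lemma_completion} together with the eigenvalue hypothesis to force surjectivity onto a finite target. So I fix, via Corollary \ref{Corollary_prenilpotent_submodule}, prenilpotent submodules $N\le M$ and $N'\le M'$ with $M/N$ and $M'/N'$ torsion-free and invariant-free; over $\mathbb Q$ the eigenvalues of $t$ on $N$ and $N'$ are all equal to $1$, while those on $M/N$ and $M'/N'$ are exactly the $\lambda_i\ne 1$ and $\lambda'_j\ne 1$, respectively.

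Next, using exactness of completion on finitely generated $\mathbb Z[C]$-modules together with right-exactness of $-\otimes_{\mathbb Z}-$ and of $(-)_C$, I would run a diagram chase along the short exact sequences
\[
0\to N\to M\to M/N\to 0,\qquad 0\to N'\to M'\to M'/N'\to 0,
\]
applied one at a time, to reduce the claim to surjectivity on each of four pieces: (a) $(N\otimes N')_C\to(\hat N\otimes\hat{N'})_C$; (b) $(N\otimes (M'/N'))_C\to(\hat N\otimes\widehat{M'/N'})_C$; (c) the symmetric counterpart with $M/N$ and $N'$; and (d) $((M/N)\otimes(M'/N'))_C\to(\widehat{M/N}\otimes\widehat{M'/N'})_C$. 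Case (a) is immediate: prenilpotence gives $\hat N=N/\bigcap_i NI^i$, a quotient of $N$, so $\hat N\otimes\hat{N'}$ is already a quotient of $N\otimes N'$, and the map is surjective after passing to coinvariants.

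The decisive case is (d). Applying Lemma \ref{Lemma_completion} to $M/N$ and $M'/N'$, with $n=\chi_{M/N}(1)$ and $n'=\chi_{M'/N'}(1)$, presents $\widehat{M/N}\otimes\widehat{M'/N'}$ as a quotient of $(M/N)\otimes(M'/N')\otimes\mathbb Z_n\otimes\mathbb Z_{n'}$. Since $\mathbb Z_n\otimes\mathbb Z_{n'}$ carries the trivial $C$-action, coinvariants pull out those two tensor factors, so $(\widehat{M/N}\otimes\widehat{M'/N'})_C$ is a quotient of $((M/N)\otimes(M'/N'))_C\otimes\mathbb Z_n\otimes\mathbb Z_{n'}$. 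The eigenvalue hypothesis forces every $\lambda_i\lambda'_j$ appearing as an eigenvalue of $t$ on $(M/N)\otimes(M'/N')$ to differ from $1$; combined with Lemma \ref{Lemma_tensor_product} and Lemma \ref{Lemma_invariant_free}, this makes $F:=((M/N)\otimes(M'/N'))_C$ a \emph{finite} abelian group. For finite $F$, the natural map $F\to F\otimes\mathbb Z_n\otimes\mathbb Z_{n'}$ is the projection onto the sum of its $p$-primary components for primes $p$ dividing $\gcd(n,n')$, hence is surjective; composing with the quotient to $(\widehat{M/N}\otimes\widehat{M'/N'})_C$ settles case (d). Cases (b) and (c) then follow from the same argument applied to a single invariant-free factor, since $N\otimes(M'/N')$ has all $\mathbb Q$-eigenvalues $1\cdot\lambda'_j=\lambda'_j\ne 1$ and so its coinvariants are again finite.

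The main obstacle I foresee is the initial four-piece reduction: because $\otimes_{\mathbb Z}$ is only right exact and $N,N'$ may carry torsion, care is needed to verify that the comparison diagram between the natural filtration on $M\otimes M'$ and the one on $\hat M\otimes\hat{M'}$ truly admits a term-by-term surjectivity argument. The cleanest route appears to be a two-step application of the diagram chase, first splitting $M'$ and then splitting $M$, using exactness of completion to keep $\widehat{N'}\to\widehat{M'}\to\widehat{M'/N'}$ short exact at each stage and to transfer right-exact rows across the completion functor.
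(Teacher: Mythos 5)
Your proof is correct and follows essentially the same route as the paper: the same reduction via Corollary \ref{Corollary_prenilpotent_submodule} together with a diagram chase using exactness of completion and right-exactness of $\otimes$ and $(-)_C$, and the same core argument for the invariant-free part, namely Lemma \ref{Lemma_completion} plus finiteness of $((M/N)\otimes(M'/N'))_C$ so that tensoring with $\mathbb Z_n\otimes\mathbb Z_{n'}$ is surjective. The only minor divergence is that the paper handles every piece containing a prenilpotent factor at once via the identification $(\hat M\otimes \hat M')_C\cong (M/MI^i\otimes M'/M'I^i)_C$, while you treat the mixed pieces (b), (c) by the same finiteness-of-coinvariants argument (where finiteness follows from finite generation and the vanishing of the rational cokernel, since $N\otimes(M'/N')$ need not be torsion free); both versions work.
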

\begin{proof}
Note that if $M_1\mono M_2 \epi M_3$ is a short exact sequence of
tame modules and $(M_1 \otimes M')_C \to (\hat M_1\otimes \hat
M')_C,$ $(M_3 \otimes M')_C \to (\hat M_3\otimes \hat M')_C$ are
epimorphisms, then $(M_2 \otimes M')_C \to (\hat M_2\otimes \hat
M')_C$ is an epimorphism. Indeed, since the functor of completion
is exact, we have the commutative diagram with exact rows
$$
\xyma{
(M_1 \otimes M')_C\ar@{->}[r] \ar@{->>}[d] & (M_2 \otimes M')_C \ar@{->>}[r] \ar@{->}[d]  & (M_3 \otimes M')_C\ar@{->>}[d]\\
(\hat M_1 \otimes \hat M')_C \ar@{->}[r] & (\hat M_2 \otimes \hat
M')_C \ar@{->>}[r]  & (\hat M_3 \otimes \hat M')_C }
$$
that implies this. Then, using Corollary
\ref{Corollary_prenilpotent_submodule}, we obtain that we can
divide our prove into two parts: (1) prove the statement for the
case of torsion free invariant free modules $M,M'$; (2) prove the
statement for the case of a prenilpotent module $M$ and arbitrary
tame module $M'$. Throughout the proof we use that $(M\otimes
M')_C\cong M\otimes_{\mathbb Z[C]} M'_{\sigma},$ where $M'_\sigma$
is the module with the same underling abelian group $M'$ but with
the twisted action of $C:$ $m*t=mt^{-1}.$

(1) Assume that $M,M'$ are torsion free invariant free tame
$\mathbb Z[C]$-modules.  Lemma  \ref{Lemma_invariant_free} implies
that $\lambda_i\ne 1$ and $\lambda_j'\ne 1$  for all $i,j.$ Then
we have $\lambda_i\lambda_j'\ne 1$ for all $i,j.$ Note that the
eigenvalues of $a_M\otimes a_{M'}$ equal to the products
$\lambda_i\lambda_j,$ and hence $1$ is not an eigenvalue of
$a_M\otimes a_{M'}.$ It follows that ${\sf det}(a_M\otimes
a_{M'}-1)\ne 0.$ Consider the minimal polynomial $\mu$ of the
tensor square $a_M \otimes a_{M'}.$ Since the $a_M\otimes a_{M'}$
is defined over $\mathbb Q,$ the coefficients of $\mu$ are
rational (because they are invariant under the action of the
absolute Galois group). Moreover, $\mu=\prod
(x-\lambda_i\lambda_j')^{k_{i,j}}$ for some $k_{i,j},$ and hence,
its coefficients are algebraic integers. It follows that $\mu$ is
a monic polynomial with integral coefficients. The polynomial
$\mu(x+1)$ is the minimal polynomial for $a_M\otimes a_{M'}-1.$
Let $\mu(x+1)=\sum_{i=0}^k n_{i}x^i.$   Then $n_0={\sf
det}(a_M\otimes a_{M'}-1)\ne 0$ and $n_0 \cdot (M\otimes M')
\subseteq  (M\otimes M')(t-1).$ Since the rank of $M\otimes M'$ is
finite, $(M\otimes M')/n_0(M\otimes M')$ is finite, and hence,
$(M\otimes M')_C=(M\otimes M')/(M\otimes M')(t-1)$ is finite. By
Lemma \ref{Lemma_completion} we have epimorphisms $M\otimes
\mathbb Z_n \epi \hat M$ and $M'\otimes \mathbb Z_{n'} \epi \hat
M',$ where $n={\sf det}(a_M-1)$ and $n'={\sf det}(a_{M'}-1).$ It
is easy to see that $$((M\otimes \mathbb Z_n)\otimes (M'\otimes
\mathbb Z_{n'}))_C=(M \otimes M')_C\otimes (\mathbb Z_n \otimes
\mathbb Z_{n'}).$$ Since $(M\otimes M')_C$ is finite, $(M\otimes
M')_C \to  (M \otimes M')_C\otimes (\mathbb Z_n \otimes \mathbb
Z_{n'})$ is an epimorphism. Then $(M\otimes M')_C \to  (\hat M
\otimes \hat M')_C$ is an epimorphism.

(2) Assume that $M$ is a prenilpotent $\mathbb Z[C]$-module and
$M'$ is a tame $\mathbb Z[C]$-module. Then there exists $i$ such
that $\hat M=M/MI^i.$ Since $(\hat M\otimes \hat M')_C\cong \hat M
\otimes_{\mathbb Z[C]} \hat M'_\sigma,$ we get $$(\hat M\otimes
\hat M')_C\cong ( M/MI^i\otimes \hat M')_C\cong ( M/MI^i\otimes
\hat M'/\hat M'I^i)_C\cong ( M/MI^i\otimes  M'/ M'I^i)_C.$$ It
follows that $(M\otimes M')_C\to (\hat M\otimes \hat M')_C$ is an
epimorphism.
\end{proof}

\begin{Corollary}Let $M$  be a tame $\mathbb Z[C]$-module and $\mu_M=(x-1)^{m}f_1^{m_1}\dots f_l^{m_l} $ for some distinct monic irreducible polynomials $f_1,\dots,f_l\in \mathbb Z[x]$ such that $f_i(1)\ne 0$ and $f_i(0)\notin \{1,-1\}$ for all $1\leq i\leq l.$ Then the the homomorphism
$$ (M^{\otimes 2})_C\longrightarrow ( \hat M^{\otimes 2} )_C$$
is an epimorphism.
\end{Corollary}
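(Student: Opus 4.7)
The plan is to deduce this corollary directly from Proposition \ref{Proposition_omega} applied with $M' = M$. For that, I must check the hypothesis: whenever $\lambda_i, \lambda_j$ are eigenvalues of $a_M$ with $\lambda_i \lambda_j = 1$, both equal $1$. The eigenvalues of $a_M$ are $1$ (present when $m \geq 1$) together with the roots of the irreducible factors $f_1, \dots, f_l$. Note that $a_M$ is invertible, so every eigenvalue is nonzero; in particular $f_i(0) \neq 0$ for all $i$, which combined with the hypothesis gives $|f_i(0)| \geq 2$.

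I would then run a short case analysis. If $\lambda_i = 1$, then $\lambda_j = 1$ and we are done. Otherwise $\lambda_i$ is a root of some $f_a$, and the hypothesis $f_a(1) \neq 0$ forces $\lambda_j = 1/\lambda_i \neq 1$, so $\lambda_j$ is a root of some $f_b$. The reciprocal polynomial $\tilde f_a(x) := x^{\deg f_a} f_a(1/x)/f_a(0)$ is a monic polynomial over $\mathbb Q$ having $\lambda_j$ as a root, and it is irreducible because reciprocation is a Galois-equivariant bijection on $\overline{\mathbb Q}^{\,*}$ that sends the Galois orbit of $\lambda_i$ bijectively onto the Galois orbit of $\lambda_j$. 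Since $f_b$ is the monic minimal polynomial of $\lambda_j$, we must have $f_b = \tilde f_a$. Comparing constant terms gives $f_a(0) f_b(0) = 1$, contradicting $|f_a(0)|, |f_b(0)| \geq 2$.

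Having verified the hypothesis of Proposition \ref{Proposition_omega} with $M'=M$, we obtain the epimorphism $(M \otimes M)_C \twoheadrightarrow (\hat M \otimes \hat M)_C$. The statement for $M^{\otimes 2}$ follows at once.

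The only nontrivial step is the reciprocal-polynomial argument; the rest is bookkeeping. The key input one must remember is that reciprocation preserves irreducibility of an integer polynomial with nonzero constant term, which is what forces $f_a(0) f_b(0) = 1$ and makes the integrality of $f_a(0), f_b(0)$ bite against the assumption $f_i(0) \notin \{-1,1\}$.
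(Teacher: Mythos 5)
Your proof is correct and takes essentially the same route as the paper: both arguments apply Proposition \ref{Proposition_omega} with $M'=M$ and verify its eigenvalue hypothesis by noting that $\lambda_i\lambda_j=1$ with $\lambda_i\neq 1$ would force the constant terms of the corresponding irreducible integral factors to be units in $\mathbb Z$, contradicting $f_i(0)\notin\{-1,1\}$. The only difference is cosmetic: the paper cites the standard fact that an invertible algebraic integer has minimal polynomial with constant term $\pm 1$, whereas you re-derive it in line via the reciprocal polynomial $\tilde f_a$.
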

\begin{proof} Let $\lambda_1,\dots,\lambda_k$ be roots of $\mu_M$. Assume that $\lambda_i\lambda_j=1.$ Then $\lambda_i$ is an invertible algebraic integer, and hence, the absolute term of its minimal polynomial equals to $\pm 1.$ Thus $\lambda_i$ can not be a root of $f_m$ for $1\leq m\leq l.$ It follows that it is a root of $x-1.$ Then $\lambda_i=\lambda_j=1.$
\end{proof}

\begin{Proposition}\label{onekey}
Let $M,M'$  be tame $\mathbb Z[C]$-modules with the same integral
generator $t\in C,$ $\mu_M=(x-\lambda_1)^{m_1}\dots
(x-\lambda_l)^{m_l}$ for some distinct
$\lambda_1,\dots,\lambda_l\in \mathbb C$ and
$\mu_{M'}=(x-\lambda'_1)^{m'_1}\dots (x-\lambda'_{l'})^{m'_{l'}}$
for some distinct $\lambda'_1,\dots,\lambda'_{l'}\in \mathbb C.$
 Assume that the equality $\lambda_i\lambda'_j=1$ holds only if either $m_i=m_j'=1$ or $\lambda_i=\lambda'_j=1.$ Then the cokernel of the homomorphism
$$(M\otimes M')_C \oplus (\hat M\otimes \hat M')^C\longrightarrow ( \hat M \otimes \hat M')_C$$
is finite.
\end{Proposition}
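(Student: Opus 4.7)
The plan is to mimic the strategy of Proposition \ref{Proposition_omega}, reducing by additivity to a concrete base case and then analyzing the remaining obstruction. First, I would establish an additivity principle: given a short exact sequence $M_1 \mono M_2 \epi M_3$ of tame $\mathbb Z[C]$-modules (and similarly in the $M'$ variable), finiteness of the cokernel for the pairs $(M_1,M')$ and $(M_3,M')$ implies the same for $(M_2,M')$. The coinvariant rows are right exact and completion is exact, while invariants are left exact; a diagram chase through the natural transformation of the three variants applied to the sequence produces an exact (or six-term) sequence in which finite-cokernel propagates through extensions. With this in hand, Corollary \ref{Corollary_prenilpotent_submodule} splits off the prenilpotent submodules of $M$ and $M'$, and Proposition \ref{Proposition_structure_of_tame_modules} splits the torsion-free invariant-free pieces according to the irreducible factors of the minimal polynomial. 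This reduces us either to the case where at least one of the modules is prenilpotent, or to the case where $\mu_M = f^m$ and $\mu_{M'} = g^{m'}$ for some monic irreducible $f,g\in\mathbb Z[x]$.

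When at least one factor is prenilpotent, part (2) of the proof of Proposition \ref{Proposition_omega} already shows that $(M\otimes M')_C \to (\hat M\otimes\hat M')_C$ is surjective, so the cokernel is $0$. In the remaining reduced case, either no eigenvalue product $\lambda\lambda'$ equals $1$ and part (1) of Proposition \ref{Proposition_omega} gives surjectivity again, or the hypothesis of the present proposition forces $m = m' = 1$. This last ``semisimple'' case is the heart of the statement: then $a_M$ acts semisimply on $M\otimes\mathbb Q$ with irreducible minimal polynomial $f$, and $a_{M'}$ semisimply with irreducible minimal polynomial $g$, and some pair of roots satisfies $\lambda\lambda' = 1$.

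In this semisimple case, after tensoring with $\mathbb Q$ the operator $a_M\otimes a_{M'}$ on $(M\otimes M')\otimes\mathbb Q$ is itself semisimple, splitting the space into a ``good'' part, where $a_M\otimes a_{M'}-1$ is invertible, and a ``bad'' part, the $1$-eigenspace, where $t$ acts trivially. On the good part, the proof of Proposition \ref{Proposition_omega}(1) applies verbatim and delivers finite cokernel via the first summand $(M\otimes M')_C$. On the bad part, $t$ acts trivially after tensoring with $\mathbb Q$, so the natural composite $(\hat M\otimes\hat M')^C \hookrightarrow \hat M\otimes\hat M' \twoheadrightarrow (\hat M\otimes\hat M')_C$ has finite kernel and cokernel on this piece, and $(\hat M\otimes\hat M')^C$ provides the missing cover. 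Summing the two contributions gives the required finiteness. The main obstacle will be making these two ``parts'' precise at the integral level, since the eigenspace decomposition is only rational: I expect to use Lemma \ref{Lemma_completion} to write $\hat M$ as an epimorphic image of $M\otimes\mathbb Z_n$ (and similarly for $\hat M'$), then perform the good/bad splitting over $\mathbb Z_n\otimes\mathbb Z_{n'}$, where $a_M\otimes a_{M'}-1$ admits a Fitting decomposition, and finally pull back to a finite-index splitting of $\hat M\otimes \hat M'$ that is compatible with both the $C$-action and the maps from $(M\otimes M')_C$ and $(\hat M\otimes\hat M')^C$.
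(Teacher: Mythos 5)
Your treatment of the torsion-free invariant-free case is essentially the paper's step (1) (all Jordan blocks of $a_M\otimes a_{M'}$ at the eigenvalue $1$ are $1\times 1$, so $(M\otimes M')^C\otimes\mathbb C\to (M\otimes M')_C\otimes\mathbb C$ is onto, and finiteness is transported to the completions via the epimorphisms of Lemma \ref{Lemma_completion}). The genuine gap is the reduction step: the ``additivity principle'' you start from is false in the generality you state it, and the formal diagram chase you propose cannot establish it. Concretely, let $M'=\mathbb Z$ with $t$ acting by $-1$, and let $M_2=\mathbb Z^2$ with $t$ acting by $\left(\begin{smallmatrix} -1 & 1 \\ 0 & -1 \end{smallmatrix}\right)$ (the module of Lemma \ref{lemma6.1}). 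Then $M_2$ sits in a short exact sequence $M_1\mono M_2\epi M_3$ of tame modules with $M_1\cong M_3\cong M'$, and for the pairs $(M_1,M')$ and $(M_3,M')$ the action of $C$ on $\hat M_i\otimes \hat M'\cong\mathbb Z_2\otimes\mathbb Z_2$ is trivial, so the invariants already surject onto the coinvariants and the cokernel is zero (these pairs even satisfy the hypothesis of the Proposition). For the middle term, however, one computes exactly as in Lemma \ref{lemma6.1} that $(\hat M_2\otimes\hat M')^C$ maps to zero in $(\hat M_2\otimes\hat M')_C\cong \mathbb Z_2\otimes\mathbb Z_2$, while $(M_2\otimes M')_C\cong\mathbb Z$, so the cokernel is uncountable. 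Thus finite cokernel for the sub and the quotient does not propagate to the extension; if it did, the same reasoning applied to $M=M'=M_2$ would contradict Theorem \ref{theorem6}. The structural reason is the connecting homomorphism from the $C$-invariants of the completed quotient to the coinvariants of the kernel in the six-term (co)invariants sequence: invariant classes of the quotient need not lift, and their failure to lift is precisely what can blow up the cokernel in the middle, a point your ``six-term sequence through which finite-cokernel propagates'' glosses over.

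The paper's reduction is therefore deliberately conditional: it only passes through extensions $N\mono M\epi M_0$ in which $N$ is prenilpotent \emph{and} the sequence splits after $-\otimes\mathbb Q$, which is exactly what Corollary \ref{Corollary_prenilpotent_submodule} provides. The proof of that conditional step is not formal: the rational splitting forces the image of the connecting map into the torsion subgroup of the coinvariants of the kernel, and that torsion is finite because prenilpotency of $N$ makes $(\hat N\otimes\hat M')_C$ a finitely generated group; a snake-lemma argument then transfers finiteness. (Note that my counterexample above violates precisely the rational splitting, since $M_2\otimes\mathbb Q$ is a nontrivial Jordan block.) Your further d\'evissage to $\mu_M=f^m$, $\mu_{M'}=g^{m'}$ with $f,g$ irreducible rests on the same unproved additivity and is also unnecessary: the paper keeps the whole invariant-free part together, using only finite generation of $M\otimes M'$ (Lemma \ref{Lemma_tensor_product}) and the semisimplicity at the eigenvalue $1$. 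To repair your argument you would have to formulate and prove the reduction lemma with the prenilpotency and rational-splitting hypotheses, i.e.\ essentially reproduce step (2) of the paper's proof; the invariant-free endgame you sketch can then stay largely as is.
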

\begin{proof}  Corollary \ref{Corollary_prenilpotent_submodule} implies that the proof can be divided into proofs of the following two statements: (1)
the statement for torsion free invariant free modules $M,M'$; (2)
if $N\mono M\epi M_0$ is a short exact sequence of tame $\mathbb
Z[C]$-modules such that $N\otimes \mathbb Q\mono M \otimes \mathbb
Q \epi M_0\otimes \mathbb Q$ splits, $N$ is prenilpotent,  and the
statement holds for the couple $M_0,M',$ then it holds for the
couple $M,M'.$

(1) Here we prove that the cokernel of $(\hat M\otimes \hat
M')^C\longrightarrow ( \hat M \otimes \hat M')_C$ is already
finite. Set $n=\chi_M(1)$ and $n'=\chi_{M'}(1).$ Lemma
\ref{Lemma_completion} implies that there are epimorphisms
$M\otimes \mathbb Z_{n}\epi \hat M$ and $M'\otimes \mathbb Z_{n'}
\epi \hat M'.$ Using that $-\otimes (\mathbb Z_{n} \otimes \mathbb
Z_{n'})$ is an exact functor, we obtain that there is an
epimorphism $(M\otimes M')_C \otimes (\mathbb Z_n\otimes \mathbb
Z_{n'}) \epi (\hat M\otimes \hat M')_C.$ Moreover, there is an
epimorphism
$$
{\rm Coker}((M\otimes M')^C\to (M\otimes M')_C)\otimes (\mathbb
Z_n \otimes \mathbb Z_{n'}) \epi {\rm Coker}((\hat M\otimes \hat
M')^C\to ( \hat M\otimes \hat M')_C).
$$
It follows that it is enough to prove that ${\rm Coker}((M\otimes
M')^C\to (M\otimes M')_C)$ is finite. Lemma
\ref{Lemma_tensor_product} implies that $M\otimes M'$ is finitely
generated, and hence, $(M\otimes M')_C$ is a finitely generated
abelian group. It follows that it is enough to prove that
$(M\otimes M')^C\otimes \mathbb C \to (M\otimes M')_C\otimes
\mathbb C$ is an epimorphism. Eigenvalues of $a_M\otimes a_{M'}$
are products $\lambda_i\lambda_j'.$ Assume that
$\lambda_i\lambda_j'=1$ for some $i,j.$ Since $M$ and $M'$ are
invariant free, $\lambda_i\ne 1$ and $\lambda_j'\ne 1.$ Then
$m_i=1=m_j'.$ It follows that all Jordan blocks of $a_M\otimes
\mathbb C$ corresponding to $\lambda_i$ and all Jordan blocks of
$a_{M'}\otimes \mathbb C$ corresponding to $\lambda_j'$ are
$1\times 1$-matrices. It follows that all Jordan blocks of
$a_M\otimes a_{M'}\otimes \mathbb C$ corresponding to $1$ are
$1\times 1$-matrices. Hence all Jordan blocks of $B:=a_M\otimes
a_{M'}\otimes \mathbb C -1$ corresponding to $0$ are $1\times
1$-matrices. It is easy to see that, if all Jordan blocks of a
complex linear map $B:V\to V$ corresponding to $0$ are $1\times
1$-matrices, then  $V={\rm Ker}(B) \oplus {\rm Im}(B).$  It
follows that the map ${\rm Ker}(B)\to  {\rm Coker}(B)$ is an
isomorphism. Then $(M\otimes M')^C\otimes \mathbb C \to (M\otimes
M')_C\otimes \mathbb C$ is an isomorphism.

(2) Note that $\hat N=N/NI^i$ for some $i>\!>1.$ Since, $(\hat
N\otimes \hat M')_C$ can be interpret as  $\hat N\otimes_{\mathbb
Z[C]} \hat M'_\sigma,$ (tensor product over $\mathbb Z[C]$),
 we obtain  $(\hat N\otimes \hat M')_C=(N/NI^i\otimes M'/M'I^i)_C.$
 It follows that $(\hat N\otimes \hat M')_C$ is a finitely generated abelian group and the map $(N\otimes M')_C\to (\hat N\otimes \hat M')_C$ is an epimorphism.
 Set \begin{align*} & \mathcal N:=N \otimes M',\ \ \mathcal M:=M\otimes M',\\ & \mathcal M_0:=M_0\otimes
 M',\ \ \tilde{\mathcal N}:=\hat N \otimes \hat M',\\
&  \tilde{\mathcal M}:=\hat M\otimes \hat M',\ \ \
\tilde{\mathcal M}_0:=\hat M_0\otimes \hat
 M',\\
 & \tilde{\mathcal L}:= {\rm Ker}(\tilde{\mathcal M}\epi \tilde{\mathcal M}_0).
 \end{align*}
 Then $\tilde{\mathcal N}_C$ is a finitely generated abelian group and the map $\mathcal N_C \to \tilde{\mathcal N}_C$ is an epimorphism.
 Consider the exact sequence
 $$0 \to\tilde{\mathcal L}^C\to\tilde{\mathcal M}^C\to\tilde{\mathcal M}_0^C\to\tilde{\mathcal L}_C\to
\tilde{\mathcal M}_C\to (\tilde{\mathcal M}_0)_C\to 0.$$
  Since    $\tilde{\mathcal M} \otimes \mathbb Q$ $\epi$ $\tilde{\mathcal M}_0 \otimes \mathbb Q$ is a split epimorphism,
  the image of $\tilde{\mathcal M}_0^C$ $\to$ $\tilde{\mathcal L}_C$ lies in the torsion subgroup, which is finite because of
  the epimorphism $\tilde{\mathcal N}_C\epi \tilde{\mathcal L}_C.$ Then the cokernel of $\tilde{\mathcal M}^C$ $\to$ $\tilde{\mathcal M}_0^C$ is finite.
  Set \begin{align*} & Q={\rm Coker}(\mathcal M_C \oplus \tilde{\mathcal M}^C \to \tilde{\mathcal M}_C),\\
  & Q_0={\rm Coker}((\mathcal M_0)_C \oplus \tilde{\mathcal M}_0^C \to (\tilde{\mathcal M}_0)_C).\end{align*}
  Then we know that $Q_0$ is finite and ${\rm Coker}(\tilde{\mathcal M}^C$ $\to$ $\tilde{\mathcal M}_0^C)$ is finite, and we need to prove that $Q$ is finite.
  Consider the diagram with exact columns.
$$
\xyma{
\mathcal N_C \oplus \tilde{\mathcal N}^C \ar@{->}[r] \ar@{->>}[d] & \mathcal M_C \oplus \tilde{\mathcal M}^C \ar@{->}[d] \ar@{->}[r]^\alpha &
(\mathcal M_0)_C \oplus \tilde{\mathcal M_0}^C  \ar@{->}[d]\\
 \tilde{\mathcal N}_C  \ar@{->}[r] & \tilde{\mathcal M}_C \ar@{->>}[d] \ar@{->}[r] & (\tilde{\mathcal M}_0)_C \ar@{->>}[d] \\
  & Q\ar@{->}[r] & Q_0}
$$
Using the snake lemma, we obtain that ${\rm Ker}(Q\to Q_0)={\rm
Coker}( \alpha).$ Since $Q_0$ is finite and ${\rm
Coker}(\alpha)={\rm Coker}(\tilde{\mathcal M}^C\to \tilde{\mathcal
M}_0^C)$ is finite, we get that $Q$ is finite.

\end{proof}

\begin{Corollary}\label{keycor} Let $M$  be a tame $\mathbb Z[C]$-module and $\mu_M=(x-1)^{m}f_1^{m_1}\dots f_l^{m_l} $ for some distinct monic irreducible polynomials
$f_1,\dots,f_l\in \mathbb Z[x]$ such that $f_i(1)\ne 0$. Assume
that for any $1\leq i\leq l$ either $f_i(0)\notin \{-1,1\}$ or
$m_i=1.$ Then the cokernel of the homomorphism
$$ (M^{\otimes 2})_C\oplus (\hat M^{\otimes 2})^C \longrightarrow ( \hat M^{\otimes 2} )_C$$
is finite.
\end{Corollary}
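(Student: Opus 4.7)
The plan is to deduce Corollary \ref{keycor} as a direct application of Proposition \ref{onekey} in the special case $M'=M$. Once the hypothesis of Proposition \ref{onekey} is verified, its conclusion is literally the statement of the corollary, since $M\otimes M=M^{\otimes 2}$ and $\hat M\otimes \hat M=\hat M^{\otimes 2}$. So the only real task is a translation from the polynomial-factor form of the assumption on $\mu_M$ given in the corollary to the complex-eigenvalue form required by the proposition.

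First, I list the distinct complex roots of $\mu_M=(x-1)^m f_1^{m_1}\cdots f_l^{m_l}$: they are $1$ (of multiplicity $m$) together with the roots of each $f_k$ (each such root occurring with multiplicity $m_k$, by Gauss's lemma, which guarantees that the $f_k$ remain irreducible over $\mathbb Q$ and so are the minimal polynomials of their roots). Suppose $\lambda_i$ and $\lambda_j$ are two such distinct complex roots satisfying $\lambda_i\lambda_j=1$. If one of them equals $1$, then the other also equals $1$, so the only remaining case to examine is the one in which neither $\lambda_i$ nor $\lambda_j$ is $1$. In that case $\lambda_i$ is a root of some $f_k$ and $\lambda_j=\lambda_i^{-1}$ is a root of some $f_{k'}$.

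Because $\lambda_i$ and its inverse $\lambda_j$ are both algebraic integers, $\lambda_i$ is a unit in the ring of algebraic integers. Hence its norm down to $\mathbb Q$, which (up to sign) equals the constant term of its minimal polynomial $f_k$, must be $\pm 1$; that is, $f_k(0)\in\{-1,1\}$. By the hypothesis of Corollary \ref{keycor}, this forces $m_k=1$. The same argument applied to $\lambda_j$ and $f_{k'}$ yields $m_{k'}=1$. Thus the multiplicities of the two distinct complex roots $\lambda_i$ and $\lambda_j$ in $\mu_M$ are both equal to $1$, which is precisely the hypothesis of Proposition \ref{onekey}. Applying that proposition with $M'=M$ yields the desired finiteness of the cokernel.

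There is no serious obstacle: the argument is essentially the extraction of the unit property of an algebraic integer whose inverse is also an algebraic integer, followed by a direct invocation of Proposition \ref{onekey}. The only subtlety is keeping track of the two different meanings of the multiplicities $m_i$ (multiplicity of a distinct complex root in Proposition \ref{onekey} versus multiplicity of an irreducible polynomial factor in Corollary \ref{keycor}), but these are identified via $f_k$ being the minimal polynomial of each of its roots.
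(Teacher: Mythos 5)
Your proposal is correct and follows essentially the same route the paper intends: apply Proposition \ref{onekey} with $M'=M$, and verify its eigenvalue hypothesis by observing that a root $\lambda_i\neq 1$ of $\mu_M$ with $\lambda_i\lambda_j=1$ is an invertible algebraic integer, so the constant term of its minimal polynomial $f_k$ is $\pm 1$, forcing $m_k=1$ by assumption (this is exactly the argument the paper uses for the analogous corollary of Proposition \ref{Proposition_omega}). One cosmetic point: your phrase ``two \emph{distinct} complex roots'' should not exclude the case $\lambda_i=\lambda_j=-1$, but since your argument never uses distinctness it covers that case verbatim, so the proof stands.
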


\begin{Remark} We prove Propositions  \ref{Proposition_omega} and \ref{onekey} for tensor products of some modules and their completions. Further we need the same statements for exterior squares. Of course, the statements for tensor products imply the statements for exterior squares, so it is enough to prove for tensor products. Moreover, it is more convenient to prove such statements for tensor products because they have two advantages.

The first obvious advantage is that we can change modules $M$ and $M'$ in the tensor product $M\otimes M'$ independently doing some reductions to `simpler' modules.

The second less obvious advantage is the following. Let $A$ be an abelian group and $M,M'$ are $\mathbb Z[A]$-modules. Then we can interpret coinvariants of the tensor product as the tensor product over $\mathbb Z[A]$
$$(M\otimes M')_A=M\otimes_{\mathbb Z[A]} M'_{\sigma},$$
where $M'_\sigma$ is the module $M'$ with the twisted module structure $m*a=ma^{-1}.$ In particular, there is an additional nontrivial structure of $\mathbb Z[A]$-module on $(M\otimes M')_A.$ But there is no such a structure on $(\Lambda^2 M)_A.$ More precisely, the kernel of the epimorphism $$(M\otimes M)_A \epi (\Lambda^2 M)_A$$ is {\bf not} always a $\mathbb Z[A]$-submodule. For example, if $A=C=\langle t \rangle,$ $M=\mathbb Z^2$ where $t$ acts on $M$ via the matrix $\left( \begin{smallmatrix} 1 & 1 \\ 0 & 1 \end{smallmatrix}\right),$ it is easy to check that the kernel is not a submodule.

In our article \cite{IvanovMikhailov} there are two  mistakes concerning this that can be fixed easily.
\begin{enumerate}
\item On page 562 we define $\wedge^2_\sigma M$ as a quotient module of $M\otimes_\Lambda M_\sigma$ by the submodule generated by the elements $m\otimes m.$ Then we prove Corollaries 3.4 and 3.5 for such a module. In the proof of Proposition 7.2 we assume that $\wedge^2_\sigma M=(\wedge^2 M)_A$ which is the first mistake. In order to fix this mistake we have define $\wedge^2_\sigma M$ as a quotient of $M\otimes_\Lambda M$ by the {\it abelian group} generated by the elements $m\otimes m$ and prove Corollaries 3.4 and 3.5 using this definition. The prove is the same. We just need to change the meaning of the word `generated' form `generated as a module' to 'generated as an abelian group'.

\item In the proof of Lemma 7.1 we assume that $(\wedge^2 M)_A$ is an $\mathbb Z[A]$-module. This is the second mistake. In order to fix it, we have replace $(\wedge^2 M)_A$ by $(M\otimes M)_A$ in the first sentence of the proof of Lemma 7.1.
\end{enumerate}
\end{Remark}

\section{$H\mathbb Z$-localization of $M\rtimes
C$}\label{hzsection}

Now consider our group $G=M\rtimes C$ and the maximal nilpotent
submodule $N\subseteq M$, such that $(M/N)_C$ is finite. We have a
natural commutative diagram
$$
\xyma{N\ar@{>->}[r] & G\ar@{->>}[r]\ar@{->}[d]^{\eta_\omega} & G/N\ar@{->}[d]^{\eta_\omega}\\
N\ar@{>->}[r] & \hat G\ar@{->>}[r] & \widehat{G/N}}
$$
Observe that, for any $\mathbb Z[C]$-submodule $N'\subseteq N$,
$$
\widehat{G/N'}=\hat G/N'.
$$
\begin{Lemma}\label{prlemma}
For any $\mathbb Z[C]$-submodule $N'$ of $N$, there is a natural
isomorphism
$$
H_2(\eta_\omega)(G)= H_2(\eta_\omega)(G/N').
$$
\end{Lemma}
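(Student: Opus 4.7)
The plan is to compare the Lyndon--Hochschild--Serre $5$-term exact sequences associated to the two group extensions
$$N' \mono G \epi G/N' \qquad \text{and} \qquad N' \mono \hat G \epi \hat G/N' = \widehat{G/N'},$$
the equality $\hat G/N' = \widehat{G/N'}$ being exactly the observation made immediately before the lemma. The natural map $G\to \hat G$ restricts to the identity on $N'$ and descends to $G/N'\to \hat G/N'$, producing a commutative ladder
$$\xymatrix@C=10pt{
H_2(G) \ar[r]^f \ar[d]^\alpha & H_2(G/N') \ar[r] \ar[d]^\beta & N'/[G,N'] \ar[r] \ar[d]^\nu & H_1(G) \ar[r] \ar[d]^\lambda & H_1(G/N') \ar[r] \ar[d]^\mu & 0 \\
H_2(\hat G) \ar[r]^g & H_2(\hat G/N') \ar[r] & N'/[\hat G,N'] \ar[r] & H_1(\hat G) \ar[r] & H_1(\hat G/N') \ar[r] & 0
}$$
with exact rows, in which by definition $H_2(\eta_\omega)(G) = {\rm Coker}(\alpha)$ and $H_2(\eta_\omega)(G/N') = {\rm Coker}(\beta)$.

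The key step is to check that the three rightmost vertical maps $\nu,\lambda,\mu$ are isomorphisms. The maps $\lambda$ and $\mu$ are isomorphisms because $G$ is finitely presented, hence finitely generated, and so is $G/N'$; for any finitely generated group the canonical map to its pro-nilpotent completion induces an isomorphism on $H_1$ (as recalled in the introduction). For $\nu$, a direct commutator computation inside $G = M\rtimes C$ gives, for $g=(m,t^k)$ and $n'\in N'\subseteq M$, the formula $[g,n']=((t^k-1)n',1)$, using that $M$ is abelian; hence $[G,N']=N'\cdot I$ inside $M$. The same formula is valid in $\hat G$ once one knows that the conjugation action of $\hat G$ on the abelian subgroup $N'$ factors through the $C$-action, and this reduces to a computation in the nilpotent quotients $G/\gamma_n(G)$: since $N'\subseteq N$ is a nilpotent $\mathbb Z[C]$-module, $N'\cdot I^j=0$ for some $j$, so only finitely many nilpotent layers of $\hat G$ can act nontrivially on $N'$, and the action on each is inherited from that of $G$. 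Thus $[\hat G, N']=N'\cdot I$ and $\nu$ is the identity on $N'/N'I$.

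Granted that $\nu,\lambda,\mu$ are isomorphisms, exactness of the rows identifies ${\rm Coker}(f)\cong {\rm Ker}(N'/[G,N']\to H_1(G))$ and ${\rm Coker}(g)\cong {\rm Ker}(N'/[\hat G,N']\to H_1(\hat G))$, and naturality of the ladder shows that $\beta$ descends to an isomorphism $\beta_*: {\rm Coker}(f) \overset{\cong}{\to} {\rm Coker}(g)$. Applying the snake lemma to
$$\xymatrix{
H_2(G) \ar[r]^f \ar[d]^\alpha & H_2(G/N') \ar[d]^\beta \ar[r] & {\rm Coker}(f) \ar[d]^\cong \ar[r] & 0 \\
H_2(\hat G) \ar[r]^g & H_2(\hat G/N') \ar[r] & {\rm Coker}(g) \ar[r] & 0
}$$
then gives a six-term sequence whose relevant tail is $0 \to {\rm Coker}(\alpha)\to {\rm Coker}(\beta)\to 0$, and therefore the natural isomorphism $H_2(\eta_\omega)(G) = H_2(\eta_\omega)(G/N')$.

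The step I expect to be most subtle is the identification $[\hat G,N']=N'\cdot I$, since $\hat G$ is not presented to us as a semidirect product and one must argue that the pro-nilpotent conjugation action on $N'$ is controlled by the $C$-module structure; all the remaining ingredients (naturality of the $5$-term sequence, pro-nilpotent invariance of $H_1$ for finitely generated groups, snake lemma) are standard.
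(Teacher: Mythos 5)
Your setup (the ladder of five--term sequences for $N'\mono G\epi G/N'$ and $N'\mono \hat G\epi \widehat{G/N'}$, with $\nu,\lambda,\mu$ isomorphisms) only proves \emph{surjectivity} of the natural map $H_2(\eta_\omega)(G)\to H_2(\eta_\omega)(G/N')$, not injectivity, and the final snake-lemma step is where the argument breaks. In your last diagram the rows $H_2(G)\xrightarrow{f}H_2(G/N')\to \operatorname{Coker}(f)\to 0$ and $H_2(\hat G)\xrightarrow{g}H_2(\widehat{G/N'})\to \operatorname{Coker}(g)\to 0$ are only right exact ($f$ and $g$ need not be injective), so the snake lemma yields only the exact cokernel tail $\operatorname{Coker}(\alpha)\to\operatorname{Coker}(\beta)\to\operatorname{Coker}(\beta_*)\to 0$; the claimed segment $0\to\operatorname{Coker}(\alpha)\to\operatorname{Coker}(\beta)$ is exactly what is not available (the connecting map out of $\ker(\beta_*)$ is not even well defined without left exactness of the bottom row). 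A direct chase makes the missing ingredient explicit: if $w\in H_2(\hat G)$ has $g(w)=\beta(z)$, then injectivity of $\nu$ and exactness give $z=f(u)$, hence $w-\alpha(u)\in\ker(g)$; to conclude $[w]=0$ in $\operatorname{Coker}(\alpha)$ you would need $\ker\bigl(H_2(\hat G)\to H_2(\widehat{G/N'})\bigr)\subseteq\operatorname{Im}\bigl(H_2(G)\to H_2(\hat G)\bigr)$, and the ordinary five-term sequence says nothing about $\ker(g)$, since that is governed by the term to the \emph{left} of $H_2$.

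This is precisely why the paper argues differently: it first reduces, by induction on the nilpotency class of $N'$, to the case $N'(1-t)=0$, so that both extensions are \emph{central}, and then compares the Eckmann--Hilton--Stammbach sequence $(G_{\sf ab}\otimes N')/U\to H_2(G)\to H_2(G/N')\to N'\to H_1(G)\to\cdots$ for $G$ and for $\hat G$. The extra left-hand column $(G_{\sf ab}\otimes N')/U\to(\hat G_{\sf ab}\otimes N')/U$ is an isomorphism (since $G_{\sf ab}\cong\hat G_{\sf ab}$ for the finitely generated $G$), and it is exactly this extra term that identifies $\ker(g)$ with the image of $(G_{\sf ab}\otimes N')/U$, hence puts it inside $\operatorname{Im}(\alpha)$ and gives injectivity. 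Your computation $[G,N']=[\hat G,N']=N'I$ and the $H_1$ statements are fine, and they do give surjectivity without any centrality assumption; but to close the gap you either need to restore the extra term (which forces the centrality reduction, as in the paper) or supply some other control on $\ker\bigl(H_2(\hat G)\to H_2(\widehat{G/N'})\bigr)$.
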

\begin{proof}
We can present the submodule $N'$ as a finite tower of central
extensions. If we will prove that
$$
H_2(\eta_\omega)(G)=H_2(\eta_\omega)(G/N')
$$
for any $N'\subseteq N$ such that $N'(1-t)=0$, than we will be
able to prove the general statement by induction on class of
nilpotence of $N'$.

The assumption that $N'(1-t)=0$ implies that the extensions
$$
1\to N'\to G\to G/N'\to 1,\ \ \ 1\to N'\to \hat G\to
\widehat{G/N'}\to 1
$$
are central. Consider the natural map between sequences
(\ref{delta1}) for these extensions:
$$
\xyma{(G_{ab}\otimes N')/U \ar@{=}[d]\ar@{->}[r] &
H_2(G)\ar@{->}[r] \ar@{->}[d] &
H_2(G/N')\ar@{->}[r] \ar@{->}[d] & N' \ar@{=}[d] \ar@{->}[r] & H_1(G)\ar@{=}[d] \\
(\hat G_{ab}\otimes N')/U\ar@{->}[r] & H_2(\hat G)\ar@{->}[r]
\ar@{->>}[d] &
H_2(\widehat{G/N'})\ar@{->}[r] \ar@{->>}[d] & N'\ar@{->}[r] & H_1(\hat G)\\
& H_2(\eta_\omega)(G)\ar@{->}[r] & H_2(\eta_\omega)(G/N')}
$$
Elementary diagram chasing implies that the lower horizontal map
is an isomorphism and the needed statement follows.
\end{proof}
\begin{Lemma}\label{lemmao1}
If $E(G/N)=T_{\omega+1}(G/N)$, then $EG=T_{\omega+1}(G)$.
\end{Lemma}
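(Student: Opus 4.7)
The plan is to show that $T_{\omega+1}G$ is $H\mathbb Z$-local and that $\eta_{\omega+1}:G\to T_{\omega+1}G$ is an $H\mathbb Z$-map; these two properties characterise $EG$ and so force $EG=T_{\omega+1}G$. I would first reduce to the case when $N$ is central in $G$: the submodules $NI^i$ give a finite $\mathbb Z[C]$-filtration of $N$ whose consecutive quotients $NI^i/NI^{i+1}$ are $C$-trivial, so each map $G/NI^{i+1}\epi G/NI^{i}$ is a central extension, and iterating the central-case conclusion along this tower from $G/N$ upward transfers the hypothesis step by step to $G$.

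Assume now that $N$ is central in $G$. Form the pullback
$$P:=\hat G\times_{\widehat{G/N}}T_{\omega+1}(G/N).$$
The first projection $P\epi \hat G$ is a central extension with kernel $H_2(\eta_\omega)(G/N)$ (being the pullback of a central extension), and the map $\eta_\omega:G\to \hat G$ together with the composite $G\to G/N\to T_{\omega+1}(G/N)$ assembles into a canonical lift $G\to P$, making $P$ a relative central extension of $\hat G$ with respect to $\eta_\omega$. Lemma \ref{prlemma} identifies the kernel $H_2(\eta_\omega)(G/N)$ canonically with $H_2(\eta_\omega)(G)$, and the universal property of $T_{\omega+1}G$ produces a morphism $T_{\omega+1}G\to P$ of central extensions over $\hat G$ that induces this isomorphism on kernels; hence $T_{\omega+1}G\cong P$ by the five-lemma. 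A direct calculation on the pullback identifies the kernel of the other projection of $P$ with $N$, and centrality of $N$ in $\hat G$ (inherited from $G$) makes
$$0\to N\to T_{\omega+1}G\to T_{\omega+1}(G/N)\to 1$$
a central extension.

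Since $T_{\omega+1}(G/N)=E(G/N)$ is $H\mathbb Z$-local by hypothesis and the class of $H\mathbb Z$-local groups is closed under central extensions by abelian kernels, $T_{\omega+1}G$ is $H\mathbb Z$-local. For the $H\mathbb Z$-map condition it suffices to show $H_2(\eta_{\omega+1})(G)=0$, which I would obtain by transcribing the diagram chase of Lemma \ref{prlemma} to the comparison of the Eckmann--Hilbert sequences \eqref{delta} for the central extensions $N\mono G\epi G/N$ and $N\mono T_{\omega+1}G\epi T_{\omega+1}(G/N)$: since $(\eta_{\omega+1})_{ab}$ is an isomorphism the $(G_{ab}\otimes N)/U$ columns correspond, the identifications of $N$ and of the $H_1$-columns are as in Lemma \ref{prlemma}, and the chase yields $H_2(\eta_{\omega+1})(G)\cong H_2(\eta_{\omega+1})(G/N)=0$. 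The main technical step is the identification $T_{\omega+1}G\cong P$ together with the centrality of the resulting extension by $N$; once this structural picture is in place, the $H\mathbb Z$-locality is immediate and the $H_2$-surjectivity is a direct analogue of Lemma \ref{prlemma}.
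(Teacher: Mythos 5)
Your proposal is correct and follows essentially the same route as the paper: reduce to the case of a central $N$ via the filtration $NI^i$, identify $T_{\omega+1}G$ as a central extension of $T_{\omega+1}(G/N)$ by $N$ compatibly over $\hat G$ (the paper phrases this as $T_{\omega+1}(G/N)=T_{\omega+1}(G)/N$, your pullback $P$ is the same object), and then run the Lemma \ref{prlemma}-style comparison of the sequences \eqref{delta1} to get $H_2(\eta_{\omega+1})(G)\cong H_2(\eta_{\omega+1})(G/N)=0$. The only difference is the final packaging — you verify the two defining properties of $H\mathbb Z$-localization (HZ-map into an HZ-local group, using closure under central extensions), while the paper lets the vanishing of $H_2(\eta_{\omega+1})$ stabilize the tower — which is an equivalent finish.
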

\begin{proof}
First we observe that, for any $N'\subseteq N$, there is a natural
isomorphism
$$
T_{\omega+1}(G/N')=T_{\omega+1}(G)/N'.
$$
Indeed, lemma \ref{prlemma} implies that there is a natural
diagram
$$
\xyma{& N'\ar@{=}[r] \ar@{>->}[d] & N'\ar@{>->}[d]\\
H_2(\eta_\omega)(G)\ar@{=}[d] \ar@{>->}[r] & T_{\omega+1}(G)
\ar@{->>}[r]\ar@{->>}[d] & \hat G\ar@{->>}[d]\\
H_2(\eta_\omega)(G/N')\ar@{>->}[r]& T_{\omega+1}(G/N')\ar@{->>}[r]
& \widehat{G/N'}}
$$
Hence, we have a natural diagram
$$
\xyma{N'\ar@{>->}[r]\ar@{=}[d] & G\ar@{->>}[r]
\ar@{->}[d]^{\eta_{\omega+1}} &
G/N'\ar@{->}[d]^{\eta_{\omega+1}}\\ N'\ar@{>->}[r] &
T_{\omega+1}(G)\ar@{->>}[r] & T_{\omega+1}(G/N')}
$$
Again, as in the proof of lemma \ref{prlemma}, we will assume that
$N'$ is central and will prove that, in this case,
$EG=T_{\omega+1}(G)$ provided $E(G/N')=T_{\omega+1}(G/N')$.

This follows from comparison of sequences (\ref{delta1}) applied
to the above central extensions:
$$
\xyma{(G_{ab}\otimes N')/U \ar@{=}[d]\ar@{->}[r] &
H_2(G)\ar@{->}[r] \ar@{->}[d] &
H_2(G/N')\ar@{->}[r] \ar@{->}[d] & N' \ar@{=}[d] \ar@{->}[r] & H_1(G)\ar@{=}[d] \\
(\hat G_{ab}\otimes N')/U\ar@{->}[r] &
H_2(T_{\omega+1}(G))\ar@{->}[r] \ar@{->>}[d] &
H_2(T_{\omega+1}(G/N'))\ar@{->}[r] \ar@{->>}[d] & N'\ar@{->}[r] & H_1(T_{\omega+1}(G))\\
& H_2(\eta_{\omega+1})(G)\ar@{->}[r] & H_2(\eta_{\omega+1})(G/N')}
$$
Again, elementary diagram chasing shows that the lower horizontal
map is an isomorphism and the needed statement follows.
\end{proof}

\begin{Proposition}\label{keyprop}
For a tame $\mathbb Z[C]$-module $M$, the following conditions are
equivalent:\\
(i) $H\mathbb Z\text{-}{\sf length}(M\rtimes C)\leq \omega+1$;\\
(ii) the composition
$$
\Lambda^2(\hat M)^C\to \Lambda^2(\hat M)_C\to H_2(\eta_\omega)
$$
is an epimorphism.
\end{Proposition}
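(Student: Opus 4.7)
The plan is to reformulate (i) as a concrete statement about a single map and then identify that map with the composition in (ii).

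First, $H\mathbb Z\text{-}{\sf length}(G)\leq \omega+1$ is equivalent to $T_{\omega+2}G=T_{\omega+1}G$, i.e.\ to $H_2(\eta_{\omega+1})=0$, i.e.\ to the surjectivity of $H_2(G)\to H_2(T_{\omega+1}G)$. Since $(\eta_\omega)_{\sf ab}$ is an isomorphism and $\hat G = \hat M\rtimes C$ (because $\gamma_{n+1}(M\rtimes C)=MI^n$), the extension $H_2(\eta_\omega)\hookrightarrow T_{\omega+1}G\twoheadrightarrow \hat G$ is a central stem extension, to which I would apply the Eckmann--St\"ohr sequence \eqref{delta}:
\[H_3(\hat G)\xrightarrow{\delta}\hat G_{\sf ab}\otimes N\xrightarrow{\phi}H_2(T_{\omega+1}G)\xrightarrow{\psi}H_2(\hat G)\xrightarrow{\beta} N\to 0,\]
where $N := H_2(\eta_\omega)$. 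Setting $A := {\rm Im}(H_2(G)\to H_2(T_{\omega+1}G))$, the defining property of $N$ as ${\rm Coker}(H_2(G)\to H_2(\hat G))$ gives $\psi(A)=\ker\beta={\rm Im}\,\psi$. An elementary diagram chase then yields $H_2(\eta_{\omega+1})\cong {\rm Im}\,\phi/(A\cap {\rm Im}\,\phi)$, so (i) becomes equivalent to ${\rm Im}\,\phi\subseteq A$.

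Next I would decompose $\hat G_{\sf ab} = (\hat M)_C\oplus C$, giving $\hat G_{\sf ab}\otimes N = ((\hat M)_C\otimes N)\oplus N$; denote by ${\rm pr}_C$ the projection to the second summand. Let $P_1 := \phi((\hat M)_C\otimes N)$ and $I := {\rm pr}_C({\rm Im}\,\delta)\subseteq N$. Using $\ker\phi = {\rm Im}\,\delta$, the map ${\rm pr}_C$ descends to a surjection ${\rm Im}\,\phi\twoheadrightarrow N/I$ whose kernel is exactly $P_1$, so that $0\to P_1\to {\rm Im}\,\phi\to N/I\to 0$ is exact. By Proposition \ref{alpha33}, ${\rm pr}_C\circ\delta = \beta\circ\alpha_3'$; since $\alpha_3'$ factors as $H_3(\hat G)\twoheadrightarrow \Lambda^2(\hat M)^C \to \Lambda^2(\hat M)_C\hookrightarrow H_2(\hat G)$ (with the first surjection coming from the LHSSS for $\hat M\rtimes C$), the image $I$ coincides with the image of the composition in (ii). Hence (ii) is equivalent to ${\rm Im}\,\phi = P_1$.

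The heart of the argument --- and the main obstacle --- is the identification $A\cap{\rm Im}\,\phi = P_1$. Granting this, ${\rm Im}\,\phi\subseteq A$ becomes equivalent to $P_1 = {\rm Im}\,\phi$, which completes the equivalence (i)$\Leftrightarrow$(ii). The inclusion $P_1\subseteq A$ should follow by writing $T_{\omega+1}G = \tilde M\rtimes C$, where $\tilde M$ is the preimage of $\hat M$ (a two-step nilpotent group with center containing $N$), comparing the Lyndon--Hochschild--Serre spectral sequences of $G$ and $T_{\omega+1}G$, and observing that $P_1$ lies in the $H_2(\tilde M)_C$ filtration piece and is already hit by $H_2(M)_C\subseteq H_2(G)$ because $M$ has ``dense'' image in $\tilde M$ modulo $N$. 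The reverse inclusion $A\cap{\rm Im}\,\phi\subseteq P_1$, asserting that any class of $H_2(G)$ vanishing in $H_2(\hat G)$ must come from the first summand, will likely require a finer LHSSS comparison using the short exact sequences $0\to \Lambda^2(M)_C\to H_2(G)\to M^C\to 0$ and its analogue for $\hat G$.
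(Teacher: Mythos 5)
Your two reformulations are essentially correct: (i) is equivalent to ${\rm Im}\,\phi\subseteq A$, and, using Proposition \ref{alpha33} together with the surjectivity of the edge map $H_3(\hat G)\to \Lambda^2(\hat M)^C$, condition (ii) is equivalent to ${\rm Im}\,\phi=P_1$. (One small gloss: to get $\psi(A)=\ker\beta$ you need that $\beta$ is the canonical projection $H_2(\hat G)\twoheadrightarrow H_2(\eta_\omega)$, which comes from the description of $T_{\omega+1}G$ as the universal relative central extension corresponding to the identity of $H_2(\eta_\omega)$, not merely from $H_2(\eta_\omega)$ being the cokernel.) But the step you yourself call the heart of the argument, the identity $A\cap{\rm Im}\,\phi=P_1$, is not proved, and neither inclusion is close to being established by what you write. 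For $P_1\subseteq A$, the phrase ``$M$ has dense image in $\tilde M$ modulo $N$'' proves nothing about group homology: $\tilde M$ is typically uncountable while $M$ is finitely generated, and a toral class $\phi(\bar m\otimes n)$ with $\bar m\in(\hat M)_C$, $n\in N$ has no visible reason to lie in the image of $H_2(G)$. The reverse inclusion $A\cap{\rm Im}\,\phi\subseteq P_1$ is only announced as requiring ``a finer LHSSS comparison''. Since one inclusion is needed for each implication, neither direction of the proposition is actually established by your plan.

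This is also exactly where your route diverges from the paper, and the comparison shows why the gap is serious: the paper never confronts the summand $(\hat M)_C\otimes H_2(\eta_\omega)$ directly. For (ii)$\Rightarrow$(i) it first replaces $M$ by $M/N_0$, where $N_0$ is the maximal nilpotent submodule, using Lemma \ref{prlemma} (the passage does not change $H_2(\eta_\omega)$) and Lemma \ref{lemmao1} (which transfers $E=T_{\omega+1}$ back from the quotient to $G$); for $H=(M/N_0)\rtimes C$ the coinvariants $(M/N_0)_C$ are finite while $H_2(\eta_\omega)$ is divisible by \cite{IvanovMikhailov}, so $H_{\sf ab}\otimes H_2(\eta_\omega)=H_2(\eta_\omega)$ and the surjectivity in (ii) already forces $\delta$ itself to be onto, whence $EH=T_{\omega+1}H$ by the diagram \eqref{qdiag}; for (i)$\Rightarrow$(ii) it uses the equivalence of (i) with surjectivity of $\delta$ plus Proposition \ref{alpha33}. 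The inputs that make the troublesome summand disappear are precisely tameness (via the nilpotent-submodule reduction and finiteness of $(M/N_0)_C$) and divisibility of $H_2(\eta_\omega)$ (which rests on finite presentability); your sketch uses neither -- indeed it never uses tameness at all, although the statement is only asserted for tame modules -- and I see no way to obtain your key identity without some substitute for them. Either prove $A\cap{\rm Im}\,\phi=P_1$ in full, or follow the paper's reduction to $(M/N_0)\rtimes C$.
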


\begin{proof}
It follows from (\ref{delta}) and construction of
$T_{\omega+1}(G)$ that we have a natural diagram:
\begin{equation}\label{qdiag}
\xyma{& & H_2(G) \ar@{->}[d] \ar@{=}[r] & H_2(G)\ar@{->}[d]\\
H_3(\hat G) \ar@{->}[r] & G_{ab}\otimes H_2(\eta_\omega)(G)
\ar@{->}[r] & H_2(T_{\omega+1}(G)) \ar@{->}[r] & H_2(\hat
G)\ar@{->>}[d] \ar@{->>}[r] & H_2(\eta_\omega)(G)\ar@{=}[dl]\\&  &
& H_2(\eta_\omega)(G)}
\end{equation}
This diagram implies that the condition (i) for $G=M\rtimes C$ is
equivalent to the surjectivity of the map $\delta: H_3(\hat G)\to
G_{ab}\otimes H_2(\eta_\omega)(G).$ Proposition \ref{alpha33}
implies the following natural diagram
$$
\xyma{H_3(\hat G)\ar@{->}[rr] \ar@{->}[d] & & G_{ab}\otimes H_2(\eta_\omega)(G)\ar@{->>}[d]\\
\Lambda^2(\hat M)^C \ar@{->>}[rd] \ar@{->}[rr] & &
H_2(\eta_\omega)(G)\\ & \Lambda^2(\hat M)_C\ar@{->}[ru]}
$$
and the implication $(i) \Rightarrow (ii)$ follows.

Now assume that (ii) holds. Let $N$ be the maximal nilpotent
submodule of $M$ such that $(M/N)_C$ is finite. Denote
$H:=(M/N)\rtimes C$. We have a natural diagram:
$$ \xyma{H_3(\hat G)\ar@{->}[r] \ar@{->}[d] & \Lambda^2(\hat M)^C \ar@{->}[r] & \Lambda^2(\hat M)_C \ar@{->}[r] & H_2(\eta_\omega)(G)\ar@{->}[d] \\
H_3(\hat H)\ar@{->}[r] & \Lambda^2(\widehat{M/N})^C \ar@{->}[r] &
\Lambda^2(\widehat {M/N})_C \ar@{->}[r] & H_2(\eta_\omega)(H) }
$$
Lemma \ref{prlemma} implies that the right hand vertical map in
this diagram is a natural isomorphism. Condition (ii) implies that
the composition of three lower arrows in the last diagram must be
an epimorphism. Now observe that $H_{ab}\otimes
H_2(\eta_\omega)(H)=H_2(\eta_\omega)(H)$, since the group
$H_2(\eta_\omega)(H)$ is divisible and $(M/N)_C$ is finite.
Therefore, $\delta: H_3(\hat H)\to H_{ab}\otimes
H_2(\eta_\omega)(H)$ is surjective. The diagram (\ref{qdiag}) with
$G$ replaced by $H$ implies that $EH=T_{\omega+1}(H)$. Now the
statement (i) follows from lemma \ref{lemmao1}.
\end{proof}

Now we will consider the key example of a tame $\mathbb
Z[C]$-module $M$, such that $H\mathbb Z\text{-}{\sf
length}(M\rtimes C)>\omega+1$. For the construction of such an
example, recall first certain well-known properties of quadratic
functors.

Let $X_1,\dots,X_n, Y_1,\dots,Y_m$ be abelian groups and
$X=\bigoplus_{i=1}^n X_i,$ $Y=\bigoplus_{j=1}^m Y_j.$ An element
of a direct sum will be written as a column $(x_1,\dots,x_n)^{\sf
T}\in X$ and a homomorphism $f:X\to Y$ will be written as a matrix
$f=(f_{ji})$, where $f_{ji}:X_i\to Y_j$ and
$$f((x_1,\dots,x_n)^{\sf T})=(\sum_{i=1}^n f_{1i}(x_i), \dots ,
\sum_{i=1}^n f_{mi}(x_i))^{\sf T}.$$

For an abelian group $X$ we denote by $\Lambda^2 X, $ ${\sf S}^2
X$, $\Gamma^2 X$ and $X^{\otimes 2}$ its exterior, symmetric,
divided (the same as the Whitehead quadratic functor) and tensor
squares respectively. If $X$ is torsion free, then there are short
exact sequences
$$0 \xrightarrow{ \ \ \ } \Lambda^2 X \xrightarrow{ \ \iota_{\wedge} \ } X^{\otimes 2}
\xrightarrow{ \ \pi_{\sf S} \ }  {\sf S}^2 X \xrightarrow{ \ \ \ }
0, $$
$$0 \xrightarrow{ \ \ \ } \Gamma^2 X \xrightarrow{ \ \iota_{\Gamma} \ } X^{\otimes 2}
\xrightarrow{ \ \pi_{\wedge} \ }  \Lambda^2 X \xrightarrow{ \ \ \
} 0,
$$ where $\pi_\wedge, \pi_{\sf S}$ are the canonical projections
\begin{align*}& \iota_{\wedge}(x_1x_2)=x_1\otimes x_2-x_1\otimes
x_2,\\ & \iota_\Gamma(\gamma_2(x))=x\otimes x\end{align*} (see
\cite[ch.1 Section 4.3]{Illusie}). We will identify $\Gamma^2 X$
with the kernel of $\pi_\wedge$ for torsion free groups.

\begin{Lemma}\label{lemma6.1}
Let $M=\mathbb Z^2$ be the $\mathbb Z[C]$-module with the action
of $C$ given by the matrix $c=\left(\begin{smallmatrix} -1 & 1 \\
0 & -1
\end{smallmatrix} \right).$ Then $M(t-1)^{2n}=4^n M$ for any natural $n$ and $\hat M=(\mathbb Z_2)^2$ with the
action of $C$ given by the same matrix. Moreover,
\begin{align*} & \Lambda^2 \hat M=\Lambda^2 \mathbb Z_2\oplus \Lambda^2
\mathbb Z_2  \oplus \mathbb Z_2^{\otimes 2},\\ & (\Lambda^2 \hat
M)^C= \Lambda^2 \mathbb Z_2 \oplus 0 \oplus \Gamma^2 \mathbb Z_2,\\
& (\Lambda^2 \hat M)_C=0\oplus \Lambda^2 \mathbb Z_2\oplus {\sf
S}^2 \mathbb Z_2\end{align*} and the cokernel of the natural map
$$
(\Lambda^2 \hat M)^C \longrightarrow (\Lambda^2 \hat M)_C
$$
is isomorphic $ \Lambda^2 \mathbb Z_2.$
\end{Lemma}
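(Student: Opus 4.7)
The plan is a direct four-step calculation. First, matrix arithmetic gives
\[
(c-I)^2 = \begin{pmatrix} 4 & -4 \\ 0 & 4 \end{pmatrix} = 4 \begin{pmatrix} 1 & -1 \\ 0 & 1 \end{pmatrix},
\]
and since the right factor lies in $\mathsf{GL}_2(\mathbb{Z})$, we get $M(t-1)^2 = 4M$; induction yields $M(t-1)^{2n} = 4^n M$. Hence the filtrations $\{MI^n\}$ and $\{4^n M\}$ on $M = \mathbb{Z}^2$ are cofinal, so $\hat M = \varprojlim \mathbb{Z}^2/4^n\mathbb{Z}^2 = \mathbb{Z}_2^2$, and by continuity the $t$-action on $\hat M$ is still given by $c$.

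Second, I would apply the natural isomorphism of abelian groups $\Lambda^2(A_1 \oplus A_2) \cong \Lambda^2 A_1 \oplus (A_1 \otimes A_2) \oplus \Lambda^2 A_2$ to $\hat M = \mathbb{Z}_2 e_1 \oplus \mathbb{Z}_2 e_2$, obtaining summands $L_1 = \Lambda^2(\mathbb{Z}_2 e_1)$, $L_2 = \Lambda^2(\mathbb{Z}_2 e_2)$ (each identified with $\Lambda^2 \mathbb{Z}_2$), and $T = \mathbb{Z}_2 e_1 \otimes \mathbb{Z}_2 e_2 \cong \mathbb{Z}_2^{\otimes 2}$; this already establishes the asserted decomposition of $\Lambda^2 \hat M$. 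Note that this splitting is not $C$-equivariant. Using $te_1 = -e_1$ and $te_2 = e_1 - e_2$, a short expansion shows that $t$ acts as the identity on $L_1$, and the only nonzero components of $t-1$ relative to this decomposition are $T \to L_1$ given by $-\pi$ (where $\pi\colon \mathbb{Z}_2^{\otimes 2} \twoheadrightarrow \Lambda^2 \mathbb{Z}_2$ is the canonical projection), $L_2 \to L_1$ given by the canonical identification $L_2 \cong L_1$, and $L_2 \to T$ given by $-\mathsf{asym}$, where $\mathsf{asym}\colon \Lambda^2 \mathbb{Z}_2 \hookrightarrow \mathbb{Z}_2^{\otimes 2}$, $x \wedge y \mapsto x \otimes y - y \otimes x$.

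Third, the kernel and cokernel of $t - 1$ drop out of the two natural short exact sequences for a torsion-free abelian group $A$,
\[
0 \to \Lambda^2 A \xrightarrow{\mathsf{asym}} A^{\otimes 2} \to S^2 A \to 0, \qquad 0 \to \Gamma^2 A \hookrightarrow A^{\otimes 2} \xrightarrow{\pi} \Lambda^2 A \to 0.
\]
Injectivity of $\mathsf{asym}$ forces the $L_2$-coordinate of any invariant to vanish, and then the constraint on the $T$-coordinate cuts out $\ker \pi = \Gamma^2 \mathbb{Z}_2$, giving $(\Lambda^2 \hat M)^C = \Lambda^2 \mathbb{Z}_2 \oplus 0 \oplus \Gamma^2 \mathbb{Z}_2$. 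Dually, surjectivity of $\pi$ kills $L_1$ in the coinvariants, $L_2$ is untouched (since the image of $t-1$ has zero $L_2$-component), and $T$ is quotiented by $\mathsf{asym}(L_2)$ to give $S^2 \mathbb{Z}_2$; thus $(\Lambda^2 \hat M)_C = 0 \oplus \Lambda^2 \mathbb{Z}_2 \oplus S^2 \mathbb{Z}_2$.

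Fourth, the comparison map $(\Lambda^2 \hat M)^C \to (\Lambda^2 \hat M)_C$ kills the $L_1$-summand and sends $\Gamma^2 \mathbb{Z}_2 \subset T$ into $S^2 \mathbb{Z}_2$ via the canonical composite $\Gamma^2 A \hookrightarrow A^{\otimes 2} \twoheadrightarrow S^2 A$. The main obstacle I anticipate is showing this composite is surjective for $A = \mathbb{Z}_2$; I would derive it from the natural short exact sequence
\[
0 \to \Gamma^2 A \to S^2 A \to \Lambda^2(A/2A) \to 0
\]
for torsion-free $A$ (obtained by presenting $S^2 A$ via generators $a \cdot b$ and noting that modding out by $a^2$ and $2ab$ produces $\Lambda^2(A/2A)$), combined with $\mathbb{Z}_2/2\mathbb{Z}_2 = \mathbb{F}_2$ and $\Lambda^2 \mathbb{F}_2 = 0$. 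The only surviving summand of the cokernel is then $L_2 \cong \Lambda^2 \mathbb{Z}_2$, as claimed.
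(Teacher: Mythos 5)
Your proposal is correct, and for the bulk of the argument it follows the same route as the paper: compute $(c-1)^2=4\cdot(\text{unit})$ to identify $\hat M=(\mathbb Z_2)^2$, split $\Lambda^2\hat M$ into $\Lambda^2\mathbb Z_2\oplus\Lambda^2\mathbb Z_2\oplus\mathbb Z_2^{\otimes 2}$ (non-equivariantly), write $\wedge^2 c-1$ as the same three-by-three matrix with entries $\pm\pi$, $\pm\iota_\wedge$ and the identification $L_2\cong L_1$, and read off kernel and cokernel; your identifications of $(\Lambda^2\hat M)^C$ and $(\Lambda^2\hat M)_C$ agree with the paper's, including the small point that the image of $t-1$ is exactly $L_1\oplus 0\oplus\iota_\wedge(\Lambda^2\mathbb Z_2)$. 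The one place where you genuinely diverge is the last step, the surjectivity of the composite $\Gamma^2\mathbb Z_2\hookrightarrow\mathbb Z_2^{\otimes 2}\twoheadrightarrow {\sf S}^2\mathbb Z_2$: the paper proves directly that $\mathbb Z_2^{\otimes 2}$ is generated by the elements $x\otimes x$ and $x\otimes y-y\otimes x$, using the identity $2\,x\otimes y=(x\otimes y-y\otimes x)+(x+y)\otimes(x+y)-x\otimes x-y\otimes y$ together with the fact that every $2$-adic integer is $2x$ or $1+2x$; you instead invoke the natural exact sequence $0\to\Gamma^2A\to{\sf S}^2A\to\Lambda^2(A/2A)\to 0$ for torsion-free $A$ and conclude from $\mathbb Z_2/2\mathbb Z_2=\mathbb F_2$, $\Lambda^2\mathbb F_2=0$. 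Your route is more functorial and generalizes (it shows the cokernel vanishes for any torsion-free $A$ with $A/2A$ cyclic); its only extra debt is the exactness of that sequence beyond finitely generated free groups, which you should justify, e.g.\ by checking it on free abelian groups and passing to filtered colimits of the finitely generated (hence free) subgroups of a torsion-free group -- for this statement only right-exactness is needed. The paper's argument is more pedestrian but entirely self-contained and specific to $\mathbb Z_2$.
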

\begin{proof}
Set $d:=c-1=\left(\begin{smallmatrix} -2 & 1 \\ 0 & -2
\end{smallmatrix} \right).$  Since  $MI^n=M(t-1)^n=d^n(\mathbb
Z^2),$ we have $\hat M=\varprojlim\: \mathbb Z^2/d^n(\mathbb
Z^2).$ Computations show that $d^{2}=-4 c,$ and hence
$d^{2n}=(-4)^n c^n.$ Since $c$ induces an automorphism on $\mathbb
Z^2,$ we obtain $d^{2n}(\mathbb Z^2)=4^n  \mathbb Z^2.$ Thus the
filtration $d^n(\mathbb Z^2)$ of $\mathbb Z^2$ is equivalent to
the filtration $2^n\mathbb Z^2.$ It follows that $\hat M=(\mathbb
Z_2)^2$ with the action of $C$ given by $c$. Then $\Lambda^2 \hat
M\cong (\Lambda^2 \mathbb Z_2)^2\oplus \mathbb Z_2^{\otimes 2}.$
We identify the element $$xe_1 \wedge x'e_1 + ye_2 \wedge y'e_2 +
ze_1\wedge z'e_2 \in \wedge^2 \hat M$$ with the column $$(x\wedge
x',y\wedge y', z\otimes z')^{\sf T} \in (\wedge^2 \mathbb
Z_2)^2\oplus (\mathbb Z_2 \otimes \mathbb Z_2),$$ where $e_1,e_2$
is the standard basis of $(\mathbb Z_2)^2$ over $\mathbb Z_2.$ Let
us present the homomorphism $\wedge^2 \hat c: \Lambda^2 \hat M \to
\Lambda^2 \hat M$ that defines the action of $C$ as a matrix.
Since
\begin{equation*}
\begin{split}
\wedge^2 \hat c(xe_1\wedge x'e_1) &=xe_1\wedge x'e_1 \\
 \wedge^2 \hat c(ye_2 \wedge y'e_2) &=ye_1\wedge y'e_1+ye_2 \wedge y'e_2-(ye_1\wedge y'e_2 -y'e_1\wedge ye_2) \\
\wedge^2 \hat c(ze_1\wedge z'e_2)&=-ze_1\wedge z'e_1+ze_1\wedge
z'e_2,
\end{split}
\end{equation*}
we obtain
$$ \wedge^2 \hat c=
\left(\begin{smallmatrix}
1 & 1 & -\pi_\wedge\\
0 & 1 & 0 \\
0 & -\iota_\wedge & 1
\end{smallmatrix} \right), \hspace{1cm} \wedge^2 \hat c-1=
\left(\begin{smallmatrix}
0 & 1 & -\pi_\wedge\\
0 & 0 & 0 \\
0 & -\iota_\wedge & 0
\end{smallmatrix} \right).
$$
Now it is easy to compute that $$(\Lambda^2 \hat M)^C= {\rm
Ker}(\wedge^2 \hat c-1)=\Lambda^2 \mathbb Z_2 \oplus 0 \oplus
\Gamma^2 \mathbb Z_2$$ and $${\rm Im}(\wedge^2 \hat c-1)=\Lambda^2
\mathbb Z_2 \oplus 0 \oplus \iota_\wedge(\Lambda^2 \mathbb Z_2).$$
It follows that $(\Lambda^2 \hat M)_C={\rm Coker}(\wedge^2 \hat
c-1)=0\oplus \Lambda^2 \mathbb Z_2\oplus {\sf S}^2 \mathbb Z_2.$
In order to prove that $${\rm Coker}((\Lambda^2 \hat M)^C \to
(\Lambda^2 \hat M)_C)=\Lambda^2 \mathbb Z_2,$$ we have to prove
that ${\rm Coker}(\Gamma^2 \mathbb Z_2 \to \mathbb Z_2^{\otimes 2}
\to {\sf S}^2 \mathbb Z_2)=0.$ In other words we have to prove
that $\mathbb Z_2^{\otimes 2}$ is generated by elements $x\otimes
x$ and $x\otimes y-y\otimes x$ for $x,y\in \mathbb Z_2.$ Fist note
that any $2$-divisible element is generated by elements $$2
x\otimes y=(x\otimes y-y\otimes x)+(x+y)\otimes (x+y)-x\otimes
x-y\otimes y.$$ Since any element of $\mathbb Z_2$ is equal to
$2x$ or $1+2x$ for some $x\in \mathbb Z_2,$ all other elements of
$\mathbb Z_2 \otimes \mathbb Z_2$ can be presented as sums of
elements $(1+2x)\otimes (1+2y)=1\otimes 1+2(x\otimes 1+1\otimes
y+2x\otimes y).$
\end{proof}

\begin{Theorem}\label{theorem6}
Let $M$ be the $\mathbb Z[C]$-module from lemma \ref{lemma6.1} The
$H\mathbb Z$-length of the group
$$G:=M\rtimes C=\langle a,b,t\ |\ [a,b]=1,\ a^t=a^{-1}, b^t=ab^{-1}\rangle$$
is greater than $\omega+1$.
\end{Theorem}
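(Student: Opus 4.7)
The plan is to apply Proposition \ref{keyprop}: to show $H\mathbb Z\text{-}{\sf length}(G)>\omega+1$ it suffices to exhibit a nonzero element of $H_2(\eta_\omega)(G)$ not hit by the composition $\Lambda^2(\hat M)^C\to \Lambda^2(\hat M)_C\to H_2(\eta_\omega)(G).$ So I first compute each of the three groups appearing here explicitly, and then locate the obstruction.

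Since $C$ is infinite cyclic, the Lyndon--Hochschild--Serre spectral sequences for $1\to M\to G\to C\to 1$ and $1\to \hat M\to \hat G\to C\to 1$ collapse, giving
$$0\to \Lambda^2(M)_C\to H_2(G)\to M^C\to 0,\qquad 0\to \Lambda^2(\hat M)_C\to H_2(\hat G)\to \hat M^C\to 0.$$
A direct check from $c-1=\left(\begin{smallmatrix} -2 & 1 \\ 0 & -2 \end{smallmatrix}\right)$ (using that $2$ is not a zero-divisor in $\mathbb Z_2$) gives $M^C=\hat M^C=0,$ hence $H_2(G)=\Lambda^2(M)_C$ and $H_2(\hat G)=\Lambda^2(\hat M)_C.$ The induced action of $t$ on $\Lambda^2 M$ is trivial since $t(e_1\wedge e_2)=(-e_1)\wedge(e_1-e_2)=e_1\wedge e_2,$ so $H_2(G)=\mathbb Z,$ generated by $e_1\wedge e_2.$ Under the decomposition $\Lambda^2\hat M=\Lambda^2\mathbb Z_2\oplus\Lambda^2\mathbb Z_2\oplus \mathbb Z_2^{\otimes 2}$ of Lemma \ref{lemma6.1} the element $e_1\wedge e_2$ corresponds to $(0,0,1\otimes 1),$ whose image in $(\Lambda^2\hat M)_C=0\oplus \Lambda^2\mathbb Z_2\oplus {\sf S}^2\mathbb Z_2$ lies in the third summand. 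Therefore
$$H_2(\eta_\omega)(G)\ \cong\ \Lambda^2\mathbb Z_2\ \oplus\ {\sf S}^2\mathbb Z_2/\mathbb Z\!\cdot\!(1\cdot 1).$$

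Next, the image of $(\Lambda^2\hat M)^C=\Lambda^2\mathbb Z_2\oplus 0\oplus \Gamma^2\mathbb Z_2$ in $(\Lambda^2\hat M)_C$ lies entirely in the third summand: the first summand of $\Lambda^2\hat M$ is killed in passing to coinvariants (from the explicit matrix of $\wedge^2\hat c-1$ in the proof of Lemma \ref{lemma6.1} one reads off $\Lambda^2\mathbb Z_2\oplus 0\oplus 0\subseteq {\rm Im}(\wedge^2\hat c-1)$), and $\Gamma^2\mathbb Z_2\hookrightarrow \mathbb Z_2^{\otimes 2}$ projects into ${\sf S}^2\mathbb Z_2.$ Hence the composition $(\Lambda^2\hat M)^C\to H_2(\eta_\omega)(G)$ lands entirely in the ${\sf S}^2\mathbb Z_2/\mathbb Z$ summand and misses the $\Lambda^2\mathbb Z_2$ summand, which is nonzero because $\mathbb Z_2$ has uncountable $\mathbb Q$-rank. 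Proposition \ref{keyprop} then yields $H\mathbb Z\text{-}{\sf length}(G)>\omega+1.$ The delicate point is tracking which of the three Lemma \ref{lemma6.1} summands each natural map hits; the entire obstruction $\Lambda^2\mathbb Z_2$ arises from the off-diagonal $1$ in the Jordan block of $c,$ so the argument would genuinely collapse if $c$ were $-{\rm id}$ (the Klein bottle case, where the length is exactly $\omega+1$).
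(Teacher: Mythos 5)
Your proof is correct and takes essentially the same route as the paper: the paper factors the obstruction map $\delta$ through the composition $(\Lambda^2\hat M)^C\to(\Lambda^2\hat M)_C\to H_2(\eta_\omega)(G)$ via Proposition \ref{alpha33} (which is precisely the content of Proposition \ref{keyprop} that you invoke) and then reads off from Lemma \ref{lemma6.1} that the summand $\Lambda^2\mathbb Z_2$ is missed. The only difference is that you spell out the identification $H_2(\eta_\omega)(G)\cong\Lambda^2\mathbb Z_2\oplus{\sf S}^2\mathbb Z_2/\mathbb Z$ and the location of the images of the relevant maps, which the paper asserts without detail.
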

\begin{proof}
Consider the central sequence
$$
1\to H_2(\eta_\omega)(G)\to T_{\omega+1}(G)\to \hat G\to 1
$$
We will show that the cokernel of the map $\delta: H_3(\hat G)\to
G_{ab}\otimes H_2(\eta_\omega)(G)$ from (\ref{stem}) contains
$\Lambda^2(\mathbb Z_2)$. The theorem will immediately follow.

By proposition \ref{alpha33}, the map
$$
\delta: H_3(\hat G) \to G_{ab}\otimes
H_2(\eta_\omega)(G)=H_2(\eta_\omega)(G)
$$
factors as
$$
H_3(\hat G)\to (\Lambda^2\hat M)^C\to (\Lambda^2\hat
M)_C\twoheadrightarrow H_2(\eta_\omega)(G)
$$
The direct summand $\Lambda^2\mathbb Z_2$ from $(\Lambda^2\hat
M)_C$ maps isomorphically to a direct summand of
$H_2(\eta_\omega)(G)=\Lambda^2\mathbb Z_2\oplus ({\sf S}^2\mathbb
Z_2)/\mathbb Z.$ By lemma \ref{lemma6.1}, the summand
$\Lambda^2\mathbb Z_2$ lies in the cokernel of the map
$(\Lambda^2\hat M)^C\to (\Lambda^2\hat M)_C$, therefore, it lies
also in the cokernel of the composite map $(\Lambda^2\hat M)^C\to
H_2(\eta_\omega)(G)$ as well as of the map $\delta$.
\end{proof}

\begin{Theorem}\label{Theorem_omega+1} Let $G$ be a metabelian group of the form $G=M\rtimes C,$ where
$M$ is a tame $C$-module and
$\mu_M=(x-\lambda_1)^{m_1}\dots (x-\lambda_l)^{m_l}$ for some
distinct complex numbers $\lambda_1,\dots,\lambda_l$ and $m_i\geq 1.$
\begin{enumerate}
\item Assume that the equality $\lambda_i\lambda_j=1$ holds only
if  $\lambda_i=\lambda_j=1.$ Then $$H\mathbb Z\text{-}{\sf
length}(G)\leq \omega.$$ \item Assume that the equality
$\lambda_i\lambda_j=1$ holds only if either $m_i=m_j=1$ or
$\lambda_i=\lambda_j=1.$ Then $$H\mathbb Z\text{-}{\sf
length}(G)\leq \omega+1.$$
\end{enumerate}

\end{Theorem}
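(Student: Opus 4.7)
The plan is to invoke Proposition \ref{keyprop} and reduce the problem to the structural results of Section \ref{tamemodules}. By the last corollary of Section \ref{tamemodules}, the tame module $M$ admits a maximal nilpotent submodule $N$ with $M/N$ invariant-free and $(M/N)_C$ finite. Since $N$ is nilpotent, $NI^k=0$ for some $k$ and the filtration $0\subset NI^{k-1}\subset\dots\subset NI\subset N$ has trivial successive $C$-actions, so iterated application of Lemma \ref{prlemma} gives $H_2(\eta_\omega)(G)=H_2(\eta_\omega)(G/N)$, and Lemma \ref{lemmao1} transfers the conclusion $EG=T_{\omega+1}G$ back from $G/N$ to $G$. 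The eigenvalue hypotheses in (1) and (2) are inherited by $M/N$ because $N$ absorbs exactly the eigenvalue $1$ (Corollary \ref{Corollary_nilpotent_submodule}, after the harmless further quotient by torsion). From now on I assume $M$ is torsion-free and invariant-free, so no $\lambda_i$ equals $1$.

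Under this reduction $M^C=0$ by definition, and an inverse-limit argument, using that $t-1$ is injective on $M$, gives $\hat M^C=0$ as well: an element $x=(x_n)\in\hat M$ with $(t-1)x=0$ forces $x_n\in M(t-1)^{n-1}/M(t-1)^n$ at every level, and projecting the equation at level $n+1$ down to level $n$ then yields $x_n=0$. Since $\hat G=\hat M\rtimes C$, the Hochschild--Serre spectral sequences for $G$ and $\hat G$ collapse (as $C=\mathbb Z$) to the identifications $H_2(G)=(\Lambda^2 M)_C$ and $H_2(\hat G)=(\Lambda^2\hat M)_C$. For Part (1), the hypothesis after the reduction says that no product $\lambda_i\lambda_j$ equals $1$, so Proposition \ref{Proposition_omega} applied with $M'=M$ gives $(M^{\otimes 2})_C\twoheadrightarrow(\hat M^{\otimes 2})_C$; projecting to the exterior square, $H_2(G)\twoheadrightarrow H_2(\hat G)$, so $H_2(\eta_\omega)(G)=0$ and $EG=\hat G$.

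For Part (2), Proposition \ref{keyprop} reduces the problem to showing that $\Lambda^2(\hat M)^C\to H_2(\eta_\omega)(G)$ is an epimorphism. An eigenvalue/Galois-orbit analysis shows that the hypothesis of (2) on pairs $\lambda_i\lambda_j=1$ translates into the hypothesis of Corollary \ref{keycor} on the irreducible factors of $\mu_M$: if an irreducible factor $f_i$ has $f_i(0)\in\{-1,1\}$ then its roots are units, whose inverses are Galois conjugates and therefore also eigenvalues, so the hypothesis of (2) forces $m_i=1$. Corollary \ref{keycor} then yields that the cokernel of $(M^{\otimes 2})_C\oplus(\hat M^{\otimes 2})^C\to(\hat M^{\otimes 2})_C$ is finite, and projecting to the exterior square gives the same finiteness for $(\Lambda^2 M)_C\oplus(\Lambda^2\hat M)^C\to(\Lambda^2\hat M)_C$. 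Via the identifications of the second paragraph, this says that the cokernel of $\Lambda^2(\hat M)^C\to H_2(\eta_\omega)(G)$ is finite. The main obstacle at this stage is conceptual rather than computational: one invokes the result from \cite{IvanovMikhailov} that $H_2(\eta_\omega)(G)$ is divisible for finitely presented metabelian $G$; any quotient of a divisible abelian group is divisible, so the cokernel in question is simultaneously finite and divisible, hence trivial, which completes the proof.
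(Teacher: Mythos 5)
Your overall route is the paper's route: reduce modulo the largest nilpotent submodule via Lemma \ref{prlemma} and Lemma \ref{lemmao1}, identify $H_2(G)=(\Lambda^2M)_C$ and $H_2(\hat G)=(\Lambda^2\hat M)_C$ (your inverse-limit proof that $\hat M^C=0$ when $M^C=0$ is fine), use Proposition \ref{Proposition_omega} for part (1), a finite-cokernel statement for part (2), and kill the finite cokernel by divisibility of $H_2(\eta_\omega)$. But part (2) contains a genuinely false step: you derive the hypothesis of Corollary \ref{keycor} from hypothesis (2) by claiming that if an irreducible factor $f_i$ has $f_i(0)\in\{-1,1\}$ then the inverses of its roots are Galois conjugates of those roots, hence again eigenvalues. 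The Galois conjugates of a root $\lambda$ of $f_i$ are the other roots of $f_i$, whereas $\lambda^{-1}$ is a root of the \emph{reciprocal} polynomial $\pm x^{\deg f_i}f_i(1/x)$, which need not divide $\mu_M$. Concretely, take $\mu_M=f^2$ with $f=x^3-x-1$: then $f(0)=-1$, $f(1)=-1$, no two roots of $f$ multiply to $1$ (the real root is $\approx 1.32$ and the complex pair has modulus $\approx 0.87$, with product of the pair $\approx 0.75$), so hypothesis (2) holds with $m_i=2$, yet the hypothesis of Corollary \ref{keycor} fails. So your translation ``(2) $\Rightarrow$ hypothesis of \ref{keycor}'' is wrong, and a tame module realizing this (e.g.\ $\mathbb Z[t,t^{-1}]/(f(t)^2)$) exists. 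The repair is simply to bypass \ref{keycor} and apply Proposition \ref{onekey} with $M'=M$: its hypothesis is verbatim hypothesis (2), and it already gives the finite cokernel of $(M^{\otimes 2})_C\oplus(\hat M^{\otimes 2})^C\to(\hat M^{\otimes 2})_C$; this is exactly what the paper's one-line proof does (Propositions \ref{keyprop}, \ref{Proposition_omega}, \ref{onekey}).

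A secondary, smaller gap: your ``harmless further quotient by torsion'' is not covered by Lemmas \ref{prlemma} and \ref{lemmao1}, which are proved only for submodules of the largest nilpotent submodule $N$; the torsion of $M/N$ is never a nilpotent submodule when nonzero (a nonzero nilpotent submodule would meet $(M/N)^C$, contradicting invariant-freeness), so you cannot transfer $H_2(\eta_\omega)$ or $E=T_{\omega+1}$ across that extra quotient by these lemmas. Fortunately you do not need torsion-freeness: $H_2(A)\cong\Lambda^2A$ for every abelian group $A$, your $\hat M^C=0$ argument only uses injectivity of $t-1$, and Propositions \ref{Proposition_omega} and \ref{onekey} allow torsion; also ``invariant-free $\Rightarrow$ no eigenvalue $1$'' holds without torsion-freeness (if $1$ were an eigenvalue, $t-1$ would map a positive-rank submodule into the finite group ${\sf tor}(M/N)$ and could not be injective). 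So drop the torsion quotient and cite \ref{onekey} instead of \ref{keycor}; with those two changes your argument coincides with the paper's.
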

\begin{proof}
Follows from propositions \ref{keyprop}, \ref{Proposition_omega}
and \ref{onekey}.
\end{proof}

\begin{Lemma}\label{Lemma_second_homology} Let $M$ be a tame $\mathbb Z[C]$-module such that $H_2(M\rtimes C)$ is finite and
$$\mu_M=(x-\lambda_1)^{m_1}\dots (x-\lambda_l)^{m_l},$$ where
$\lambda_1,\dots,\lambda_l$ are distinct complex numbers and
$m_i\geq 1$. Then  $\lambda_i\lambda_j=1$ implies $m_i=m_j=1.$
\end{Lemma}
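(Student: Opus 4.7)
The plan is to pin down $H_2(G)$ directly, then translate its finiteness into spectral data of the operator $a_M$ on $V:=M\otimes\mathbb Q$. The Lyndon--Hochschild--Serre spectral sequence for $M\mono G\epi C$ has $H_p(C,-)=0$ for $p\ge 2$, so it degenerates into the short exact sequence already recorded at the start of Section~3,
$$0 \ \longrightarrow\ (\Lambda^2 M)_C \ \longrightarrow\ H_2(G) \ \longrightarrow\ M^C \ \longrightarrow\ 0,$$
with the left term coming from $H_2(M)=\Lambda^2 M$ and the right from $H_1(M)=M$. If $H_2(G)$ is finite, both $M^C$ and $(\Lambda^2 M)_C$ are finite.

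Next I would pass to $\mathbb Q$. Since the torsion of the tame module $M$ is finite, $M^C$ being finite is equivalent to $V^C=0$; by Lemma~\ref{Lemma_invariant_free} the latter means $\lambda_i\ne 1$ for every $i$. By Lemma~\ref{Lemma_tensor_product} the module $\Lambda^2 M$ is again tame, so the same reasoning gives $(\Lambda^2 V)_C=0$. Over $\mathbb Q$, invariants and coinvariants of a finite-dimensional $\mathbb Q[t,t^{-1}]$-module have equal dimension (by rank-nullity for $t-1$), so $(\Lambda^2 V)^C=0$ as well.

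The rest is Jordan form over $\mathbb C$. Decomposing $V\otimes_{\mathbb Q}\mathbb C=\bigoplus_i V_i$ into generalized eigenspaces of $a_M$, we have
$$\Lambda^2(V\otimes_{\mathbb Q}\mathbb C)\ =\ \bigoplus_i \Lambda^2 V_i\ \oplus\ \bigoplus_{i<j} V_i\otimes V_j,$$
with $t$ acting on $V_i\otimes V_j$ having sole generalized eigenvalue $\lambda_i\lambda_j$ and on $\Lambda^2 V_i$ having sole generalized eigenvalue $\lambda_i^2$. Suppose $\lambda_i\lambda_j=1$. If $i\ne j$, then $V_i\otimes V_j\ne 0$ is a $t$-invariant subspace whose only generalized eigenvalue is $1$; any such subspace contains a genuine $1$-eigenvector, contradicting $(\Lambda^2 V)^C=0$. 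So $i=j$, and combined with $\lambda_i\ne 1$ this forces $\lambda_i=-1$. If moreover $m_i\ge 2$, choose a Jordan block $W\subseteq V_i$ of size $m_i$; then $\Lambda^2 W\ne 0$ lies in $\Lambda^2 V_i$ and $t$ has only generalized eigenvalue $\lambda_i^2=1$ there, again contradicting $(\Lambda^2 V)^C=0$. Hence $m_i=1$, and symmetrically $m_j=1$.

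The only observation beyond routine spectral-sequence and Jordan-form bookkeeping is the standard fact that a nonzero invariant subspace on which a complex operator has $1$ as its only generalized eigenvalue contains a genuine $1$-eigenvector. I expect no real obstacle.
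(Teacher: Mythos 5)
Your proof is correct and follows essentially the same route as the paper: the Lyndon--Hochschild--Serre sequence gives finiteness of $(\Lambda^2 M)_C$, hence vanishing of the (co)invariants of $\Lambda^2(M\otimes\mathbb{C})$, and the generalized-eigenspace decomposition $\Lambda^2 V=\bigoplus_i \Lambda^2 V_i\oplus\bigoplus_{i<j}V_i\otimes V_j$ produces the contradiction in the same two cases $i=j$ and $i\neq j$. The only cosmetic differences are that you work with invariants and Jordan-block subspaces (passing from coinvariants to invariants by rank--nullity), where the paper maps onto small two-dimensional quotient modules and computes their coinvariants, and that you explicitly exclude $\lambda_i=1$ via the finiteness of $M^C$, a point the paper leaves implicit.
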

\begin{proof}
Set $V:=M\otimes \mathbb C.$ Then by Lemma
\ref{Lemma_structure_theorem_fields} we get $V=V_1\oplus \dots
\oplus V_l$ such that $\mu_{V_i}=(x-\lambda_i)^{m_i}.$ The short
exact sequence $$(\Lambda^2 M)_C \mono H_2(M\rtimes C) \epi M^C$$
implies that $(\wedge^2 M)_C$ is finite. Then $(\Lambda^2_{\mathbb
C} V)_C= (\Lambda^2 M)_C\otimes \mathbb C=0.$ Assume the contrary,
that there exist $i,j$ such that $\lambda_i\lambda_j=1$ and
$m_i\geq 2.$ Consider two cases $i=j$ and $i\ne j.$

(1) Assume that $i=j.$ Then $\lambda_i=\lambda_j=-1.$ Since
$m_i\geq 2,$ at least one of Jordan blocks of $a_M\otimes \mathbb
C$ corresponding to $-1$ has size bigger than $1.$ It follows that
there is a epimorphism $V\epi U,$ where  $U= \mathbb C^2$ and $C$
acts on $U$ by the matrix $\left( \begin{smallmatrix} -1 & \ 1 \\
\ 0 & -1 \end{smallmatrix} \right).$ The epimorphism induces a
epimorphism  $0=(\Lambda^2_{\mathbb C} V)_C \epi (\wedge_{\mathbb
C}^2 U)_C.$ From the other hand, a simple computation shows that
$(\Lambda_{\mathbb C}^2 U)_C\cong \mathbb C.$ So we get a
contradiction.

(2) Assume that $i\ne j.$ Then $\lambda_i\ne \lambda_j.$ Because
of the isomorphism of $\mathbb C[C]$-modules $$\Lambda^2_{\mathbb
C} V=(\bigoplus_k \Lambda^2_{\mathbb C} V_k) \oplus
(\bigoplus_{k<k'} V_k \otimes_{\mathbb C} V_{k'}),$$ we have an
epimorphism of $\mathbb Z[C]$-modules $\Lambda^2_{\mathbb C} V
\epi V_i\otimes_{\mathbb C} V_j.$ Since $m_i\geq 2,$ there is an
epimorphism $V_i\epi U_i,$ where $U_i=\mathbb C^2$ and $C$ acts on
$U_i$ by the matrix $\left( \begin{smallmatrix} \lambda_i &  1 \\
0 & \lambda_i
\end{smallmatrix} \right).$ Moreover, there is an epimorphism
$V_j\epi U_j,$ where $U_j=\mathbb C$ and $C$ acts on $U_j$ by the
multiplication on $\lambda_j.$ It follows that $C$ acts on
$U_i\otimes_{\mathbb C} U_j$ by the matrix $\left(
\begin{smallmatrix} 1 &  \lambda_j \\ \ 0 & 1 \end{smallmatrix}
\right).$ Thus $(U_i\otimes_{\mathbb C} U_j)_C\cong \mathbb C.$
From the other hand we have an epimorphism $0=(\Lambda_{\mathbb
C}^2 V)_C\epi (U_i\otimes U_j)_C.$ So we get a contradiction.
\end{proof}

\begin{Proposition}\label{h2pr}
Let $G$ be a metabelian finitely presented group of the form
$G=M\rtimes C$ for some $\mathbb Z[C]$-module $M$ and $H_2(G)$ is
finite. Then $$H\mathbb Z\text{-}{\sf length}(G)\leq \omega+1.$$
\end{Proposition}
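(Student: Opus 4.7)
The plan is to obtain Proposition \ref{h2pr} essentially for free by assembling two results that have just been established: Lemma \ref{Lemma_second_homology} (which uses finiteness of $H_2$ to constrain the eigenvalue/multiplicity pattern of $\mu_M$) and Theorem \ref{Theorem_omega+1}(ii) (which turns exactly that eigenvalue/multiplicity pattern into the bound $\omega+1$). The role of the finite presentation hypothesis is only to supply tameness of $M$.

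First I would verify that $M$ is a tame $\mathbb Z[C]$-module. Since $G = M \rtimes C$ is finitely presented it is in particular finitely generated, and lifts to $G$ of a finite generating set together with their $C$-conjugates generate $M$ as a $\mathbb Z[C]$-module; hence $M$ is finitely generated over $\mathbb Z[C]$. Combined with the assumption that $M \rtimes C$ is finitely presented, this is the definition of tameness recalled from Bieri--Strebel in Proposition \ref{Proposition_main_tame}.

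Next, write $\mu_M = (x - \lambda_1)^{m_1} \cdots (x - \lambda_l)^{m_l}$ with distinct complex $\lambda_i$ and $m_i \geq 1$, which makes sense by Lemma \ref{Lemma_minimal_polynomial}. Since $H_2(G)$ is finite by assumption, Lemma \ref{Lemma_second_homology} applies and tells us that $\lambda_i \lambda_j = 1$ forces $m_i = m_j = 1$. In particular the hypothesis of Theorem \ref{Theorem_omega+1}(ii) is satisfied (the conclusion $m_i = m_j = 1$ being one of the two allowed alternatives), and applying that theorem yields $H\mathbb Z\text{-}{\sf length}(G) \leq \omega + 1$.

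There is no real obstacle: the statement is a clean corollary of Theorem \ref{Theorem_omega+1} once Lemma \ref{Lemma_second_homology} is in hand, and the only thing to check beyond those invocations is the routine passage from ``$G$ finitely presented metabelian of the form $M \rtimes C$'' to ``$M$ tame,'' which is immediate from the Bieri--Strebel characterization of tame modules quoted earlier in Section \ref{tamemodules}.
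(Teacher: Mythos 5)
Your proposal is correct and follows exactly the paper's own argument: the paper derives Proposition \ref{h2pr} directly from Lemma \ref{Lemma_second_homology} together with Theorem \ref{Theorem_omega+1}(ii), with tameness of $M$ coming from the Bieri--Strebel definition since $M\rtimes C$ is finitely presented. Your extra remark on verifying finite generation of $M$ over $\mathbb Z[C]$ is the same routine check the paper leaves implicit, so there is nothing to add.
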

\begin{proof}
It follows from Lemma \ref{Lemma_second_homology} and Theorem
\ref{Theorem_omega+1}.
\end{proof}

\section{Bousfield's method}\label{boussection}
Let $G$ be a finitely presented group given by presentation
$$
\langle x_1,\dots, x_m\ |\ r_1,\dots, r_k\rangle
$$
Consider the free group $F=F(x_1,\dots, x_n)$ and an epimorphism
$F\to G$ with the kernel normally generated by $k$ elements
$\ker{F\to G}=\langle r_1,\dots, r_k\rangle^F$. Here we will study
the induced map
\begin{equation}\label{hr}
h: H_2(\hat F)\to H_2(\hat G).
\end{equation}
We follow the scheme due to Bousfield from \cite{Bousfield}. Let
$$
F_*:\ \ \ \ \ldots \begin{matrix}\longrightarrow\\[-3.5mm]\longrightarrow\\[-3.5mm]\longrightarrow\\[-3.5mm]\longleftarrow\\[-3.5mm]
\longleftarrow
\end{matrix}\ F_1\ \begin{matrix}\buildrel{d_0,d_1}\over\longrightarrow\\[-3.5mm]\longrightarrow\\[-3.5mm]
\longleftarrow \end{matrix}\ F_0(=F)\to G$$ be a free simplicial
resolution of $G$, where $F_1$ a free group with $m+k$ generators.
The structure of $F_1$ is $F(y_1,\dots, y_k)*F$, and the maps
$d_0,d_1$ are given as
\begin{align*}
& d_0: y_i\mapsto 1,\ F\buildrel{id}\over\mapsto F\\
& d_1: y_i\mapsto r_i,\ F\buildrel{id}\over\mapsto F
\end{align*}
The following short exact sequence follows from Lemma 5.4 and
Proposition 3.13 in \cite{Bousfield}:
\begin{equation}\label{lim1term} 1\to \ilimit^1_k
\pi_1(F_*/\gamma_k(F_*))\to \pi_0( \widehat{F_*})\to \hat G\to 1
\end{equation}
The first homotopy group $\pi_1(F_*/\gamma_k(F_*))$ is isomorphic
to the $k$th Baer invariant known also as $k$-nilpotent
multiplicator of $G$ (see \cite{BE}, \cite{Ellis}). If $G=F/R$ for
a normal subgroup $R\lhd F$, the Baer invariant can be presented
as the quotient
$$
\pi_1(F_*/\gamma_*(F_*))\simeq \frac{R\cap
\gamma_k(F)}{[R,\underbrace{F,\dots, F}_{k-1}]}.
$$
Now assume that, for a group $G$, the $\ilimit^1$-term vanishes in
(\ref{lim1term}), that is, there is a natural isomorphism
$$
\pi_0(\widehat{F_*})\simeq \hat G
$$
There is the first quadrant spectral sequence (see \cite{BK}, page
108)
\begin{align*}
& E_{p,q}^1=H_q(\widehat{F_p})\Rightarrow H_{p+q}(\overline{W}
(\widehat{F_*}))\\
& d_r: E_{p,q}^r\to E_{p-r, q+r-1}^r
\end{align*}
As a result of convergence of this spectral sequence, we have the
following diagram:
\begin{equation}\label{einfty}
\xyma{E_{0,1}^1=H_2(\hat F)\ar@{->>}[r] & E_{0,1}^\infty
\ar@{>->}[d]
\\ & H_2(\overline{W}(\widehat{F_*})) \ar@{->>}[r] \ar@{->>}[d] & H_2(\hat G) \\ & E_{1,0}^\infty}
\end{equation}
Since $F_1$ is finitely generated, $H_1(\widehat{F_1})\simeq
H_1(F_1)$, therefore, $E_{1,0}^\infty$ is a quotient of
$\pi_1((F_*)_{ab})=H_2(G)$. Now the diagram (\ref{einfty}) implies
the following
\begin{Lemma}\label{lemmah2}
Assuming that $\ilimit^1$-term in (\ref{lim1term}) vanishes, the
cokernel of the map (\ref{hr}) is isomorphic to a quotient of the
homology group $H_2(G)$. For a group $G$ with $H_2(G)=0$, the map
$h: H_2(\hat F)\to H_2(\hat G)$ is an epimorphism.
\end{Lemma}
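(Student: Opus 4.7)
The plan is to read off both assertions directly from diagram (\ref{einfty}) by identifying $h$ with an appropriate composition inside that diagram. First I would observe that $h$ is induced by the group homomorphism $\hat F=\widehat{F_0}\to\pi_0(\widehat{F_*})\cong\hat G$, where the final identification uses the vanishing of $\varprojlim^1$ in (\ref{lim1term}). On classifying spaces this factors through $\overline{W}(\widehat{F_*})$, so that in homology $h$ appears as
\[
H_2(\hat F)\epi E^\infty_{0,1}\mono H_2(\overline{W}(\widehat{F_*}))\epi H_2(\hat G),
\]
which is precisely the composition visible in (\ref{einfty}).

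Given this factorisation, $\mathrm{Coker}(h)$ agrees with the cokernel of the composite $E^\infty_{0,1}\to H_2(\hat G)$. Next I would use the short exact sequence $E^\infty_{0,1}\mono H_2(\overline{W}(\widehat{F_*}))\epi E^\infty_{1,0}$ arising from the filtration on the abutment: a routine snake-lemma chase against the surjection $H_2(\overline{W}(\widehat{F_*}))\epi H_2(\hat G)$ then shows that $\mathrm{Coker}(h)$ is a quotient of $E^\infty_{1,0}$. The paragraph preceding the lemma has already identified $E^\infty_{1,0}$ as a quotient of $\pi_1((F_*)_{ab})=H_2(G)$, using that $F_1$ is finitely generated so that abelianisation commutes with completion on it. Composing these surjections yields the first assertion, and specialising to $H_2(G)=0$ gives the second.

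The main obstacle is the naturality step underlying the factorisation of $h$: one must check that the spectral-sequence edge $H_2(\widehat{F_0})\to H_2(\overline{W}(\widehat{F_*}))$, followed by the projection to $H_2(\hat G)$, really recovers the homomorphism induced by $\hat F\to\hat G$. This comes down to comparing two maps into $\overline{W}(\widehat{F_*})$ that agree on $\pi_0$ (the constant simplicial inclusion of $\widehat{F_0}$ versus the augmentation of $\widehat{F_*}$), and is standard. Once it is granted, the remaining cokernel analysis is a pure diagram chase inside (\ref{einfty}).
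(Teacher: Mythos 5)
Your proposal is correct and follows essentially the same route as the paper: the paper's proof is precisely to read the lemma off diagram (\ref{einfty}), using the edge factorisation of $h$ through $E^\infty_{0,1}\mono H_2(\overline{W}(\widehat{F_*}))\epi H_2(\hat G)$ and the identification of $E^\infty_{1,0}$ as a quotient of $\pi_1((F_*)_{\sf ab})=H_2(G)$. Your added verifications (the naturality of the edge homomorphism and the cokernel chase against the filtration sequence) are exactly the steps the paper leaves implicit.
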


\section{Main construction}\label{mainsection}

\begin{Lemma}\label{newlemma} Let $K=\langle a,b\mid [[a,b],a]=[[a,b],b]=1 \rangle$ be the free 2-generated nilpotent group of class $2$ and
$C=\langle t \rangle$ acts on $K$ by the following automorphism
\begin{align*} & a\mapsto a^{-1}\\ & b\mapsto ab^{-1}.\end{align*}
Set $\tilde K:=\varprojlim K/\gamma_n(K\rtimes C).$ Then the
following holds
\begin{enumerate}
\item the pronilpotent completion of $K \rtimes C$ is equal to
$\tilde K\rtimes C;$ \item $K_{\sf ab}$ is isomorphic to the
$\mathbb Z[C]$-module from Lemma \ref{lemma6.1}; \item the obvious
map $(\tilde K)_{\sf ab}\xrightarrow{\cong}\widehat{(K_{\sf ab})}$
is an isomorphism of $\mathbb Z[C]$-modules; \item either $[\tilde
K,\tilde K]\cong \mathbb Z_2$ or $[\tilde K,\tilde K]\cong \mathbb
Z/2^m$ for some $m.$
\end{enumerate}
\end{Lemma}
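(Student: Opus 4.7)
For (1), since $\Gamma := K \rtimes C$ has abelian quotient $\Gamma/K \cong C$, one has $\gamma_n(\Gamma) \subseteq K$ for all $n \geq 2$; in particular $\gamma_n(\Gamma) \cap C = 1$, so $\Gamma/\gamma_n(\Gamma) = (K/\gamma_n(\Gamma)) \rtimes C$, and passing to the inverse limit yields $\widehat{\Gamma} = \tilde K \rtimes C$. For (2), abelianising the relations $a^t = a^{-1}$, $b^t = ab^{-1}$ gives $\bar a \mapsto -\bar a$, $\bar b \mapsto \bar a - \bar b$ on $K_{\sf ab} = \mathbb Z \bar a \oplus \mathbb Z \bar b$, which is exactly the matrix of Lemma~\ref{lemma6.1}.

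The plan for (3) and (4) is to treat them together through a single central extension. The crucial point is that $[K, K] = \langle [a, b]\rangle \cong \mathbb Z$ is central in $\Gamma$: bilinearity in the class-$2$ group $K$ gives $[a, b]^t = [a^{-1}, ab^{-1}] = [a^{-1}, b^{-1}] = [a, b]$. Hence $\Gamma/[K, K] \cong K_{\sf ab} \rtimes C$, whose lower central series computes the image of $\gamma_n(\Gamma)$ in $K_{\sf ab}$ as $K_{\sf ab} I^{n-1}$ for $n \geq 2$. Setting $d_n := [[K, K] : [K, K] \cap \gamma_n(\Gamma)]$, we obtain central extensions
$$1 \to \mathbb Z/d_n \to K/\gamma_n(\Gamma) \to K_{\sf ab}/K_{\sf ab} I^{n-1} \to 1,$$
whose inverse limit (with vanishing $\varprojlim{}^1$, by Mittag--Leffler on finite groups with surjective transitions) is the central extension
$$1 \to N \to \tilde K \to \widehat{K_{\sf ab}} \to 1, \qquad N := \varprojlim \mathbb Z/d_n.$$
Because $|\det(c - 1)| = 4$, the group $K_{\sf ab}/K_{\sf ab} I^{n-1}$ has order $4^{n-1}$, so each $K/\gamma_n(\Gamma)$ is a finite $2$-group; hence every $d_n$ is a power of $2$, and $N$ is a quotient of $\mathbb Z_2$, i.e., either $\mathbb Z_2$ or $\mathbb Z/2^m$ for some $m$.

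It remains to identify $[\tilde K, \tilde K]$ with $N$; this gives $(\tilde K)_{\sf ab} \cong \widehat{K_{\sf ab}}$ (establishing (3)) and completes (4). Since $\tilde K$ is an inverse limit of class-$\leq 2$ groups it is itself class $\leq 2$, so the commutator factors as a $\mathbb Z$-bilinear alternating pairing $\omega \colon \widehat{K_{\sf ab}} \times \widehat{K_{\sf ab}} \to N$ whose image equals $[\tilde K, \tilde K]$. At each finite level the class-$2$ formula $[x, y] = [a, b]^{\det(\bar x, \bar y)}$ holds, and taking inverse limits makes $\omega$ continuous; continuous $\mathbb Z$-bilinearity on the profinite $\mathbb Z_2$-module $\widehat{K_{\sf ab}}$ forces $\omega$ to be $\mathbb Z_2$-bilinear, so $\omega(\alpha \bar a + \beta \bar b,\ \gamma \bar a + \delta \bar b) = (\alpha \delta - \beta \gamma)\, [a, b]$. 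Since $[a, b]$ is a topological generator of $N$, the image $\mathbb Z_2 \cdot [a, b]$ is all of $N$, so $[\tilde K, \tilde K] = N$.

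The principal technical obstacle will be this last step: verifying that the determinant formula passes properly to the inverse limit (so that $\omega$ is unambiguously defined modulo the varying moduli $d_n$), establishing the continuity and $\mathbb Z_2$-bilinearity upgrade, and deducing from these that every element of $N$ is realised as a commutator $[\tilde x, \tilde y]$ in $\tilde K$.
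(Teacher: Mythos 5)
Your proposal is correct and its skeleton coincides with the paper's proof: reduce to the central extensions $\mathbb Z/d_n \mono K/\gamma_n(\Gamma) \epi K_{\sf ab}/K_{\sf ab}I^{n-1}$, pass to the inverse limit (finiteness of the terms killing $\varprojlim^1$), and then show that every element of $N=\varprojlim \mathbb Z/d_n$ is an honest commutator in $\tilde K$, which gives (3) and (4) simultaneously. The two places where you diverge from the paper are in how the key verifications are done. First, your inference ``$K_{\sf ab}/K_{\sf ab}I^{n-1}$ has order $4^{n-1}$, so $K/\gamma_n(\Gamma)$ is a finite $2$-group, hence $d_n$ is a power of $2$'' is not a formal consequence of the order of the abelianization alone; you need the standard fact that a nilpotent group whose abelianization is a finite $2$-group is itself a finite $2$-group (for the class-$2$ quotient $Q=K/\gamma_n(\Gamma)$ this is immediate because the commutator map makes $[Q,Q]$ a quotient of $\Lambda^2 Q_{\sf ab}$). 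State this, since it is exactly the point the paper instead settles by the explicit identity $[a,b]^{4^{2n}}=[a^{4^n}[a,b]^k,b^{4^n}[a,b]^l]\in\gamma_{2n+1}(\Gamma)$, which yields $K_n\cap\gamma_2(K)=\langle[a,b]^{2^{m(n)}}\rangle$. Second, your final step (continuity of the induced commutator pairing $\omega$ on $\widehat{K_{\sf ab}}$, its upgrade to $\mathbb Z_2$-bilinearity, and $N=\mathbb Z_2\cdot[a,b]$) is sound and is a structural repackaging of the paper's hands-on argument, which realizes $[a,b]^{\lambda}$ for a $2$-adic $\lambda=\sum\alpha_k2^k$ as the single commutator $[(a^{\sum_{k\leq m(n)}\alpha_k2^k})_n,\ b]$ in $\tilde K$; both versions use the surjectivity $\tilde K\epi\widehat{K_{\sf ab}}$ coming from the $\varprojlim^1$ vanishing to produce the needed lifts, and both conclude $[\tilde K,\tilde K]=N$, so $(\tilde K)_{\sf ab}\cong\widehat{K_{\sf ab}}$ and $N$ is $\mathbb Z_2$ or $\mathbb Z/2^m$. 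The technical obstacle you flag at the end (well-definedness of the determinant formula modulo $d_n$ and the continuity argument) does go through; the only genuine omission to repair is the nilpotency fact above.
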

\begin{proof} For the sake of simplicity we set $K_n:=\gamma_n(K\rtimes C).$
(1) follows from the equality $$(K\rtimes C)/K_n=(K/K_n)\rtimes
C$$ for $n\geq 2.$ (2) is obvious. Prove (3) and (4).

It is obvious that $\gamma_2(K)=Z(K)=\{[a,b]^k\mid k\in \mathbb
Z\}\cong \mathbb Z$ and $[a^i,b^j]=[a,b]^{ij}$ for all $i,j\in
\mathbb Z.$ Moreover, any element of $K$ can be uniquely presented
as $a^ib^j[a,b]^k$ for $i,j,k\in \mathbb Z.$ Set $M=K_{\sf ab}.$
We consider $M$ with the additive notation as a module over $C$.
Then $\gamma_{n+1}(M\rtimes C)=M(t-1)^n.$ Lemma \ref{lemma6.1}
implies that $\gamma_{2n+1}(M\rtimes C)=4^n M.$ Since the map
$K_{2n+1}\epi \gamma_{2n+1}(K\rtimes C)$ is an epimorphism, there
exist $k,l\in \mathbb Z$ (that depend of $n$) such that
$a^{4^n}[a,b]^k,b^{4^n}[a,b]^l\in K_{2n+1}.$ Since,
$[a,b]^{4^{2n}}=[a^{4^n}[a,b]^k,b^{4^n}[a,b]^l],$ we obtain
$[a,b]^{4^{2n}} \in K_{2n+1}.$ Then $K_{2n+1} \cap
\gamma_2(K)=\langle [a,b]^{2^{m(2n+1)}}
 \rangle$ for some natural number $m(2n+1)$ because $4^{2n}$ is divisible only on powers of $2.$ Hence $K_{n} \cap \gamma_2(K)=\langle [a,b]^{2^{m(n)}}
 \rangle$   for some $m(n).$ Then the short exact sequence $\gamma_2(K)\mono K \epi M$ induces the short exact sequence
 $$0 \longrightarrow \mathbb Z/2^{m(n)} \xrightarrow{\ \ \iota_n\ \ } K/ K_n \longrightarrow M/M(t-1)^n \longrightarrow 1,$$
 where $\iota_n(1)=[a,b].$ Since $\mathbb Z/2^{m(n)}$ and $M/M(t-1)^n$ are finite $2$-groups, the order of $K/K_n$ is equal to $2^{m'(n)}$ for some $m'(n).$
  We obtain a short exact sequence
$$0 \longrightarrow\: \varprojlim \mathbb Z/2^{m(n)} \xrightarrow{\ \ \hat{\iota}\ \ } \tilde K \longrightarrow \hat M \longrightarrow 1.$$
If $m(n)\to \infty,$ then $\varprojlim \mathbb Z/2^{m(n)}=\mathbb
Z_2,$ else $m(n)$ stabilizes and $\varprojlim\: \mathbb
Z/2^{m(n)}=\mathbb Z/2^m.$

Now it is sufficient to prove that $[\tilde K,\tilde K]={\rm
Im}(\hat \iota).$ Since $\hat M$ is abelian, $[\tilde K,\tilde
K]\subseteq {\rm Im}(\hat \iota).$  Prove that $[\tilde K,\tilde
K]\supseteq {\rm Im}(\hat \iota).$ Any element of $\tilde K$ can
be presented as a sequence $(x_n)_{n=1}^{\infty},$ where $x_n \in
K/K_n$ such that $x_n \equiv x_{n+1} \ {\rm mod}\ K_n.$ Any
element of $\varprojlim \mathbb Z/2^{m(n)}$ can be presented as an
image of a $2$-adic integer $\sum_{k=0}^\infty \alpha_k 2^k,$
where $\alpha_k\in \{0,1\}.$
 Then an element of  ${\rm Im}(\hat \iota)$ can be presented as a sequence
 $$([a,b]^{\sum_{k=0}^{m(n)} \alpha_k2^k} )_{n=1}^{\infty}=([a^{\sum_{k=0}^{m(n)}
 \alpha_k2^k},b])_{n=1}^{\infty}.$$
 Note that the element $(a^{\sum_{k=0}^{m'(n)} \alpha_k2^k})_{n=1}^{\infty}$ is a well defined element of $\tilde K$ because
 $a^{2^{m'(n)}}\in K_n.$ It follows that
 $$([a,b]^{\sum_{k=0}^{m(n)} \alpha_k2^k} )_{n=1}^{\infty}=[(a^{\sum_{k=0}^{m(n)}
 \alpha_k2^k})_{n=1}^{\infty},(b)_{n=1}^{\infty}],$$
 and hence, $[\tilde K,\tilde K]\supseteq {\rm Im}(\hat \iota).$
\end{proof}

\begin{Theorem}
Let $F$ be a free group of rank $\geq 2$. Then $H\mathbb
Z\text{-}{\sf length}(F)\geq \omega+2$.
\end{Theorem}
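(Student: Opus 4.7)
The plan is to compare the $H\mathbb Z$-tower of $F$ with that of the auxiliary group $\Gamma$ from~\eqref{rootgroup1} via a surjection $F\epi\Gamma$, using Bousfield's simplicial-resolution technique from Section~\ref{boussection}. Such a surjection exists because the relation $b^t=ab^{-1}$ yields $a=t^{-1}btb$, so $\Gamma=\langle b,t\rangle$ is $2$-generated, and any free group of rank $\geq 2$ thus surjects onto $\Gamma$.

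Two facts about $\Gamma$ must be established in advance. First, $H_2(\Gamma)$ is finite: apply~\eqref{delta} to the central stem extension $\mathbb Z\cong[K,K]\mono\Gamma\epi G$, use the Lyndon--Hochschild--Serre computation $H_2(G)\cong\mathbb Z$ and the fact that the transgression $\beta\colon H_2(G)\to[K,K]$ is an isomorphism (the extension is classified by a generator of $H^2(G,\mathbb Z)$), concluding that $H_2(\Gamma)$ embeds into the finite torsion part of $\Gamma_{\sf ab}\otimes[K,K]=\mathbb Z/4\oplus\mathbb Z$. Second, $H\mathbb Z\text{-}{\sf length}(\Gamma)>\omega+1$, witnessed by an \emph{infinite} subgroup of $H_2(\eta_{\omega+1})(\Gamma)$: mimicking Theorem~\ref{theorem6}, combine $\hat\Gamma=\tilde K\rtimes C$ with $\tilde K_{\sf ab}=\hat M$ from Lemma~\ref{newlemma} and the summand $\Lambda^2\mathbb Z_2\subseteq{\rm Coker}((\Lambda^2\hat M)^C\to(\Lambda^2\hat M)_C)$ from Lemma~\ref{lemma6.1}, feed them through a variant of Proposition~\ref{alpha33}, and verify that $\Lambda^2\mathbb Z_2$ survives in $H_2(\eta_{\omega+1})(\Gamma)$ after quotienting by the finite $H_2(\Gamma)$.

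With these in place, I would transfer via Bousfield's scheme. Take a free simplicial resolution $F_\ast\to\Gamma$ with $F_0=F$; finiteness of $H_2(\Gamma)$ makes the Baer invariants Mittag--Leffler, so the $\ilimit^1$ term in~\eqref{lim1term} vanishes and $\pi_0(\widehat{F_\ast})=\hat\Gamma$. Lemma~\ref{lemmah2} then gives that $h\colon H_2(\hat F)\to H_2(\hat\Gamma)$ has finite cokernel. Using $H_2(F)=0$ and $F_{\sf ab}\epi\Gamma_{\sf ab}$, naturality of~\eqref{delta} applied to the stem extensions $H_2(\hat F)\mono T_{\omega+1}(F)\epi\hat F$ and $H_2(\eta_\omega)(\Gamma)\mono T_{\omega+1}(\Gamma)\epi\hat\Gamma$ forces the image of the composite $H_2(T_{\omega+1}(F))\to H_2(T_{\omega+1}(\Gamma))\to H_2(\eta_{\omega+1})(\Gamma)$ to have finite index; since the infinite summand $\Lambda^2\mathbb Z_2$ lies in the target, this image is nonzero. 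Therefore $H_2(T_{\omega+1}(F))=H_2(\eta_{\omega+1})(F)\neq 0$, whence $\gamma_{\omega+1}(EF)\neq 1$ and $H\mathbb Z\text{-}{\sf length}(F)\geq\omega+2$.

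The hardest step I expect is the second preliminary, establishing $H\mathbb Z\text{-}{\sf length}(\Gamma)>\omega+1$. Unlike the setting of Theorem~\ref{theorem6}, the kernel $\tilde K$ of $\hat\Gamma\epi C$ is nonabelian (class $2$ nilpotent), so Proposition~\ref{alpha33} does not apply verbatim; one must extend the factorization of $\delta$ through $(\Lambda^2\tilde K_{\sf ab})_C$ in the presence of the central commutator $[\tilde K,\tilde K]$ described in Lemma~\ref{newlemma}, and verify that its contribution to $H_3(\hat\Gamma)$ cannot absorb the candidate $\Lambda^2\mathbb Z_2$ class.
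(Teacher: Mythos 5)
Your overall architecture is the same as the paper's: surject $F$ onto the two-generated group $\Gamma$ of (\ref{rootgroup1}), use finiteness of $H_2(\Gamma)$ to kill the $\ilimit^1$-term in (\ref{lim1term}) and get, via Lemma \ref{lemmah2}, a finite cokernel for $H_2(\hat F)\to H_2(\hat\Gamma)$, and then transfer the largeness of ${\rm Coker}(\delta)(\Gamma)$ to ${\rm Coker}(\delta)(F)$ through the naturality of (\ref{delta}), concluding $H_2(T_{\omega+1}(F))=H_2(\eta_{\omega+1})(F)\neq 0$. But the heart of the argument --- that ${\rm Coker}(\delta)(\Gamma)$ is nonzero (indeed maps onto $\Lambda^2\mathbb Z_2$) --- is precisely the step you leave open: you say one must ``extend the factorization of $\delta$ through $(\Lambda^2\tilde K_{\sf ab})_C$'' and ``verify that the contribution of $[\tilde K,\tilde K]$ cannot absorb the candidate class,'' and you flag it as unresolved. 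That is a genuine gap, and the paper never extends Proposition \ref{alpha33} to the nonabelian kernel $\tilde K$. Instead it compares $\Gamma$ with its metabelian quotient $G=M\rtimes C$ of (\ref{rootgroup}) via the diagram (\ref{diav}): by Theorem \ref{theorem6} the summand $\Lambda^2\mathbb Z_2$ survives in ${\rm Coker}(\delta)(G)$, and it is hit from one of the summands $\Lambda^2\mathbb Z_2$ of $H_2(\hat M)=\Lambda^2\hat M$ (Lemma \ref{lemma6.1}); then the five-term sequence $H_2(\tilde K)\to H_2(\hat M)\twoheadrightarrow[\tilde K,\tilde K]$ combined with Lemma \ref{newlemma}(4) --- $[\tilde K,\tilde K]$ is $\mathbb Z_2$ or finite, hence has no divisible subgroups --- shows these divisible summands lie in the image of $H_2(\tilde K)$; finally, since $\hat M^C=0$, the map $H_2(\tilde K)\to H_2(\eta_\omega)(\Gamma)$ is onto, so the class $\Lambda^2\mathbb Z_2\subseteq{\rm Coker}(\delta)(G)$ is in the image of $H_1(\Gamma)\otimes H_2(\eta_\omega)(\Gamma)$ and therefore ${\rm Coker}(\delta)(\Gamma)$ surjects onto $\Lambda^2\mathbb Z_2$. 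This divisibility trick is the missing idea; without it (or a genuine substitute) your second preliminary, and hence the whole proof, is incomplete.

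A secondary flaw: your finiteness argument for $H_2(\Gamma)$ does not work as written. Applying (\ref{delta}) to $[K,K]\mono\Gamma\epi G$ with $\beta$ an isomorphism only exhibits $H_2(\Gamma)$ as a \emph{quotient} of $\Gamma_{\sf ab}\otimes[K,K]\cong\mathbb Z/4\oplus\mathbb Z$, not as a subgroup of its torsion part; finiteness would require controlling the image of $\delta:H_3(G)\to\Gamma_{\sf ab}\otimes[K,K]$, which you do not do. The paper gets it directly from the extension $K\mono\Gamma\epi C$: $H_1(C,K_{\sf ab})=M^C=0$ and $H_2(K)_C\cong\mathbb Z/4$, so $H_2(\Gamma)$ is finite. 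Likewise, your transfer step (``forces the image of the composite to have finite index'') is best run at the level of the cokernels of $\delta$ as in (\ref{mapd}), using $H_1(F)\twoheadrightarrow H_1(\Gamma)$ and the finite cokernel on $H_2(\eta_\omega)$; that part is routine bookkeeping, and the essential gap remains the first one.
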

\begin{proof}
Consider the following group:
$$
\Gamma:=\langle a,b,t\ |\ [[a,b],a]=[[b,a],b]=1,\ a^t=a^{-1},\
b^t=ab^{-1}\rangle
$$
Observe that $\Gamma$ is the semidirect product $K\rtimes C$ from
the previous lemma.

Consider the natural diagram induced by the projection $K\to
K_{ab}=M$, i.e. by $\Gamma=K\rtimes C\twoheadrightarrow G=M\rtimes
C$:
\begin{equation}\label{diav} \xyma{H_3(\hat \Gamma)\ar@{->}[r]^{\delta\
\ \ \ \ \ \ \ \ }
\ar@{->}[d] & H_1(\Gamma)\otimes H_2(\eta_\omega)(\Gamma)\ar@{->}[d]\ar@{-->}[rd]\\
H_3(\hat G)\ar@{->}[r]^{\delta\ \ \ \ \ \ \ \ } & H_1(G)\otimes
H_2(\eta_\omega)(G)\ar@{->>}[r] & {\rm Coker}(\delta)(G)}
\end{equation}

It is shown in the proof of theorem \ref{theorem6} that ${\rm
Coker}(\delta)(G)$ contains $\Lambda^2\mathbb Z_2$. This term
$\Lambda^2\mathbb Z_2$ is an epimorphic image of one of the terms
$\Lambda^2\mathbb Z_2$ in
$$
H_2(\hat M)=\Lambda^2\hat M=\Lambda^2\mathbb Z_2\oplus
\Lambda^2\mathbb Z_2\oplus \mathbb Z_2\otimes \mathbb Z_2
$$
(see lemma \ref{lemma6.1}). By lemma \ref{newlemma}, there is a
short exact sequence
$$
H_2(\tilde K)\to H_2(\hat M)\twoheadrightarrow [\tilde K, \tilde
K]
$$
Now, by lemma \ref{newlemma}, $[\tilde K, \tilde K]$ is either
$\mathbb Z_2$ or a finite group, therefore, the image of both
terms $\Lambda^2\mathbb Z_2$ in $[\tilde K, \tilde K]$ are zero
(2-adic integers do not contain divisible subgroups). In
particular, the term $\Lambda^2\mathbb Z_2$ which maps
isomorphically onto a subgroup of ${\rm Coker}(\delta)(G)$ lies in
the image of $H_2(\tilde K)$. The natural square
$$
\xyma{H_2(\tilde K)\ar@{->>}[r]\ar@{->}[d] &
H_2(\eta_\omega)(\Gamma)\ar@{->}[d]\\ H_2(\hat M)\ar@{->>}[r] &
H_2(\eta_\omega)(G)}
$$
is commutative and  we conclude that the diagonal arrow in
(\ref{diav}) maps onto the subgroup $\Lambda^2\mathbb Z_2$ of
${\rm Coker}(\delta)(G)$. Hence, ${\rm Coker}(\delta)(\Gamma)$
maps epimorphically onto $\Lambda^2\mathbb Z_2$.

Now lets prove that the second homology $H_2(\Gamma)$ is finite.
Since the group $\Gamma$ is the semi-direct product $K\rtimes C$,
its homology is given as
$$
0\to H_2(K)_C\to H_2(\Gamma)\to H_1(C, K_{ab})\to 0
$$
The right hand term is zero: $H_1(C, K_{ab})=H_1(C,M)=M^C=0$. It
follows immediately that
$H_2(K)_C=(\gamma_3(F(a,b))/\gamma_4(F(a,b)))_C=\mathbb Z/4$.

Observe that the group $\Gamma$ can be defined with two generators
only. Let $F$ be a free group of rank $\geq 2$. Consider a free
simplicial resolution of $\Gamma$ with $F_0=F$: $F_*\to \Gamma$.
Since $H_2(\Gamma)$ is finite, all Baer invariants of $\Gamma$ are
finite (see, for example, \cite{Ellis})
$$
\ilimit^1 \pi_1(F_*/\gamma_k(F_*))=0
$$
and, by lemma \ref{lemmah2}, the cokernel of the natural map
$H_2(\hat F)\to H_2(\hat \Gamma)$ is finite.

We have a natural commutative diagram:
\begin{equation}\label{mapd} \xyma{H_3(\hat F)\ar@{->}[r]^{\delta\
\ \ \ \ \ \ \ \ }\ar@{->}[d] & H_1(F)\otimes
H_2(\eta_\omega)(F)\ar@{->}[d]\ar@{->>}[r] & {\rm Coker}(\delta)(F)\ar@{->}[d]\\
H_3(\hat \Gamma)\ar@{->}[r]^{\delta\ \ \ \ \ \ \ \ \ } &
H_1(\Gamma)\otimes H_2(\eta_\omega)(\Gamma)\ar@{->>}[r] & {\rm
Coker}(\delta)(\Gamma)}
\end{equation}
with $H_2(\eta_\omega)(F)=H_2(\hat F)$. Since the cokernel of
$H_2(\hat F)\to H_2(\hat \Gamma)$ is finite,
$$
{\rm Coker}\{{\rm Coker}(\delta)(F)\to {\rm
Coker}(\delta)(\Gamma)\}
$$
also is finite. However, ${\rm Coker}(\delta)(\Gamma)$ maps onto
$\Lambda^2\mathbb Z_2$, as we saw before, hence ${\rm
Coker}(\delta)(F)$ is uncountable. Therefore, by (\ref{delta}),
$H_2(T_{\omega+1}(F))\neq 0$ and the statement is proved.
\end{proof}

\section{Alternative approaches}
In general, given a group, the description of its pro-nilpotent
completion is a difficult problem. If a group is not
pre-nilpotent, its pro-nilpotent is uncountable and may contain
complicated subgroups. In this paper, as well as in
\cite{IvanovMikhailov} we essentially used the explicit structure
of pro-nilpotent completion for metabelian groups. Now we observe
that, there is a trick which gives a way to show that some groups
have $H\mathbb Z$-length greater than $\omega$ without explicit
description of their pro-nilpotent completion. In a sense, the
Bousfield scheme described above also gives such a method, however
in that way one must compare the considered group with a group
with clear completion. The trick given bellow is different.

We put $\Phi_iH_2(G)={\rm Ker}\{H_2(G)\to
H_2(G/\gamma_{i+1}(G))\}.$ Then $\Phi_iH_2(G)$ is the Dwyer
filtration on $H_2(G)$ (see \cite{Dw}).

\begin{Proposition}
Let $G$ be a group with the following properties:\\ (i)
$\gamma_\omega(G)\neq\gamma_{\omega+1}(G)$\\
(ii) $\cap_i\Phi_iH_2(G/\gamma_\omega(G))=0$.\\
Then the $H\mathbb Z$-length of $G$ is greater than $\omega$.
\end{Proposition}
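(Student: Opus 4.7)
The plan is to show $H_2(\eta_\omega)(G)\neq 0$, from which the conclusion follows: by the proposition characterising $T_{\omega+1}G$ as the universal relative central extension of $T_\omega G$ with kernel $H_2(\eta_\omega)(G)$, this inequality forces $T_{\omega+1}G\neq T_\omega G$, i.e.\ $\gamma_\omega(EG)\neq\gamma_{\omega+1}(EG)$, and transfinite nilpotence of $EG$ then gives $\gamma_\omega(EG)\neq 1$, so $H\mathbb Z\text{-}{\sf length}(G)>\omega$.

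Set $G_\omega:=G/\gamma_\omega(G)$. The first preparatory observation is that every nilpotent group is $H\mathbb Z$-local: abelian groups are local because ${\sf Mor}(-,A)$ only sees $H_1$, and one inducts on nilpotency class using closure under central extensions from \cite[Theorem 3.10]{Bousfield}. Combined with the universal property of $T_n G$ developed in Section 2, this yields the canonical identification $T_n G=G/\gamma_n(G)$ for every finite $n\geq 2$. In particular $T_\omega G=EG/\gamma_\omega(EG)$ is residually nilpotent with the same nilpotent quotients as $G_\omega$, so $\eta_\omega$ factors as $G\twoheadrightarrow G_\omega\xrightarrow{\,g\,}T_\omega G$. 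Applying the five-term exact sequence to $1\to\gamma_\omega(G)\to G\to G_\omega\to 1$, and using $\gamma_\omega(G)\subseteq [G,G]$ to see that the induced map on abelianisations is an isomorphism, one reads off that the cokernel of $f:H_2(G)\to H_2(G_\omega)$ equals $\gamma_\omega(G)/\gamma_{\omega+1}(G)$, which is nonzero by hypothesis (i).

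The heart of the argument is to show $g_*:H_2(G_\omega)\to H_2(T_\omega G)$ is injective. Granting this, the natural map ${\rm coker}(f)\to{\rm coker}(g_*\circ f)=H_2(\eta_\omega)(G)$ is itself injective (elementary diagram chase using $\ker g_*=0$), so $H_2(\eta_\omega)(G)\supseteq\gamma_\omega(G)/\gamma_{\omega+1}(G)\neq 0$ and we are done. To prove injectivity, fix $x\in\ker g_*$ and consider, for each $i\geq 1$, the commutative square of groups
$$
\xyma{G_\omega \ar@{->}[r]^g \ar@{->>}[d] & T_\omega G \ar@{->>}[d]\\
G/\gamma_{i+1}(G) \ar@{=}[r] & T_{i+1}G}
$$
whose vertical maps are the canonical projections and whose bottom row is the identity under the identification $T_{i+1}G=G/\gamma_{i+1}(G)$. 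Applying $H_2$ and chasing $x$ around the square shows that the image of $x$ in $H_2(G/\gamma_{i+1}(G))$ coincides with that of $g_*(x)=0$ and is therefore zero, i.e.\ $x\in\Phi_i H_2(G_\omega)$ for every $i$; hypothesis (ii) then forces $x=0$.

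The step I expect to require the most care is the identification $T_n G\cong G/\gamma_n(G)$ for finite $n$: it is precisely this identification that makes the bottom row of the key square an equality and lets one convert the a priori weaker vanishing of $g_*(x)$ in $H_2(T_{i+1}G)$ into the Dwyer filtration condition demanded by hypothesis (ii). Once that identification is in place, the rest is a five-term computation together with the diagram chase above.
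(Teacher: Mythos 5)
Your proof is correct, and at its core it runs on the same mechanism as the paper's: the five-term sequence for $1\to\gamma_\omega(G)\to G\to G/\gamma_\omega(G)\to 1$ identifies the cokernel of $H_2(G)\to H_2(G/\gamma_\omega(G))$ with $\gamma_\omega(G)/\gamma_{\omega+1}(G)$, and hypothesis (ii) is used to say that the kernel of the map from $H_2(G/\gamma_\omega(G))$ into the homology of the completion-type object sits inside $\cap_i\Phi_i$ and hence vanishes. The packaging differs in three small but real ways. The paper argues by contradiction, maps into $H_2(\hat G)$, and quotes Dwyer's theorem that $G/\gamma_\omega(G)\to\hat G$ induces isomorphisms on $H_2/\Phi_i$; you instead work directly with $T_\omega G=EG/\gamma_\omega(EG)$, prove the injectivity of $H_2(G/\gamma_\omega(G))\to H_2(T_\omega G)$ by hand from the commuting squares over the finite nilpotent quotients, and then read off $H_2(\eta_\omega)(G)\neq 0$ directly from the Section 2 description of $T_{\omega+1}G$. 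Your route has the mild advantage of never passing through $\hat G$, so it does not lean on the finitely generated identification $EG/\gamma_\omega(EG)\cong\hat G$ nor on the criterion ``length $\leq\omega$ iff $H_2(G)\to H_2(\hat G)$ is onto'' that the introduction states only for finitely generated groups; it is an honest verification that the cokernel governing the $\omega\to\omega+1$ step of the tower is nonzero. The one place you should tighten is the identification $T_nG\cong G/\gamma_n(G)$ for finite $n$: your sketch via ``nilpotent groups are $H\mathbb Z$-local plus the universal property'' can be completed, but the clean justification is simply that $\eta:G\to EG$ is an $H\mathbb Z$-map, so Stallings' theorem (or Bousfield's corresponding statement) gives isomorphisms on all finite lower central quotients compatibly with $\eta_n$; this is exactly what makes your bottom rows identities and your Dwyer-type injectivity argument go through.
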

\begin{proof}
All ingredients of the proof are in the following diagram with
exact horizontal and vertical sequences:
$$
\xyma{& H_2(G) \ar@{->}[d] \ar@{->}[rd]\\
ker\ar@{>->}[r]\ar@{>->}[d] &
H_2(G/\gamma_\omega(G))\ar@{->}[r] \ar@{->>}[d] & H_2(\hat G)\\
\cap_i\Phi_iH_2(G/\gamma_\omega(G)) &
\gamma_\omega(G)/\gamma_{\omega+1}(G)}
$$
The fact that the kernel lies in
$\cap_i\Phi_iH_2(G/\gamma_\omega(G))$ follows from the standard
property of Dwyer's filtration: the map $G/\gamma_\omega(G)\to
\hat G$ induces isomorphisms on $H_2/\Phi_i$ for all $i$ (see
\cite{Dw}). The vertical exact sequence is the part of 5-term
sequence in homology. Now assume that the map $H_2(G)\to H_2(\hat
G)$ is an epimorphism. Then $H_2(G/\gamma_\omega(G))\to H_2(\hat
G)$ is an epimorphism as well. However, condition (ii) implies
that the last map is a monomorphism as well. Hence, it is an
isomorphism and surjectivity of $H_2(G)\to H_2(\hat G)$
contradicts the property (i).
\end{proof}

An example of a group with satisfies both conditions (i) and (ii)
from proposition, is the group
$$
\langle a,b\ |\ [a,b^3]=[[a,b],a]^2=1\rangle
$$
(see \cite{MikhailovPassi}, examples 1.70 and 1.85). However, in
this example, $G/\gamma_\omega(G)$ is metabelian and one can use
explicit description of its pro-nilpotent completion to show the
same result. The above proposition can be used for more
complicated groups.

 Now we
consider another example of finitely generated metabelian group of
the type $M\rtimes C$ with $H\mathbb Z$-length greater than
$\omega+1$. In our example, $M=\mathbb Z[C]$ with a regular action
of $C$ as multiplication. This is not a tame $\mathbb Z[C]$-module
and the group is not finitely presented. The proof of the bellow
result uses functorial arguments.
\begin{Theorem}\label{zwrz}
Let $$G=\mathbb Z[C]\rtimes C=\mathbb Z\wr\mathbb Z=\langle a,b\
|\ [a,a^{b^i}]=1,\ i\in \mathbb Z\rangle,$$ then $H\mathbb
Z\text{-}{\sf length}(G)\geq \omega+2$.
\end{Theorem}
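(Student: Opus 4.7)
The plan is to derive the bound $H\mathbb{Z}\text{-}{\sf length}(G)\geq \omega+2$ by a functorial comparison with the root group $G_0 := \mathbb{Z}^2 \rtimes C$ from Theorem \ref{theorem6}, whose length is already known to exceed $\omega+1$. The $\mathbb{Z}[C]$-module surjection $\mathbb{Z}[C] \twoheadrightarrow \mathbb{Z}[C]/(t+1)^2 \cong \mathbb{Z}^2$ induces a group surjection $\pi\colon G \twoheadrightarrow G_0$; by functoriality of the $H\mathbb{Z}$-localization tower, $\pi$ yields the commutative square
$$
\xyma{
H_2(G) \ar@{->}[r]\ar@{->}[d] & H_2(T_{\omega+1}(G)) \ar@{->}[d] \\
H_2(G_0) \ar@{->}[r] & H_2(T_{\omega+1}(G_0)).
}
$$
By Theorem \ref{theorem6} the bottom arrow is not surjective (this is equivalent to $H_2(\eta_{\omega+1})(G_0)\neq 0$). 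I will show that \emph{both} vertical maps are surjective; granting this, if the top arrow were surjective then the image of $H_2(G)\to H_2(T_{\omega+1}(G_0))$ would coincide on one hand with the image of the right vertical (all of $H_2(T_{\omega+1}(G_0))$) and on the other hand factor through $H_2(G_0)$, forcing the bottom arrow to be surjective. This contradiction gives $H_2(\eta_{\omega+1})(G) \neq 0$, i.e.\ $T_{\omega+2}(G) \neq T_{\omega+1}(G)$, so $H\mathbb{Z}\text{-}{\sf length}(G)\geq \omega+2$.

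Surjectivity of the left vertical is immediate: both $M=\mathbb{Z}[C]$ and $N=\mathbb{Z}^2$ are invariant-free as $\mathbb{Z}[C]$-modules ($\mathbb{Z}[C]$ is a domain, and $\det(t-1)=4$ on $\mathbb{Z}^2$), so the Lyndon--Hochschild--Serre spectral sequence collapses to $H_2(M\rtimes C)\cong (\Lambda^2 M)_C$ (and similarly for $N$), and the induced map is surjective because $\Lambda^2$ and $C$-coinvariants both preserve epimorphisms. For the right vertical, I apply diagram \eqref{qdiag} to both $G$ and $G_0$, obtaining a $\pi$-induced morphism of short exact sequences
$$0\to \mathrm{Coker}(\delta)\to H_2(T_{\omega+1})\to H_2(\hat G)\to 0.$$
The right-hand term maps surjectively: the completion $\hat M=\mathbb{Z}[[t-1]]\twoheadrightarrow\hat N=(\mathbb{Z}_2)^2$ is surjective ($I$-adic completion is exact on finitely generated modules over the Noetherian ring $\mathbb{Z}[t,t^{-1}]$), both $\hat M$ and $\hat N$ are invariant-free (domain argument for $\hat M$, matrix computation for $\hat N$ as in Lemma \ref{lemma6.1}), so $H_2(\hat G)=(\Lambda^2\hat M)_C\twoheadrightarrow (\Lambda^2\hat N)_C=H_2(\hat G_0)$. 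For the $\mathrm{Coker}(\delta)$ term, the map $G_{\sf ab}\otimes H_2(\eta_\omega)(G) \to (G_0)_{\sf ab}\otimes H_2(\eta_\omega)(G_0)$ is a tensor product of surjections---surjectivity of $H_2(\eta_\omega)(G)\twoheadrightarrow H_2(\eta_\omega)(G_0)$ follows by passing to cokernels in the already-established $H_2(G)\twoheadrightarrow H_2(G_0)$ and $H_2(\hat G)\twoheadrightarrow H_2(\hat G_0)$---and this surjectivity descends to the cokernels of $\delta$. The five-lemma then gives surjectivity of the middle map.

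The main technical obstacle is verifying naturality of every piece of diagram \eqref{qdiag} with respect to group homomorphisms, since each constituent map ($\delta$, the projection to $H_2(\hat G)$, the quotient $H_2(\hat G)\twoheadrightarrow H_2(\eta_\omega)$) is constructed from group-theoretic data (free simplicial resolutions and the universal property of Section 2) whose naturality is in principle tautological but requires careful bookkeeping. Apart from this, the proof is essentially the combination of the tower machinery of Section \ref{hzsection} with Theorem \ref{theorem6}. The moral is that although $\mathbb{Z}\wr\mathbb{Z}$ is not tame and hence outside the scope of Theorem \ref{Theorem_omega+1}, it surjects onto a tame group whose length is already large, and $H\mathbb{Z}$-length is forced upward under such surjections when the accompanying $H_2$-comparisons are surjective.
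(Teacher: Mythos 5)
Your argument is correct in substance, but it takes a genuinely different route from the paper. The paper's proof of Theorem \ref{zwrz} is purely functorial: it views $G$ as the value at $A=\mathbb Z$ of the functor $\mathcal F(A)=(A\otimes \mathbb Z[C])\rtimes C$, notes that $H_2(\eta_\omega)(\mathcal F(A))$ is a quadratic functor of $A$ while $H_3(\widehat{\mathcal F(A)})=\Lambda^3(A\otimes\hat M)_C$ is a quotient of a cubic one, and concludes that the natural transformation $\delta$ vanishes identically, so that ${\rm Coker}(\delta)=G_{\sf ab}\otimes H_2(\eta_\omega)(G)$ is uncountable and $H_2(G)\to H_2(T_{\omega+1}G)$ cannot be onto. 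You instead transfer the failure of surjectivity from the tame root group $G_0=\mathbb Z^2\rtimes C$ of Theorem \ref{theorem6} along the surjection induced by $\mathbb Z[C]\epi\mathbb Z[C]/(t+1)^2\cong\mathbb Z^2$ (a correct identification: the module of Lemma \ref{lemma6.1} is cyclic with minimal polynomial $(x+1)^2$). Your approach avoids any computation of $\delta$ for $G$ itself and isolates a transfer principle (non-surjectivity of $H_2\to H_2(T_{\omega+1})$ ascends along surjections once the comparisons on $H_2$, on $H_2$ of the pronilpotent completions, and on ${\rm Coker}(\delta)$ are onto), at the price of depending on Theorem \ref{theorem6}; the paper's approach is self-contained and gives the stronger structural fact $\delta=0$. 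The naturality of \eqref{qdiag} and of the sequence \eqref{delta} in the group, which you flag as the main bookkeeping burden, is exactly the kind of naturality the paper already exploits in \eqref{diav} and \eqref{mapd}, so relying on it is legitimate.

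One local slip should be corrected: the sequence you extract from \eqref{qdiag} is not $0\to{\rm Coker}(\delta)\to H_2(T_{\omega+1}G)\to H_2(\hat G)\to 0$. By \eqref{delta} applied to the stem extension $H_2(\eta_\omega)\mono T_{\omega+1}G\epi\hat G$, the map $H_2(T_{\omega+1}G)\to H_2(\hat G)$ is not onto: its image is $\ker\beta={\rm Im}(H_2(G)\to H_2(\hat G))$ and its cokernel is the (uncountable) group $H_2(\eta_\omega)(G)$, so the correct short exact sequence reads $0\to{\rm Coker}(\delta)\to H_2(T_{\omega+1}G)\to {\rm Im}(H_2(G)\to H_2(\hat G))\to 0$. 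Your four-lemma step survives this correction, since surjectivity on the corrected right-hand terms follows from $H_2(G)\epi H_2(G_0)$ (which you established via $M^C=N^C=0$ and right-exactness of $\Lambda^2$ and of $C$-coinvariants), while the surjectivity $H_2(\hat G)\epi H_2(\hat G_0)$ is still needed exactly where you use it, namely to get $H_2(\eta_\omega)(G)\epi H_2(\eta_\omega)(G_0)$ and hence surjectivity on the ${\rm Coker}(\delta)$ terms. With that adjustment the comparison-square argument is sound and yields $H_2(\eta_{\omega+1})(G)\ne 0$, i.e.\ $H\mathbb Z\text{-}{\sf length}(G)\geq\omega+2$.
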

\begin{proof}
We define a functor from the category ${\sf fAb}$ of finitely
generated free abelian groups to the category of groups as
follows:
$$
\mathcal F: A\mapsto (A\otimes M)\rtimes C,\ A\in {\sf fAb},
$$
where the action of $C$ on $A$ is trivial and $M=\mathbb Z[C]$.
Now our main example can be written as $G=\mathcal F(\mathbb Z)$.
Since the action of $C$ on $A$ is trivial, the pro-nilpotent
completion of $\mathcal F(A)$ can be described as follows:
$$
\widehat{\mathcal F(A)}=(A\otimes \hat M)\rtimes C.
$$
One can easily see that
$$
H_1(C, H_i(A\otimes \hat M))=0, i\geq 1.
$$
Since the abelian group $A\otimes \hat M$ is torsion-free,
$$
H_i(\widehat{\mathcal F(A)})= \Lambda^i(A\otimes \hat M)_C,\ i\geq
1.
$$
Now we have
$$
H_2(\eta_\omega)(\mathcal F(A))={\rm Coker}\{\Lambda^2(A\otimes
M)_C\to \Lambda^2(A\otimes \hat M)_C\}.
$$
Now consider the functor from the category of finitely generated
free abelian groups to all abelian groups
$$
\mathcal G: A\mapsto H_2(\eta_\omega)(\mathcal F(A))
$$
It follows immediately that $\mathcal G$ is a quadratic functor.
Indeed, it is a proper quotient of the functor $A\mapsto A\otimes
A\otimes B$ for a fixed abelian group $B$.

The exact sequence (\ref{delta}) applied to the stem-extension
$$
1\to
H_2(\eta_\omega)(\mathcal F(A))\to T_{\omega+1}(\mathcal F(A))\to
\widehat{\mathcal F(A)}\to 1
$$
can be viewed as an exact sequence of functors in the category
${\sf sAb}$. Consider the map $\delta$ as a natural transformation
in ${\sf sAb}$. The functor $H_3(\widehat{\mathcal
F(A)})=\Lambda^3(A\otimes \hat M)_C$ is a quotient of the cubic
functor
$$
A\otimes A\otimes A\otimes D
$$
for some fixed abelian group $D$ (which equals to the tensor cube
of $\hat M$). Recall that any natural transformation of the form
$$
A\otimes A\otimes A\otimes D\to {\sf quadratic\ functor}
$$
is zero. This follows from the simple observation that $A\otimes
A\otimes A\otimes D$ is a natural epimorphic image of its triple
cross-effect. See, for example, \cite{Mikh} for generalizations
and detailed discussion of this observation.

Therefore, for any non-zero $A$, the map $\delta$ in (\ref{delta})
is zero and $H_2(T_{\omega+1}(\mathcal F(A)))$ contains a subgroup
$H_2(\eta_{\omega})(\mathcal F(A))$ which is uncountable. In
particular, $H_2(T_{\omega+1}(G))=H_2(T_{\omega+1}(\mathcal
F(\mathbb Z))$ is uncountable and hence $H_2(G)\to
H_2(T_{\omega+1}(G))$ is not an epimorphism.
\end{proof}

\begin{Remark}
If we change the group $\mathbb Z\wr \mathbb Z$ by lamplighter
group by adding one more relation, we will obtain another wreath
product (for $n\geq 2$)
$$
\mathbb Z/n\wr \mathbb Z=\langle a,b\ |\ a^n=1, [a,a^{b^i}]=1,\
i\in \mathbb Z\rangle
$$
One can use the scheme of this paper to prove that $H\mathbb
Z\text{-}{\sf length}(\mathbb Z/n\wr \mathbb Z)>\omega+1$.
Essentially it follows from the triviality of $\Lambda^2(\hat
M)^C$ as in Theorem \ref{zwrz}. As it was shown by G. Baumslag
\cite{Baumslag}, there are different ways to embed this group into
a finitely-presented metabelian group. For example, $\mathbb
Z/n\wr Z$ is a subgroup generated by $a,b$ in
$$G_{[n]}:=\langle a,b,c\ |\ a^n=[a,a^b]=[a,a^{b^2}]=[b,c]=1,
a^c=aa^ba^{-b^2}\rangle.$$ Now we have a decomposition
$G_{[n]}=(\mathbb Z/n)^{\oplus \infty}\rtimes (C\times C)$ and it
follows immediately from \cite{IvanovMikhailov} that $H\mathbb
Z\text{-}{\sf length}(G_{[n]})=\omega,$ since
$H_2(\eta_\omega)(G_{[n]})$ is divisible.
\end{Remark}

\section{Concluding remarks}

\begin{Lemma} For any prime $p$ and $n\geq 2$ the embedding $\mathbb Z_p \hookrightarrow \mathbb Q_p$ induces an isomorphism  
$$\Lambda^n \mathbb Z_p \cong \Lambda^n \mathbb Q_p.$$ In particular, $\Lambda^n \mathbb Z_p$ is a $\mathbb Q$-vector space of countable dimension.
\end{Lemma}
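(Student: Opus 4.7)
The plan is to reduce both claims to the single assertion that $\Lambda^n\mathbb{Z}_p$ is already a $\mathbb{Q}$-vector space for $n\geq 2$. Since $\Lambda^n$ commutes with localisation, one has $\Lambda^n\mathbb{Q}_p = (\Lambda^n\mathbb{Z}_p)\otimes_{\mathbb{Z}}\mathbb{Z}[1/p]$, and the natural map in question is the canonical map to the localisation; it is therefore an isomorphism precisely when multiplication by $p$ acts bijectively on $\Lambda^n\mathbb{Z}_p$. Injectivity of multiplication by $p$ will follow from torsion-freeness of $\Lambda^n\mathbb{Z}_p$: writing $\mathbb{Z}_p$ as the filtered union of its finitely generated subgroups $F_\alpha$ (each free abelian), I get $\Lambda^n\mathbb{Z}_p = \varinjlim \Lambda^n F_\alpha$, a filtered colimit of free abelian groups, hence torsion-free.

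The heart of the argument is the $p$-divisibility of $\Lambda^n\mathbb{Z}_p$ for $n\geq 2$. I plan to show that an arbitrary generator $x_1\wedge\dots\wedge x_n$ (with all $x_i\neq 0$) is $p$-divisible. After reordering, take $x_1$ of minimal $p$-adic valuation among the $x_i$, so each $x_i = \lambda_i x_1$ with $\lambda_i\in\mathbb{Z}_p$. Approximate $\lambda_i = \lambda_{i,N} + p^{N+1}\mu_{i,N}$ with $\lambda_{i,N}\in\mathbb{Z}$ and $\mu_{i,N}\in\mathbb{Z}_p$, and distribute the wedge
$$x_1\wedge\dots\wedge x_n = x_1\wedge(\lambda_{2,N}x_1+p^{N+1}\mu_{2,N}x_1)\wedge\dots\wedge(\lambda_{n,N}x_1+p^{N+1}\mu_{n,N}x_1).$$
Any term containing at least one factor of the form $\lambda_{i,N}x_1$ vanishes: the integer $\lambda_{i,N}$ pulls out of the wedge, leaving two copies of $x_1$. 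The unique surviving term carries the prefactor $p^{(n-1)(N+1)}$, and since $n\geq 2$ this gives $x_1\wedge\dots\wedge x_n\in p^{N+1}\Lambda^n\mathbb{Z}_p$ for every $N\geq 0$.

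Combining torsion-freeness with $p$-divisibility, multiplication by $p$ is bijective on $\Lambda^n\mathbb{Z}_p$, yielding the isomorphism $\Lambda^n\mathbb{Z}_p\cong\Lambda^n\mathbb{Q}_p$. For the $\mathbb{Q}$-vector space assertion, the corresponding divisibility by any prime $\ell\neq p$ is automatic: $\ell$ is a unit in $\mathbb{Z}_p$, so multiplication by $\ell$ is bijective on $\mathbb{Z}_p$, hence functorially on $\Lambda^n\mathbb{Z}_p$. Therefore $\Lambda^n\mathbb{Z}_p$ is divisible and torsion-free, i.e.\ a $\mathbb{Q}$-vector space. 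The main technical obstacle is the $p$-divisibility computation above; once one spots the trick of approximating the $p$-adic ratios $\lambda_i$ by integers and exploiting the vanishing of any wedge with a repeated factor, the rest of the proof is purely formal.
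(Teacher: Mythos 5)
Your proof is correct, and it shares the paper's skeleton: both arguments reduce the lemma to showing that multiplication by $p$ is injective and surjective on $\Lambda^n \mathbb Z_p$, and both prove divisibility by approximating $p$-adic data by integers and using that a wedge whose entries lie, up to integer scalars, in a common rank-one subgroup vanishes. The implementations differ, though. For the reduction, the paper embeds $\Lambda^n\mathbb Z_p$ into $\mathbb Z_p^{\otimes n}\subseteq \mathbb Q_p^{\otimes n}$ to get injectivity of $\Lambda^n\mathbb Z_p\to \Lambda^n\mathbb Q_p$ and then writes $\Lambda^n\mathbb Q_p=\bigcup \frac{1}{p^i}\Lambda^n\mathbb Z_p$; you instead identify the map with the localization $\Lambda^n\mathbb Z_p\to(\Lambda^n\mathbb Z_p)\otimes\mathbb Z[1/p]$ (this deserves the one-line remark that for $\mathbb Z[1/p]$-modules the exterior powers over $\mathbb Z$ and over $\mathbb Z[1/p]$ coincide, plus base change) and obtain torsion-freeness from $\Lambda^n\mathbb Z_p=\varinjlim \Lambda^n F_\alpha$ over finitely generated free subgroups; both routes are sound. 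For $p$-divisibility, the paper splits each entry as $a=a^{(0)}+a^{(1)}$ with $a^{(0)}\in\{0,\dots,p-1\}$ and $a^{(1)}\in p\mathbb Z_p$, so the mixed terms are visibly $p$-divisible and the all-integer term dies since its entries are multiples of $1$; you first divide by the minimal-valuation entry, using the valuation/DVR structure of $\mathbb Z_p$, and approximate the ratios modulo $p^{N+1}$, so every term containing an integral ratio vanishes by repetition of $x_1$ and the surviving term carries $p^{(n-1)(N+1)}$ — this yields divisibility by arbitrarily high powers of $p$ in one pass, while the paper's splitting is marginally more elementary. Two small glosses: for $\ell\ne p$, functoriality gives bijectivity of multiplication by $\ell^n$ (not $\ell$) on $\Lambda^n\mathbb Z_p$, from which bijectivity of $\ell$ follows at once; and, like the paper's proof, yours does not address the parenthetical ``countable dimension'' clause (what the paper actually uses later is only that $\Lambda^n\mathbb Z_p$ is a nonzero $\mathbb Q$-vector space, hence uncountable; its dimension is in fact that of the continuum).
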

\begin{proof}
Since $\mathbb Z_p$ and $\mathbb Q_p$ are torsion free, the map $\mathbb Z_p^{\otimes n}\to \mathbb Q_p^{\otimes n} $ is a monomorphism. Since for torsion free groups the exterior power is embedded into the tensor power, we obtain that the map 
$\Lambda^n \mathbb Z_p \to \Lambda^n \mathbb Q_p$ is a monomorphism. So we can identify $\Lambda^n \mathbb Z_p$ with its image in $\Lambda^n \mathbb Q_p.$ Since $\mathbb Q_p=\bigcup \frac{1}{p^i} \cdot \mathbb Z_p,$ we obtain $\Lambda^n \mathbb Q_p=\bigcup \frac{1}{p^i} \cdot \Lambda^n  \mathbb Z_p.$ Then it is enough to prove that $\Lambda^n \mathbb Z_p$ is $p$-divisible. For any $a\in \mathbb Z_p$ we consider the  decomposition $a=a^{(0)}+a^{(1)},$ where $a^{(0)}\in \{0,\dots , p-1\}$ and $a^{(1)}=p\cdot b$ for some $b\in \mathbb Z_p.$  Then for any $a_1,\dots, a_n\in \mathbb Z_p$ we have $a_1 \wedge \dots \wedge a_n =\sum a_1^{(i_1)} \wedge \dots \wedge a_n^{(i_n)},$ where $(i_1,\dots ,i_n)$ runs over $\{0,1\}^n.$ For any sequence $(i_1,\dots,i_n)$ except $(0,\dots,0)$ the element $a_1^{(i_1)} \wedge \dots \wedge a_n^{(i_n)}$ is $p$ divisible because $a_i^{(1)}$ is $p$-divisible. Since $\Lambda^n \mathbb Z=0,$ we get $a_1^{(0)} \wedge \dots \wedge a_n^{(0)}=0.$ It follows that  $a_1 \wedge \dots \wedge a_n$ is $p$-divisible.  Thus $\Lambda^n \mathbb Z_p$ is $p$-divisible. 
\end{proof}

Remind that the Klein bottle group is given by $G_{Kl}=\mathbb Z
\rtimes C,$ where $C$ acts on $\mathbb Z$ by the multiplication on
$-1.$ Its pronilpotent completion is equal to $\mathbb Z_2 \rtimes C.$ Consider the map $w:\mathbb Z_2\times \mathbb Z_2 \to
\Lambda^2 \mathbb Z_2$ given by $w(a,b)=\frac{1}{2} \  a\wedge b.$ Here we use that $\Lambda^2 \mathbb Z_p$ is a $\mathbb Q$-vector space.  It is easy to
see that $w$ is a 2-cocycle and we get the corresponding central
extension
$$ \Lambda^2 \mathbb Z_2 \mono N_w \epi \mathbb Z_2  ,$$ whose underlying set is $(\Lambda^2 \mathbb Z_2) \times \mathbb Z_2$ and the product is given by
$$(\alpha,a)(\beta,b)=\left(\alpha+\beta+ \frac{1}{2} \cdot a\wedge b ,\  a+b\right).$$
We define the action of $C$ on $N_w$ by the map $(\alpha,a)\mapsto
(\alpha,-a).$

\begin{Proposition}\label{Proposition_localisation_of_Klein_bottle} There is an isomorphism  $$EG_{Kl}=N_w\rtimes C.$$ In other words, $EG_{Kl}$ can be described as the set $(\Lambda^2 \mathbb Z_2)
\times \mathbb Z_2 \times C$ with the multiplication given by
$$(\alpha,a,t^i)(\beta,b,t^j)=\left(\alpha+\beta+\frac{(-1)^i}{2}\cdot a\wedge b,\
a+(-1)^i b,\  t^{i+j}\right).$$
\end{Proposition}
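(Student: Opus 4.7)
The plan is to identify $EG_{Kl}$ with $N_w \rtimes C$ by realizing the latter as the universal relative central extension of $\hat G_{Kl}$ with respect to $\eta_\omega \colon G_{Kl} \to \hat G_{Kl}$. First apply Theorem~\ref{Theorem_omega+1}(2) to $G_{Kl} = \mathbb Z \rtimes C$: the minimal polynomial is $\mu_M = x+1$ with unique eigenvalue $\lambda_1 = -1$ of multiplicity $m_1 = 1$; the only relation $\lambda_i \lambda_j = 1$ is $\lambda_1^2 = 1$, and $m_1 = 1$ fits case (2). Hence $EG_{Kl} = T_{\omega+1}(G_{Kl})$ fits into the central extension
\[
1 \to H_2(\eta_\omega)(G_{Kl}) \to EG_{Kl} \to \hat G_{Kl} \to 1.
\]
Since $H_2(G_{Kl}) = 0$ (the Klein bottle is an aspherical non-orientable surface), $H_2(\eta_\omega)(G_{Kl}) = H_2(\hat G_{Kl})$, and the Hochschild--Serre spectral sequence for $\mathbb Z_2 \to \hat G_{Kl} \to C$ gives $H_2(\hat G_{Kl}) = (\Lambda^2 \mathbb Z_2)_C = \Lambda^2 \mathbb Z_2$: the $C$-action on $\Lambda^2 \mathbb Z_2$ is trivial because $(-a) \wedge (-b) = a \wedge b$, while $(\mathbb Z_2)^C = 0$ and $H_{\geq 2}(C) = 0$ kill the other entries.

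Next verify the structure on $N_w \rtimes C$. The map $w$ is a $2$-cocycle on $\mathbb Z_2$ by bilinearity of $\wedge$, and the $C$-action by negation on $\mathbb Z_2$ extends to $N_w$ since $w(-a, -b) = w(a, b)$; thus $N_w \rtimes C$ is a central extension of $\hat G_{Kl} = \mathbb Z_2 \rtimes C$ by $\Lambda^2 \mathbb Z_2$. It is $H\mathbb Z$-local because $\hat G_{Kl}$ is $H\mathbb Z$-local as the pronilpotent completion of a finitely generated group, and $\Lambda^2 \mathbb Z_2$ is $H\mathbb Z$-local as a $\mathbb Q$-vector space by the preceding Lemma. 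Define $\eta \colon G_{Kl} \to N_w \rtimes C$ by $\eta(a) = (0, 1, 1)$ and $\eta(t) = (0, 0, t)$; a direct computation with the given multiplication confirms $\eta(t)\eta(a)\eta(t)^{-1} = \eta(a^{-1})$, so $\eta$ is a homomorphism lifting $\eta_\omega$ and makes $N_w \rtimes C$ into a relative central extension with respect to $\eta_\omega$.

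Finally, by Proposition~\ref{proposition_univ_central} there is a unique morphism $\Theta \colon EG_{Kl} \to N_w \rtimes C$ of relative central extensions, and $\Theta$ covers the identity on $\hat G_{Kl}$. The 5-lemma then reduces the isomorphism claim to showing that the induced endomorphism $\varphi$ of $\Lambda^2 \mathbb Z_2$ is an isomorphism. By Remark~\ref{remark_univ_central}, $\varphi$ is the image of the relative cocycle class of $N_w \rtimes C$ under the universal coefficient isomorphism $H^2(\eta_\omega, \Lambda^2 \mathbb Z_2) \cong \mathrm{End}(\Lambda^2 \mathbb Z_2)$. Reading off the cocycle from the multiplication gives $c((a, t^i), (b, t^j)) = \tfrac{(-1)^i}{2}\, a \wedge b$, and its pullback $\eta_\omega^* c$ vanishes identically (the image of $\mathbb Z$ in $\mathbb Z_2$ is cyclic, so $a \wedge b = 0$ for $a, b$ in it), so the normalised $1$-cochain $\alpha = 0$ produces a genuine relative cocycle $(c, 0)$. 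Pairing with the class $a \wedge b \in H_2(\hat G_{Kl})$ represented by $[a\,|\,b] - [b\,|\,a]$ in the bar resolution yields
\[
c((a,1),(b,1)) - c((b,1),(a,1)) = \tfrac{1}{2}(a \wedge b - b \wedge a) = a \wedge b,
\]
so $\varphi = \mathrm{id}$ and $\Theta$ is an isomorphism. The main technical obstacle is carrying out this pairing calculation carefully in the relative framework of Section~2; the rest of the argument is formal.
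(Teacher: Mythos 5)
Your proposal is correct and follows essentially the same route as the paper: reduce to $EG_{Kl}=T_{\omega+1}(G_{Kl})$ via Theorem \ref{Theorem_omega+1}, identify $H_2(\eta_\omega)\cong H_2(\hat G_{Kl})\cong\Lambda^2\mathbb Z_2$ using $H_2(G_{Kl})=0$ and the spectral sequence, and show that the class of $N_w\rtimes C$ corresponds to the identity under the universal-coefficient evaluation, the key computation being $w(a,b)-w(b,a)=a\wedge b$. The only difference is packaging: you exhibit the explicit relative cocycle $(c,0)$ and the lift $\eta$, then invoke the universal property of $EG_{Kl}$ and the five lemma, whereas the paper compares the two extension classes in $H^2(\hat G_{Kl},\Lambda^2\mathbb Z_2)$ after pulling back to $\mathbb Z_2$, using that the comparison maps are isomorphisms for divisible coefficients.
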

\begin{proof} Set $G:=G_{Kl}.$ Then $\hat G=\mathbb Z_2 \rtimes C.$ The Lyndon-Hochschild-Serre spectral sequences imply that $H_2(G)=0$ and the
 map $\Lambda^2 \mathbb Z_2=H_2(\mathbb Z_2)\xrightarrow{\cong} H_2(\hat G)$ is an isomorphism.
Theorem \ref{Theorem_omega+1} implies that $T_{\omega+1}G=EG.$
 Then $EG$ is the universal relative central extension $(H_2(\eta_{\omega})\mono EG \epi \hat G,\eta_{\omega+1} ).$ Since
 $H_2(\eta_\omega)={\rm Coker}\{H_2(G)\to H_2(\hat G)\}$  and $H_2(G)=0,$ we obtain that $H_2(\mathbb Z_2) \xrightarrow{\cong}
 H_2(\hat G)\xrightarrow{\cong} H_2(\eta_\omega)$ are isomorphisms. The continious maps $B\mathbb Z_2 \to B\hat G \to {\sf Cone}(B\eta_\omega)$
 give the commutative diagram for any abelian group $A:$
\begin{equation}\label{diag_proof_Klein}
\xyma{ H^2(\eta_\omega,A)\ar@{->}[r]^\cong \ar@{->}[d] & {\sf
Hom}(H_2(\eta_\omega),A) \ar@{->}[d]^\cong
\\
H^2(\hat G,A) \ar@{->}[r]\ar@{->}[d] & {\sf Hom}(H_2(\hat G),A)
\ar@{->}[d]^\cong
\\
H^2(\mathbb Z_2,A)\ar@{->}[r]  & {\sf Hom}(H_2(\mathbb Z_2),A) }
\end{equation}
If $A$ is a divisible abelian group, then ${\sf Ext}(H_1(\hat
G),A)=0={\sf Ext}(\mathbb Z_2,A),$ and all morphisms in the
diagram \eqref{diag_proof_Klein} are isomorphisms. In particular,
if $A=H_2(\eta_\omega)=H_2(\hat G)=\Lambda^2 \mathbb Z_2$, then
the morphisms induce isomorphisms $$H^2(\eta_\omega,\Lambda^2
\mathbb Z_2)\cong H^2(\hat G,\Lambda^2 \mathbb Z_2) \cong
H^2(\mathbb Z_2,\Lambda^2 \mathbb Z_2) \cong {\sf Hom}(\Lambda^2
\mathbb Z_2,\Lambda^2 \mathbb Z_2)$$ and the extension $\Lambda^2
\mathbb Z_2\mono EG\epi \hat G$ corresponds to the identity map in
${\sf Hom}(\Lambda^2 \mathbb Z_2,\Lambda^2 \mathbb Z_2).$
Therefore, it is sufficient to prove that the extension $\Lambda^2
\mathbb Z_2 \mono N_w\rtimes C\epi \hat G$ goes to the identity
map via the composition $$H^2(\hat G,\Lambda^2 \mathbb Z_2)\to
H^2(\mathbb Z_2,\Lambda^2 \mathbb Z_2) \to {\sf Hom}(\Lambda^2
\mathbb Z_2,\Lambda^2 \mathbb Z_2).$$ The map $H^2(\hat
G,\Lambda^2 \mathbb Z_2)\to H^2(\mathbb Z_2,\Lambda^2 \mathbb
Z_2)$ on the level of extensions is given by the pullback. It
follows that the extension $\Lambda^2 \mathbb Z_2 \mono N_w\rtimes
C\epi \hat G$ goes to the extension $\Lambda^2 \mathbb Z_2 \mono
N_w\epi \mathbb Z_2.$ Then we need to prove that $w$ goes to the
identity map under the map $H^2(\mathbb Z_2,\Lambda^2 \mathbb
Z_2)\to {\sf Hom}(\Lambda^2 \mathbb Z_2, \Lambda^2 \mathbb Z_2).$

For any group $H$ and an abelian group $A$ the map $H^2(H,A)\to
{\sf Hom}(H_2(H),A)$ on the level of cocycles is induced  by the
evaluation map $C^2(H,A) \to {\sf Hom}(C_2(H),A)$ given by
$u\mapsto ((h, h') \mapsto u(h,h'))$. For an abelian group $H$ the
map $\Lambda^2 H \to H_2(H)$ given by $h \wedge  h'\mapsto (h,h')-(h',h)+B_2(H)$ is an
isomorphism $\Lambda^2 H\cong  H_2(H)$ (see 
\cite[Ch.V \S 5-6]{Brown}). Then the map $H^2(H,A)\to
{\sf Hom}(\Lambda^2 H,A)$ is given by $u\mapsto (h\wedge h'
\mapsto u(h,h')-u(h',h)).$ Since $w(a,b)-w(b,a)=\frac{1}{2} (a\wedge b-b\wedge a)=a\wedge b,$ $w$ goes to the identity map.
\end{proof}

\begin{Remark}
If we identify $EG_{Kl}$ with the set $(\Lambda^2 \mathbb Z_2
)\times \mathbb Z_2 \times C,$ it is easy to check that
$\gamma_2(EG_{Kl})=(\Lambda^2 \mathbb Z_2 )\times 2 \mathbb Z_2
\times 1$ and $[\gamma_2(EG_{Kl}),\gamma_2(EG_{Kl})]=(\Lambda^2
\mathbb Z_2 )\times 0 \times 1.$ It follows that $EG_{Kl}$ is a
solvable group of class 3 but it is    not metabelian. In
particular, the class of metabelian groups is not closed under the
$H\mathbb Z$-localization.
\end{Remark}

Finishing this paper, we mention some possibilities of
generalization of the obtained results. We conjecture that, the
tame $\mathbb Z[C]$-module $M$ defined by the $k\times k$-matrix
($k\geq 2$)
$$
\begin{pmatrix}
-1 & 1 & 0 & \dots & 0\\
 0 & -1 & 1 & 0 & \dots\\
 \ldots\\
 0 & \dots & 0 & 0 & -1
\end{pmatrix}
$$
defines the group $M\rtimes C$ with $H\mathbb Z$-length
$\omega+l(k),$ where $l(k)\in \mathbb N$ and $l(k)\to \infty$ for
$k\to \infty$. With the help of this group, one can try to use the
same scheme as in this paper to prove that $H\mathbb Z$-length of
a free non-cyclic group is $\geq 2\omega$.

\section*{Acknowledgement}
The research is supported by the Russian Science Foundation grant
N 16-11-10073.

\end{document}